\documentclass[namedate,webpdf,imanum]{ima-authoring-template}

\usepackage{amsfonts}
\usepackage{amssymb}
\usepackage{graphicx}
\usepackage{epstopdf}
\usepackage{verbatim}
\usepackage{mathtools}
\usepackage{algpseudocode}
\usepackage[nameinlink]{cleveref}
\usepackage{multirow}
\usepackage{tikz}
\usetikzlibrary{calc, shapes, angles, quotes, patterns, decorations.pathreplacing, cd}
\makeatletter
\renewcommand*\l@subsubsection[2]{{}{}{}} %
\makeatother
\usepackage{subcaption}
\usepackage{bm}
\usepackage{amsopn}
\usepackage{array}
\usepackage{doi}

\newif\ifarxiv
\arxivfalse
\arxivtrue

\theoremstyle{thmstyletwo}%
\newtheorem{theorem}{Theorem}[section]
\newtheorem{corollary}[theorem]{Corollary}%
\newtheorem{lemma}[theorem]{Lemma}%

\newtheorem{example}{Example}[section]%
\newtheorem{remark}{Remark}[section]%

\numberwithin{equation}{section}

\newcommand{\vecbd}[1]{ \boldsymbol{#1} }
\newcommand{\unitvec}[1]{\vecbd{#1}}
\newcommand{\vertex}[1]{\vecbd{{#1}}}
\newcommand{\tensorbd}[1]{\mathbf{{#1}}}
\newcommand{\symgrad}[1]{\boldsymbol{\epsilon}{(#1)}}

\newcommand{\newinf}{\mathop{\inf\vphantom{\sup}}}

\DeclareMathOperator{\grad}{\mathbf{grad}}
\DeclareMathOperator{\curl}{\mathbf{curl}}
\let\div\undefined
\DeclareMathOperator{\div}{div}
\DeclareMathOperator{\rot}{rot}

\renewcommand{\d}[1]{\,\mathrm{d}{#1}}
\newcommand{\dee}{\mathrm{d}}
\newcommand{\dual}[1]{{#1}'}
\newcommand{\discrete}[1]{\mathbb{#1}}
\newcommand{\operator}[1]{\mathcal{#1}}

\newcommand{\hcurl}{\vecbd{H}(\curl; \Omega)}
\newcommand{\hdiv}{\vecbd{H}(\div; \Omega)}
\newcommand{\harmonic}[1]{\mathfrak{#1}}

\renewcommand{\cite}{\citep}

\crefname{theorem}{Theorem}{Theorems}
\crefname{lemma}{Lemma}{Lemmas}
\crefname{corollary}{Corollary}{Corollaries}

\crefname{section}{Section}{Sections}
\crefname{subsection}{Subsection}{Subsections}
\crefname{appendix}{Appendix}{Appendices}
\crefname{algorithm}{Algorithm}{Algorithms}

\crefname{table}{Table}{Tables}
\crefname{figure}{Figure}{Figures}
\Crefname{figure}{Figure}{Figures}

\crefformat{equation}{\textup{#2(#1)#3}}
\crefrangeformat{equation}{\textup{#3(#1)#4--#5(#2)#6}}
\crefmultiformat{equation}{\textup{#2(#1)#3}}{ and \textup{#2(#1)#3}}
{, \textup{#2(#1)#3}}{, and \textup{#2(#1)#3}}
\crefrangemultiformat{equation}{\textup{#3(#1)#4--#5(#2)#6}}%
{ and \textup{#3(#1)#4--#5(#2)#6}}{, \textup{#3(#1)#4--#5(#2)#6}}{, and \textup{#3(#1)#4--#5(#2)#6}}

\Crefformat{equation}{#2Equation~\textup{(#1)}#3}
\Crefrangeformat{equation}{Equations~\textup{#3(#1)#4--#5(#2)#6}}
\Crefmultiformat{equation}{Equations~\textup{#2(#1)#3}}{ and \textup{#2(#1)#3}}
{, \textup{#2(#1)#3}}{, and \textup{#2(#1)#3}}
\Crefrangemultiformat{equation}{Equations~\textup{#3(#1)#4--#5(#2)#6}}%
{ and \textup{#3(#1)#4--#5(#2)#6}}{, \textup{#3(#1)#4--#5(#2)#6}}{, and \textup{#3(#1)#4--#5(#2)#6}}

\hypersetup{
  colorlinks = true,
  allcolors = green!50!black,
  urlcolor = red!50!black,
}

\ifarxiv
    \makeatletter
   \def\ps@opening
  {%
      \def\@oddfoot{{%
            \hbox to \textwidth{\parbox{\textwidth}{\hspace*{1pt}\vspace*{20pt}\newline%
            \fontsize{10bp}{8}\fontshape{n}\selectfont \centering \@copyrightstatement%
              }}%
            }}%
      \def\@evenfoot{{%
            \hbox to \textwidth{\parbox{\textwidth}{\hspace*{1pt}\vspace*{20pt}\newline%
            \fontsize{10bp}{8}\fontshape{n}\selectfont \centering \@copyrightstatement%
              }}%
            }}%
    }
    \let\@evenhead\relax
    \let\@oddhead\relax
    \makeatother
\fi

\begin{document}

\DOI{DOI HERE}
\copyrightyear{2026}
\vol{00}
\pubyear{2026}
\access{Advance Access Publication Date: Day Month Year}
\appnotes{Paper}
\copyrightstatement{Published by Oxford University Press on behalf of the 
Institute of Mathematics and its Applications. All rights reserved.}

\ifarxiv
    \copyrightstatement{Distribution Statement A.  Approved for public release: distribution is unlimited.}
\fi


\title[Iterated penalty for structure-preserving discretizations]{%
On the convergence of iterated penalty methods for 
structure-preserving discretizations of saddle point problems}

\author{Patrick E.~Farrell\ORCID{0000-0002-1241-7060}
\address{\orgdiv{Mathematical Institute}, 
         \orgname{University of Oxford}, 
         \orgaddress{\street{Radcliffe Observatory, Andrew Wiles Building, 
            Woodstock Rd}, 
         \postcode{OX2 6GG}, 
         \state{Oxford}, 
         \country{UK}}
\\
\orgdiv{Mathematical Institute},
         \orgname{Charles University},
         \orgaddress{\street{Sokolovská 49/83},
         \postcode{18675},
         \state{Prague},
         \country{Czechia}}}}

\author{Michael Neilan\ORCID{0000-0002-1564-1041}
\address{\orgdiv{Department of Mathematics}, 
         \orgname{University of Pittsburgh}, 
         \orgaddress{\street{301 Thackeray Hall}, 
         \postcode{15260}, 
         \state{Pennsylvania}, 
         \country{USA}}}}

\author{Charles Parker*\ORCID{0000-0003-0767-5732}
\address{\orgdiv{Acoustics Division}, 
         \orgname{U.S. Naval Research Laboratory}, 
         \orgaddress{\street{4555 Overlook Ave SW}, 
         \postcode{20375}, 
         \state{Washington, D.C.}, 
         \country{USA}}}}

\author{L. Ridgway Scott\ORCID{0000-0002-7885-7106}
\address{\orgdiv{Department of Computer Science}, 
         \orgname{University of Chicago}, 
         \orgaddress{\street{5730 S Ellis Ave}, 
         \postcode{60637}, 
         \state{Illinois}, 
         \country{USA}}}}

\authormark{P.~E.~Farrell, M.~Neilan, C.~Parker, and L.~R.~Scott}

\corresp[*]{Corresponding author: 
    \href{email:charles.w.parker185.ctr@us.navy.mil}{%
        charles.w.parker185.ctr@us.navy.mil}}

\received{Date}{0}{Year}
\revised{Date}{0}{Year}
\accepted{Date}{0}{Year}


\abstract{We present new convergence estimates for the iterated penalty method 
		applied to structure-preserving discretizations of 
		linear generalized saddle point systems. The method may 
		be viewed as an Uzawa iteration on an augmented Lagrangian formulation 
		of the system. As a by-product, we obtain sharper stability estimates 
		for penalized/perturbed saddle point problems. Three model
		finite element applications show agreement with the theory.}
\keywords{iterated penalty method; augmented Lagrangian method; 
		saddle point problem.}

\maketitle



\section{Introduction}

We consider the numerical solution of generalized linear saddle point 
problems of the form: Find $u \in V$ and $p \in Q$ such that
\begin{subequations}
    \label{eqn:saddle}
    \begin{alignat}{2}
        a(u, v) + (\operator{D} v, p)_{Q} &= F(v) \qquad & &\forall v \in 
        V, \\
        (\operator{D} u, q)_{Q} &= G(q) \qquad & &\forall q \in Q.
    \end{alignat}	
\end{subequations}
Here, $V$ and $Q$ are real Hilbert spaces with inner-products 
$(\cdot,\cdot)_{\circ}$, norms $\|\cdot\|_{\circ}$, 
and dual spaces $\dual{\circ}$ for $\circ \in \{V, Q\}$; 
$a : V \times V \to \mathbb{R}$ is a continuous bilinear form; 
$\operator{D} : V \to Q$ is a surjective, bounded, linear operator; and
$F \in \dual{V}$ and $G \in \dual{Q}$. Problems of the form 
\cref{eqn:saddle} arise in a variety of applications including 
incompressible flow, elasticity, liquid crystals, electromagnetism, and 
constrained optimization 
(see e.g.~\cite{BenziGolubLiesen05,BoffiBrezziFortin13}).  	
The well-posedness of \cref{eqn:saddle} is given by classical 
Babu\v{s}ka--Brezzi theory
\cite{Babuska71,Brezzi74,Necas62}.

We discretize \cref{eqn:saddle} with conforming finite dimensional 
spaces $\discrete{V} \subset V$ and $\discrete{Q} \subset Q$: 
Find $u_X \in \discrete{V}$ and $p_X \in \discrete{Q}$ such that
\begin{subequations}
    \label{eqn:saddle-discrete}
    \begin{alignat}{2}
        \label{eqn:saddle-discrete-1}
        a(u_X, v) + (\operator{D} v, p_X)_{Q} &= F(v) \qquad & &\forall v 
        \in 
        \discrete{V}, \\
        \label{eqn:saddle-discrete-2}
        (\operator{D} u_X, q)_{Q} &= G(q) \qquad & &\forall q \in 
        \discrete{Q}.
    \end{alignat}	
\end{subequations}
We assume that \cref{eqn:saddle-discrete} is well posed and
that the discrete spaces are \emph{structure-preserving}
in the sense that $\operator{D} \discrete{V} = \discrete{Q}$.
Structure-preserving schemes offer a number of advantages. 
As we will see in \cref{sec:problem-setup} below, 
a discrete inf-sup condition necessary for the well-posedness
of \cref{eqn:saddle-discrete} automatically holds. Additionally,
one obtains improved error estimates 
\cite[Theorem 5.2.4]{BoffiBrezziFortin13},
\cite[Lemma 50.2 and Corollary 50.5]{ErnGuermondII21},
robustness properties \cite[Remark 50.8]{ErnGuermondII21}
and more accurate representation of the underlying physics of the problem;
see e.g. \cite{JohnLinkeMerdonNeilanRebholz17} for the case of 
incompressible flow. However, characterizing and constructing a basis for 
$\discrete{Q} = \operator{D} \discrete{V}$ is typically not a simple task,
and in many cases is an open problem.

One solution method that avoids the need for an explicit basis of 
$\operator{D} \discrete{V}$ is a simple Uzawa iteration 
\cite{ArrowHurwiczUzawa58} applied to an augmented Lagrangian formulation 
of \cref{eqn:saddle-discrete}, defined precisely in 
\cref{eqn:iter-penalty-2param} below. While the use of augmented 
Lagrangian methods dates back to Hestenes \cite{Hestenes69} and 
Powell \cite{Powell69} in the nonlinear optimization literature, 
the first use of these techniques with finite element methods 
appears to be Fortin \& Glowinski \cite{FortinGlow83}
for incompressible flow problems ($G = 0$). 
In the context of structure-preserving finite element methods
(again for incompressible flow problems with $G = 0$), 
Scott \& Bagheri \cite{ScottBagheri90} appear to be the first
to note that an explicit basis for $\operator{D} \discrete{V}$
can be avoided with an Uzawa iteration applied to 
an augmented Lagrangian formulation; the iteration scheme was 
dubbed ``the iterated penalty method''. The method and its analysis were 
extended to general saddle point problems in the first edition of 
\cite{BrennerScott08} and to the case that $G(q) = (\operator{D} v, q)_{Q}$ 
in \cite{ScottIlinMetcalfeBagheri96}. In the context of 
incompressible flow problems, variants of the iterated penalty 
method for high-order discretizations have 
been developed in \cite{AinsworthParker23scip} and for 
inhomogeneous boundary conditions in 
\cite{EickmannScottTscherpel25,ScottIlinMetcalfeBagheri96}. 

Our main contributions are novel convergence estimates for the iterated 
penalty method and an implementation scheme for the general case $G \neq 0$. 
We show that under suitable assumptions on the penalty 
parameters, depending on the structure of the bilinear form 
$a(\cdot,\cdot)$, the iterated penalty method converges at a geometric 
rate (\cref{thm:iter-penalty-2param-error} below). 
All of our estimates are explicit in the typical stability parameters
associated with the well-posedness of the discrete
saddle point system \cref{eqn:saddle-discrete}.
As a by-product, we also obtain sharper stability estimates for a 
penalized/perturbed formulation of \cref{eqn:saddle-discrete} 
(\cref{thm:saddle-discrete-continuity} below) than 
those available in the literature. 

A detailed comparison with the current literature is given in 
\cref{sec:previous-work} below, so we provide a brief overview here.
The case $a(\cdot,\cdot)$ is symmetric positive definite (SPD) 
can be found in \cite{FortinGlow83}, $a(\cdot,\cdot)$ is nonnegative 
and SPD on the kernel of $\operator{D}$ in \cite{BrennerScott08},
and $a(\cdot,\cdot)$ coercive on the kernel of $\operator{D}$ in 
\cite{AwanouLai05,HuangDaiOrbanSaunders24}. Here, we only assume that 
$a(\cdot,\cdot)$ satisfies an inf-sup condition on the kernel of 
$\operator{D}$, which appears to be novel. 
We also show that if $a(\cdot,\cdot)$ is positive definite,
or nonnegative and SPD on the kernel of $\operator{D}$, then the
one-parameter variant of the iterated penalty method converges 
at a geometric rate for any choice of penalty parameter, another result that 
appears to be new.

The remainder of the paper is organized as follows. In 
\cref{sec:problem-setup}, we provide a more precise problem setup 
with three motivating examples. Stability estimates for perturbed 
saddle point systems appear in \cref{sec:penalty-method}. These 
estimates are then used to prove new convergence estimates for 
the iterated penalty method in \cref{sec:iterated-penalty}, which 
also details the implementation of the method. Numerical 
results for the three motivating examples appear in 
\cref{sec:numerics}. In the case that $a(\cdot,\cdot)$ is SPD,
we perform a more refined convergence analysis in 
\cref{sec:refined-analysis-aspd}. Finally, a more thorough comparison 
to existing works appears in \cref{sec:previous-work}. 

\section{Problem setup}
\label{sec:problem-setup}

Standard theory (e.g.~\cite[Theorem 4.2.3]{BoffiBrezziFortin13}) 
shows that problem \cref{eqn:saddle} is 
well-posed if and only if there exists $\alpha > 0$ such that
\begin{align}
    \label{eqn:a-inf-sup-kernel}
    \alpha \leq 
    \newinf_{u \in Z} \sup_{v \in Z} \frac{a(u, v)}{\|u\|_{V} \|v\|_{V}} 
    \quad \text{and} \quad \alpha \leq \newinf_{v \in Z} \sup_{u \in Z} 
    \frac{a(u, v)}{\|u\|_{V} \|v\|_{V}},
\end{align}
where $Z := \{ v \in V : (\operator{D} v, q)_Q = 0 \ \forall q \in Q \}$ 
denotes the kernel of $\operator{D}$,
and $\operator{D} : V \to Q$ is surjective: there exists $\beta > 0$ 
such that
\begin{align}
    \label{eqn:b-inf-sup}
    \beta \leq \newinf_{v \in V} \sup_{q \in Q} \frac{(\operator{D} v, 
        q)_Q}{ 
        \|v\|_V \|q\|_Q }.
\end{align}
Analogous to the continuous case, problem \cref{eqn:saddle-discrete} is well-posed if 
and only if 
\begin{align}
    \label{eqn:a-infsup-kernel-discrete}
    \alpha_X := 
    \newinf_{u \in \discrete{Z}} \sup_{v \in \discrete{Z}} \frac{a(u, 
        v)}{\|u\|_{V} \|v\|_{V}} = \newinf_{v \in \discrete{Z}} \sup_{u \in 
        \discrete{Z}} \frac{a(u, v)}{\|u\|_{V} \|v\|_{V}} > 0,
\end{align}
where $\discrete{Z}$ denotes the discrete kernel of $\operator{D}$,
\begin{align*}
    \discrete{Z} := \{ v \in \discrete{V} : (\operator{D} v, q)_Q = 0 \ 
    \forall q \in \discrete{Q} \},
\end{align*}
and 
\begin{align}
    \label{eqn:b-inf-sup-discrete}
    \beta_X := \newinf_{q \in \discrete{Q}} \sup_{v \in \discrete{V}} 
    \frac{(\operator{D} v, q)_Q}{ \|v\|_V \|q\|_Q } > 0.
\end{align}
We assume that conditions 
\cref{eqn:a-inf-sup-kernel}--\cref{eqn:b-inf-sup-discrete}
are satisfied throughout the paper.
Note that for a structure-preserving method 
the discrete inf-sup condition \cref{eqn:b-inf-sup-discrete} 
automatically holds since $\operator{D} : \discrete{V} \to \discrete{Q}$
is surjective by construction (although $\beta_X$ may depend on the 
dimensions of $\discrete{V}$ and $\discrete{Q}$). 

\begin{remark}
    \label{rem:non-sp-schemes}		
    Suppose that we have a scheme that is not structure-preserving
    in the sense that $\operator{D} \discrete{V} \neq \discrete{Q}$, 
    but the
    discrete saddle point problem \cref{eqn:saddle-discrete} is
    still well-posed. Then, we can rewrite \cref{eqn:saddle-discrete}
    so that the discretization is structure-preserving for a different
    choice of $\operator{D}$. In particular, let 
    $\operator{P}_{\discrete{Q}} : Q \to \discrete{Q}$ be the 
    $(\cdot,\cdot)_Q$ 
    orthogonal projection operator and define 
    $\operator{D}_{\discrete{Q}} := \operator{P}_{\discrete{Q}} 
    \operator{D}$. Then
    problem \cref{eqn:saddle-discrete} is equivalent to
    \begin{alignat*}{2}
    a(u_X, v) + (\operator{D}_{\discrete{Q}} v, p_X)_{Q} &= F(v) \qquad 
        & &\forall v \in \discrete{V}, \\
    (\operator{D}_{\discrete{Q}} u_X, q)_{Q} &= G(q) \qquad 
        & &\forall q \in \discrete{Q}.
    \end{alignat*}
    Moreover, we have $\operator{D}_{\discrete{Q}} \discrete{V} = 
    \discrete{Q}$ by the inf-sup condition \cref{eqn:b-inf-sup-discrete}.	
    Consequently, even though we assume the discretization is 
    structure-preserving for the remainder of this manuscript, all
    results here apply equally well to non structure-preserving schemes
    provided that every occurrence of $\operator{D}$ is replaced
    with $\operator{D}_{\discrete{Q}}$. 
\end{remark}

\subsection{Key examples}
\label{sec:key-examples}

Here, we provide some examples that fall within
the saddle point framework given in \cref{eqn:saddle}.

\subsubsection{Hodge decompositions}
\label{sec:hodge}

Consider a discrete Hilbert complex
\begin{equation}
    \begin{tikzcd}
        0 \arrow{r} & \discrete{V}^{0}  \arrow{r}{\dee^0} & 
        \discrete{V}^1 \arrow{r}{\dee^1} &
        \cdots \arrow{r}{\dee^{n-1}} &
        \discrete{V}^n \arrow{r} & 
        0,
    \end{tikzcd}	
\end{equation}
where $\{ \discrete{V}^k \}$ are finite-dimensional Hilbert spaces
and $\dee^{k} : \discrete{V}^{k} \to \discrete{V}^{k+1}$ are linear 
operators with the complex 
property $\dee^{k+1} \circ \dee^{k} = 0$. Each space $\discrete{V}^k$ is 
equipped with 
an underlying inner product $(\cdot,\cdot)_{\discrete{W}^k}$,
often the $L^2$ inner product, and we define 
$(\cdot,\cdot)_{\discrete{V}^k} := 
(\cdot,\cdot)_{\discrete{W}^k} 
+ (\dee^k \cdot, \dee^{k} \cdot)_{\discrete{W}^{k+1}}$. We denote the 
harmonic forms in $\discrete{V}^k$ by
\begin{align}
    \label{eqn:hodge-harmonic-forms-def}
    \harmonic{H}^k := \{ \harmonic{h} \in \discrete{V}^k : \dee^{k} 
    \harmonic{h} \equiv 0 \text{ and } 
    (\harmonic{h}, \dee^{k-1} v )_{\discrete{V}^k} 
    = (\harmonic{h}, \dee^{k-1} v )_{\discrete{W}^k} 
    = 0 \ 
    \forall v \in \discrete{V}^{k-1} \},
\end{align}
and the orthogonal complement of the kernel of $\dee^k$ by
\begin{align*}
    \mathfrak{Z}^{k, \perp} &:= \{ v \in \discrete{V}^k : 
    (v, z)_{\discrete{V}^k} 
    = (v, z)_{\discrete{W}^k}
    = 0 \ \forall 
    z \in \discrete{V}^k \text{ with } \dee^k z \equiv 0 \}.
\end{align*}
Then, $\discrete{V}^k$ has the following 
$(\cdot,\cdot)_{\discrete{V}^k}$-
and $(\cdot,\cdot)_{\discrete{W}^k}$-orthogonal
decomposition:
\begin{align*}
    \discrete{V}^k = \dee^{k-1} \mathfrak{Z}^{k-1, \perp} \oplus^{\perp} 
    \harmonic{H}^k 
    \oplus^{\perp} \mathfrak{Z}^{k, \perp},
\end{align*}
which is also called the Hodge decomposition \cite{Arnold18}. In particular, for every $u 
\in \discrete{V}^{k}$, there exist unique 
$\sigma_{\perp} \in \mathfrak{Z}^{k-1, \perp}$, 
$\harmonic{h} \in \harmonic{H}^k$ and 
$u_{\perp} \in \mathfrak{Z}^{k, \perp}$ such that
\begin{align}
    \label{eqn:hodge-decomp}
    u = \dee^{k-1} \sigma_{\perp} + \harmonic{h} + u_{\perp},
\end{align}
and $\dee^{k-1} \sigma$, $\harmonic{h}$, and $u_{\perp}$ are mutually 
$(\cdot,\cdot)_{\discrete{V}^k}$-
and $(\cdot,\cdot)_{\discrete{W}^k}$-orthogonal.

Computing the Hodge decomposition \cref{eqn:hodge-decomp} arises in a 
variety of applications including flow visualization 
and singularity detection in computer graphics and motion representation
in computer vision and robotics; see \cite{Bhatia12} for a review.
The Hodge decomposition also plays a key role in designing helicity
preserving schemes for magnetohydrodynamics \cite{HeFarrellHuAndrews25}.
Typically, explicit bases for $\mathfrak{Z}^{k, \perp}$ and 
$\harmonic{H}^k$ are not readily available and computationally expensive to 
compute. Thus, we seek to compute the Hodge decomposition without 
explicit bases for these spaces.
Direct verification shows that $\sigma_{\perp}$ satisfies the following 
saddle point problem: Find 
$\sigma_{\perp} \in \discrete{V}^{k-1}$ 
and $v \in \dee^{k-1} \discrete{V}^{k-1}$ such that
\begin{subequations}
    \label{eqn:hodge-potential-mixed}
    \begin{alignat}{2}
        (\sigma_{\perp}, \tau)_{\discrete{W}^{k-1}} + (\dee^{k-1} \tau, 
        v)_{\discrete{W}^{k}} &= 
        0 \qquad 
        & 
        &\forall \tau \in \discrete{V}^{k-1}, \\
        (\dee^{k-1} \sigma_{\perp}, w)_{\discrete{W}^k} &= (u, 
        w)_{\discrete{W}^k} \qquad & 
        &\forall w 
        \in 
        \dee^{k-1} 
        \discrete{V}^{k-1}.
    \end{alignat}
\end{subequations}
Similarly, $u_{\perp}$ satisfies the following: Find 
$u_{\perp} \in \discrete{V}^{k}$ 
and $p \in \dee^{k} \discrete{V}^{k}$ such that
\begin{subequations}
    \label{eqn:hodge-dk-mixed}
    \begin{alignat}{2}
        (u_{\perp}, v)_{\discrete{W}^{k}} + (\dee^{k} v, 
        p)_{\discrete{W}^{k+1}} &= 0 
        \qquad & 
        &\forall v 
        \in \discrete{W}^{k}, \\
        (\dee^k u_{\perp}, q)_{\discrete{W}^{k+1}} &= (\dee^k u, 
        q)_{\discrete{W}^{k+1}} \qquad & 
        & \forall 
        q \in \dee^k \discrete{V}^k.
    \end{alignat}
\end{subequations}
Then, the final component is given by $\harmonic{h} = u - \dee^{k-1} 
\sigma_{\perp} - u_{\perp}$. Thus, if we can solve 
\cref{eqn:hodge-potential-mixed,eqn:hodge-dk-mixed} without a basis for 
$\dee^{k-1} \discrete{V}^{k-1}$ and $\dee^k \discrete{V}^k$, we can compute 
the Hodge decomposition of $u$.

\subsubsection{$C^1$-spline discretization of fourth-order problems}
\label{sec:fourth-order}

As shown in \cite{AinsworthParker24c1}, many variational problems 
well-posed in $H^2$ and their $H^2$-conforming discretizations can 
be expressed in the form \cref{eqn:saddle,eqn:saddle-discrete}. For 
simplicity, let $\Omega \subset \mathbb{R}^2$ be a connected Lipschitz 
domain whose boundary consists of the union of finitely many polygons; 
i.e. $\Omega$ is a polygon with a finite number of polygonal holes. We 
partition the boundary of $\Omega$ into $\Gamma_c$, $\Gamma_s$, and 
$\Gamma_f$ and define
\begin{align*}
    W := \{ w \in H^2(\Omega) : w|_{\Gamma_c \cup 
    \Gamma_s} = 0 \text{ and } \partial_n w|_{\Gamma_c} = 0 \}
\end{align*}
with norm $\|\cdot\|_W = \|\cdot\|_{H^2(\Omega)}$.
We assume that the partition is sufficiently regular (e.g.
$\Gamma_c$, $\Gamma_s$, and $\Gamma_f$ each consist of a finite
union of open line segments.)
One conforming subspace of $W$ is the space of 
$C^1$-continuous degree $k \in \mathbb{N}$ splines on a shape-regular 
conforming 
triangular mesh $\mathcal{T}$:
\begin{align*}
    \discrete{W} := \{ w \in C^1(\Omega) : w|_{K} \in 
    \mathcal{P}_k(K) \ \forall K \in \mathcal{T} \} \cap W.
\end{align*}
Consider a well-posed discrete problem
\begin{align}
    \label{eqn:h2-primal-discrete}
    w_X \in \discrete{W}: \qquad	A(w_X, v) = L(v) \qquad \forall v \in 
    \discrete{W},
\end{align}
where $A : W \times W \to \mathbb{R}$ is a continuous bilinear 
form, $L \in \dual{\discrete{W}}$, and the discrete inf-sup condition holds:
\begin{align}
    \label{eqn:h2-a-infsup}
    \gamma_X := 
    \newinf_{u \in \discrete{W}} \sup_{v \in \discrete{W}} 
        \frac{A(u, v)}{\|u\|_{W} \|v\|_{W}} 
    = \newinf_{v \in \discrete{W}} \sup_{u \in \discrete{W}} 
        \frac{A(u, v)}{\|u\|_{W} \|v\|_{W}} > 0.
\end{align}

If we define 
\begin{subequations}
    \label{eqn:DXDY}
    \begin{align}
    \discrete{X} &:= \{ v \in C(\Omega) : v|_{K} \in \mathcal{P}_k(K) \ 
    \forall K \in \mathcal{T} 
    \text{ and } v|_{\Gamma_c \cup \Gamma_s} = 0 \}, \\
    \discrete{Y} &:= \{ \vecbd{v} \in C(\Omega)^2 : \vecbd{v}|_{K} \in 
    \mathcal{P}_{k-1}(K) \ \forall K \in \mathcal{T}, \ 
    \vecbd{v}|_{\Gamma_c} = \vecbd{0}, \text{ and } \unitvec{t}\cdot 
    \vecbd{v}|_{\Gamma_s} = 0 \},
    \end{align}
\end{subequations}
where $\unitvec{t}$ is the unit tangent vector to $\partial \Omega$, then 
$\discrete{W} = \{ v \in \discrete{X} : \grad v \in \discrete{Y} \}$.
Suppose that $A(\cdot,\cdot)$ and $L(\cdot)$ may be decomposed similarly
\begin{align*}
    A(\cdot, \cdot) = a_0(\cdot, \cdot) + a_1(\grad \cdot, \grad \cdot) 
    \quad \text{and} \quad L(\cdot) = L_0(\cdot) + L_1(\grad \cdot),
\end{align*}
where $a_0(\cdot,\cdot)$ is bounded on $H^1(\Omega) \times H^1(\Omega)$, 
$a_1(\cdot,\cdot)$ is bounded on $H^1(\Omega)^2 \times H^1(\Omega)^2$,
$L_0 \in \dual{\discrete{X}}$, and $L_1 \in \dual{\discrete{Y}}$. 

Theorem 4.1 of \cite{AinsworthParker24c1} shows that problem 
\cref{eqn:h2-primal-discrete} can be written in the 
form \cref{eqn:saddle-discrete} as follows. 
Let $\discrete{V} := \discrete{X} \times \discrete{Y}$ and for 
$u = (u_0, \vecbd{u}_1), v = (v_0, \vecbd{v}_1) \in \discrete{V}$, define
\begin{alignat*}{2}
    (u, v)_V &:= (u_0, v_0)_{H^1(\Omega)} 
        + (\vecbd{u}_1, \vecbd{v}_1)_{H^1(\Omega)^2}, 
    \qquad & 
    \operator{D} v &:= \grad v_0 -	\vecbd{v}_1, \\
    a(u, v) &:=  a_0(u_0, v_0) + a_1(\vecbd{u}_1, \vecbd{v}_1), \qquad &
    F(v) &:= L_0(v_0) + L_1(\vecbd{v}_1).
\end{alignat*}
Moreover, set $\discrete{Q} := \operator{D} \discrete{V}$ and define
\begin{align*}
    (\vecbd{p}, \vecbd{q})_{Q} := (\vecbd{p}, \vecbd{q})_{L^2(\Omega)^2} + 
    (\rot \vecbd{p}, \rot \vecbd{q})_{L^2(\Omega)} \qquad \forall 
    \vecbd{p}, \vecbd{q} \in \operator{D} \discrete{V},
\end{align*}
where $\rot \vecbd{p} := \partial_y p_1 - \partial_x p_2$. Then, 
\cite[Lemma 4.1]{AinsworthParker24c1} shows that 
${\discrete{Z} = \{ (w, \grad w) : w \in \discrete{W} \}}$,
and so $\alpha_X \geq C \gamma_X$ and the continuity constant $M_a$ 
(defined by \cref{eqn:a-d-bounded} below) is bounded by the norms of 
$a_0(\cdot,\cdot)$ and $a_1(\cdot,\cdot)$. For this choice of spaces 
and operators, \cref{eqn:saddle-discrete} is well-posed and	
\cite[Theorem 4.1]{AinsworthParker24c1} shows that $u_{X} = (w_{X}, 
\grad w_X)$, where $w_X$ is the solution to 
\cref{eqn:h2-primal-discrete}. 

A basis for the space $\discrete{W}$ on general meshes is only known 
for $k \geq 4$ (see \cite{MorganScott75} for the case $k \geq 5$,
\cite{AlfeldPiperSchumaker87} for $k=4$ without boundary conditions,
and \cite{Scott19} for $k=4$ with boundary conditions)
and an implementation of $\discrete{W}$ is not available in 
any freely available finite element software%
\footnote{While Argyris elements are sometimes available 
\cite{kirby2018a,kirby2019a}, these implementations do not support 
strongly-imposed boundary conditions on general domains.}. Nevertheless,
one may still compute the $H^2$-conforming finite element approximation 
\cref{eqn:h2-primal-discrete} by solving the saddle point system 
\cref{eqn:saddle-discrete}, which involves spaces with at most 
$C^0$-continuity. For three-dimensional 
problems, where a basis for $\discrete{W}$ is not known on general meshes 
unless $k \geq 9$ \cite[Section 18.2]{LaiSchumaker07} 
and additional supersmoothness is imposed on subsimplices of the 
triangulation, one may modify the boundary conditions in 
$\discrete{Y}$ and $(\cdot,\cdot)_Q$ appropriately to again arrive at 
\cref{eqn:saddle-discrete}; see \cite{AinsworthParker24c1} for more details.
We also note that in one dimension, a similar approach works for $2n$-th 
order elliptic equations with a similar form \cite{Li17}.

\subsubsection{Scott--Vogelius discretization of incompressible flow}
\label{sec:key-examples-sv}

Let $\Omega \subset \mathbb{R}^d\ (d \in \{2, 3\})$ be a Lipschitz polyhedral 
domain.
Consider a linear incompressible flow problem of the form \cref{eqn:saddle} 
with $V = H_0^1(\Omega)^d$, 
$Q = \{ q \in L^2(\Omega) : \int_{\Omega} q \d{x} = 0 \}$,
$\operator{D} \vecbd{v} := -\div \vecbd{v}$, and
\begin{align}
    \label{eqn:flow-a-bilinear}
    a(\vecbd{u}, \vecbd{v}) := \nu(\symgrad{\vecbd{u}}, 
    \symgrad{\vecbd{v}})_{L^2(\Omega)} + 
    (\vecbd{w} \cdot \grad \vecbd{u}, \vecbd{v})_{L^2(\Omega)} 
    \qquad \forall \vecbd{u}, \vecbd{v} \in V,
\end{align}
where the given $\vecbd{w} \in \vecbd{L}^{\infty}(\Omega)$ is 
divergence-free. Note
that, thanks to Korn's inequality and the anti-symmetry of 
$(\vecbd{w} \cdot \grad \cdot, \cdot)$, we have
\begin{align*}
    \| \vecbd{v} \|_{H^1(\Omega)^d} \leq C_{\mathrm{K}} \| 
    \symgrad{\vecbd{v}}\|_{L^2(\Omega)^d} 
    \quad \text{and} \quad
    (\vecbd{w} \cdot \grad \vecbd{u}, \vecbd{v})_{L^2(\Omega)} 
    = -(\vecbd{u}, \vecbd{w} \cdot \grad \vecbd{v})_{L^2(\Omega)}
\end{align*}
for all $\vecbd{u}, \vecbd{v} \in V$,
and therefore $a(\cdot,\cdot)$ is coercive on $V$.

We discretize this problem with the Scott--Vogelius elements 
\cite{ScottVogelius85conf,ScottVogelius85norm}:
\begin{align}
    \label{eqn:flow-discrete-spaces-sv}
    \discrete{V} := \{ \vecbd{v} \in C(\Omega)^d : \vecbd{v}|_{K} \in 
    \mathcal{P}_k(K)^d \ \forall K \in \mathcal{T} \} \quad \text{and} 
    \quad \discrete{Q} = \div \discrete{V},
\end{align}
where $\mathcal{T}$ is a shape-regular, conforming simplicial mesh.
For $d=2$ and $k \geq 4$, $\beta_X \geq \beta_0 > 0$ for 
some $\beta_0$ independent of the mesh size $h = \max_{K \in \mathcal{T}} 
\mathrm{diam}(K)$ \cite{ScottVogelius85norm} and the polynomial degree $k$ 
\cite{AinsworthParker22unlocking}. However, $\beta_0$ does depend on a 
particular geometric quantity associated with the mesh, and one can 
construct meshes for which $\beta_X$ is arbitrarily small; we shall see one 
such example in \cref{sec:numerics-incompressible}. For some meshes with 
special structure, additional stability results are known; see e.g.\
\cite{Qin92} for $d=2$ and \cite{Zhang05,Zhang11} for $d=3$. The behavior 
of $\beta_X$ on general tetrahedral meshes remains an open problem.

\section{Penalty method}
\label{sec:penalty-method}

We first consider the stability of the penalty method, 
a simple computational method often used in conjunction with unstable 
discretizations of \cref{eqn:saddle-discrete} ($\beta_X \to 0$ as the 
discretization is refined). The results of this section are of independent 
interest, but they play a critical role in proving convergence
estimates for the iterated penalty method in the next section.	
For $\epsilon \geq 0$, a standard penalty method seeks
$u_{X, \epsilon} \in \discrete{V}$ and $p_{X, \epsilon} \in 
\discrete{Q}$ such that
\begin{subequations}
    \label{eqn:saddle-discrete-penalty}
    \begin{alignat}{2}
        \label{eqn:saddle-discrete-penalty-1}
        a(u_{X, \epsilon}, v) + (\operator{D} v, p_{X, \epsilon})_Q &= 
        F(v) \qquad & &\forall v \in \discrete{V}, \\
        \label{eqn:saddle-discrete-penalty-2}
        (\operator{D} u_{X, \epsilon}, q)_Q - \epsilon (p_{X, \epsilon}, 
        q)_Q &= G(q) \qquad & &\forall q\in \discrete{Q}.
    \end{alignat}
\end{subequations}
For $\epsilon = 0$, note that $u_{X, 0} = u_X$. 

In the case of 
structure-preserving discretizations, 
using the second equation \cref{eqn:saddle-discrete-penalty-2}, we can 
eliminate the pressure $p_{X,\epsilon}$ when $\epsilon > 0$ to obtain the 
following decoupled system:
\begin{subequations}
    \label{eqn:saddle-discrete-penalty-decoupled}
    \begin{alignat}{2}
        \label{eqn:saddle-discrete-penalty-decoupled-1}
        a_{\epsilon}(u_{X, \epsilon}, v) &= F(v) + 
        \epsilon^{-1}G(\operator{D} v) 
        \qquad & &\forall v \in \discrete{V}, \\
        \label{eqn:saddle-discrete-penalty-decoupled-2}
        p_{X, \epsilon} &= \epsilon^{-1}\left( \operator{D} u_{X, 
        \epsilon} - \operator{R}_{\discrete{Q}}^{-1} G \right) \qquad & 
        &\forall q \in \discrete{Q},
    \end{alignat}
\end{subequations}
where
\begin{align}
    a_{\epsilon}(u, v) := a(u, v) + \epsilon^{-1} (\operator{D} u, 
    \operator{D} v)_Q 
    \qquad 
    \forall u, v \in \discrete{V},
\end{align}
and $\operator{R}_{\discrete{Q}} : \discrete{Q} \to 
\dual{\discrete{Q}}$ is the Riesz map.
In general, 
\cref{eqn:saddle-discrete-penalty,eqn:saddle-discrete-penalty-decoupled} 
will not be well-posed for all $\epsilon > 0$, as the following example 
demonstrates.
\begin{example}
    Consider the $3 \times 3$ system
    \begin{align*}
        \begin{pmatrix}
            \alpha_0 & 0 & 0  \\
            0 & \alpha_1 & \beta \\
            0 & \beta & -\epsilon
        \end{pmatrix} \begin{pmatrix}
            u_1 \\ u_2 \\ p
        \end{pmatrix} = \begin{pmatrix}
            f_1 \\ f_2 \\ g
        \end{pmatrix}, 
    \end{align*}
    where $\alpha_0, \alpha_1 \in \mathbb{R}$ and  $\beta$ and $\epsilon$ 
    are positive constants. Then, 
    \cref{eqn:saddle-discrete-penalty-decoupled} becomes
    \begin{align*}
        p = \frac{\beta u_2  - g}{\epsilon},  \quad \text{and} \quad 
        \begin{pmatrix}
            \alpha_0 & 0 \\
            0 & \alpha_1 + \frac{\beta^2}{\epsilon}
        \end{pmatrix} \begin{pmatrix}
            u_1 \\ u_2 
        \end{pmatrix} = \begin{pmatrix}
            f_1 \\ f_2 + \frac{\beta}{\epsilon} g
        \end{pmatrix}.
    \end{align*}
    If $\alpha_1 < 0$, then the system is not invertible for the choice 
    $\epsilon = -\beta^2/\alpha_1 > 0$.
\end{example}

\subsection{Well-posedness}

To describe the stability of \cref{eqn:saddle-discrete-penalty}, we require
some additional constants. Let $M_a$ and $M_{\operator{D}}$ denote
the continuity constants of $a(\cdot,\cdot)$ and $\operator{D}$:
\begin{align}
    \label{eqn:a-d-bounded}
    M_{a} := \sup_{u, v \in V} \frac{ a(u, v) }{\|u\|_V \|v\|_V}
    \quad \text{and} \quad
    M_{\operator{D}} := \sup_{v \in V} \frac{\|\operator{D} v\|_Q}{ 
        \|v\|_V }.
\end{align}
Moreover, given a subspace $U \subseteq V$, let
\begin{align*}
    \|F\|_{\dual{U}} := \sup_{u \in U} \frac{|F(u)|}{\|u\|_V} \qquad 
    \forall F \in \dual{V}.
\end{align*}
The bilinear form $a(\cdot,\cdot)$ oftentimes possesses more structure
than \cref{eqn:a-infsup-kernel-discrete}. The three additional
cases we shall consider are the following: 

\makeatletter
\newcommand{\mylabel}[2]{#2\def\@currentlabel{#2}\label{#1}}
\makeatother

\begin{enumerate}
    \item[\mylabel{asmp:a-sym-nonneg-coercive-z}{(A1)}]
    $a(\cdot,\cdot)$ is 
    symmetric and there exists $\tilde{\alpha}_X > 0$ such that
    \begin{align}
        \label{eqn:a-nonnegative-coercive-kernel}
        a(u, u) \geq 0 \qquad \forall u \in \discrete{V} \quad \text{and} 
        \quad 
        a(z, z) \geq \tilde{\alpha}_X \|z\|_{V}^2 \qquad \forall z \in 
        \discrete{Z}.
    \end{align}
    
    \item[\mylabel{asmp:a-coercive-all}{(A2)}] There exists 
    $\tilde{\alpha}_X > 0$ such that
    \begin{align}
        \label{eqn:a-coercive-all}
        a(u, u) \geq \tilde{\alpha}_X \|u\|_{V}^2 \qquad \forall u \in 
        \discrete{V}.
    \end{align}
    
    \item[\mylabel{asmp:a-sym-coercive-all}{(A3)}] $a(\cdot,\cdot)$ is 
    symmetric and satisfies \ref{asmp:a-coercive-all}.
\end{enumerate}

\vspace{1em}

\noindent Note that \ref{asmp:a-sym-coercive-all} implies 
\ref{asmp:a-sym-nonneg-coercive-z}, which in turn implies 
\cref{eqn:a-infsup-kernel-discrete} with $\alpha_X = \tilde{\alpha}_X$. 
Conversely, if $a(\cdot,\cdot)$ is symmetric and nonnegative, then the 
inf-sup condition \cref{eqn:a-infsup-kernel-discrete} implies 
\cref{eqn:a-nonnegative-coercive-kernel} with $\tilde{\alpha}_X = 
\alpha_X^2 / M_a$ \cite[Lemma 4.2.2]{BoffiBrezziFortin13}. We also define
\begin{align}
    \label{eqn:phix-upsilonx-def}
    \Phi_X := \begin{cases}
        \sqrt{\frac{M_a}{\tilde{\alpha}_X}} & \text{if 
            \ref{asmp:a-sym-nonneg-coercive-z} holds}, \\
        \frac{M_a}{\tilde{\alpha}_X} & \text{if \ref{asmp:a-coercive-all} 
        holds},	\\
        \frac{M_a}{\alpha_X} & \text{otherwise}, 
    \end{cases}
    \quad \text{and} \quad 	\Upsilon_X 
    := \begin{cases}
        \Phi_X & \text{if \ref{asmp:a-coercive-all} holds}, \\
        1 + \Phi_X & \text{otherwise}.
    \end{cases}
\end{align}

Finally, we complement the discrete kernel $\discrete{Z}$ with two 
``$a(\cdot,\cdot)$-orthogonal" subspaces:
\begin{align*}
    \tilde{\discrete{Z}} &:= \{ v \in \discrete{V} : a(v, z) 
    = 0 \ \forall z \in \discrete{Z} \}
    \quad \text{and} \quad 
    \hat{\discrete{Z}} := \{ v \in \discrete{V} : a(z, v) 
    = 0 \ \forall z \in \discrete{Z} \}.
\end{align*}	
Note that if $a(\cdot,\cdot)$ is symmetric, then $\tilde{\discrete{Z}} = 
\hat{\discrete{Z}}$. The well-posedness of the penalty method is
summarized in the following theorem.

\begin{theorem}
    \label{thm:saddle-discrete-continuity}
    Let $F \in \dual{\discrete{V}}$ and $G \in \dual{\discrete{Q}}$
    and define
    \begin{align}
        \label{eqn:eps0-def}
        \epsilon_0 := \begin{cases}
            \infty & \text{if \ref{asmp:a-sym-nonneg-coercive-z} or 
            \ref{asmp:a-coercive-all} holds}, \\
            \frac{1}{M_a} \left( \frac{\beta_X}{\Upsilon_X} 
            \right)^2 & \text{otherwise}.
        \end{cases} 
    \end{align}
    For all $\epsilon \in [0, \epsilon_0)$, 
    there exists a unique 
    solution $u_{X, \epsilon} \in \discrete{V}$ and $p_{X, \epsilon} \in 
    \discrete{Q}$ to \cref{eqn:saddle-discrete-penalty}. Moreover, 
    there exist unique $z_{X, \epsilon} \in \discrete{Z}$ and 
    $\tilde{z}_{X, \epsilon} \in \tilde{\discrete{Z}}$ such that
    $u_{X, \epsilon} = z_{X, \epsilon} + \tilde{z}_{X, \epsilon}$,
    and there holds
    \begin{subequations}
        \label{eqn:saddle-perturb-cont}
        \begin{align}
            \label{eqn:saddle-perturb-z-cont}
            \|z_{X, \epsilon}\|_V &\leq C_{X}^{[1]} 
            \|F\|_{\dual{\discrete{Z}}}, \\
            \label{eqn:saddle-perturb-tildez-cont}
            \|\tilde{z}_{X, \epsilon}\|_V &\leq \epsilon C_{X, 
                \epsilon}^{[2]}
            \|F\|_{\dual{\hat{\discrete{Z}}}} 
            + C_{X, \epsilon}^{[3]} \|G\|_{\dual{\discrete{Q}}}, \\
            \label{eqn:saddle-perturb-p-cont}
            \|p_{X, \epsilon}\|_Q &\leq C_{X, \epsilon}^{[3]} 
            \|F\|_{\dual{\hat{\discrete{Z}}}} 
            + C_{X, \epsilon}^{[4]} \|G\|_{\dual{\discrete{Q}}}, 
        \end{align}
    \end{subequations}	
    where the constants $C_X^{[1]}$ and $C_{X, \epsilon}^{[i]}$, 
    $i=2,\ldots,4$, are given in 
    \cref{tab:saddle-perturb-cont-const}. 
\end{theorem}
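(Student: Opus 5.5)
We argue on the finite-dimensional system \cref{eqn:saddle-discrete-penalty}, so existence follows from uniqueness; both come from the a priori bounds \cref{eqn:saddle-perturb-cont} applied with $F=0$, $G=0$. The first step is the splitting $\discrete{V} = \discrete{Z} \oplus \tilde{\discrete{Z}}$. Given $u\in\discrete{V}$, \cref{eqn:a-infsup-kernel-discrete} provides a unique $z\in\discrete{Z}$ with $a(z,w)=a(u,w)$ for all $w\in\discrete{Z}$. Then $u-z\in\tilde{\discrete{Z}}$, and $\discrete{Z}\cap\tilde{\discrete{Z}}=\{0\}$, again by the inf-sup condition. Next we observe that $\operator{D}:\tilde{\discrete{Z}}\to\discrete{Q}$ is an isomorphism. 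Since $\operator{D}\tilde z = \operator{D}v$ whenever $v = z + \tilde z$, the lift $v$ from \cref{eqn:b-inf-sup-discrete} with $\|v\|_V\le \beta_X^{-1}\|q\|_Q$ gives $\tilde z$ with $\operator{D}\tilde z=q$ and $\|\tilde z\|_V\le (1+M_a/\alpha_X)\beta_X^{-1}\|q\|_Q$. More generally, $\|\tilde z\|_V\le \Upsilon_X\beta_X^{-1}\|\operator{D}\tilde z\|_Q$ for $\tilde z\in\tilde{\discrete{Z}}$. This is where the case-dependent $\Phi_X$ enters. Under \ref{asmp:a-coercive-all}, one can test directly with $\tilde z$, using $a(\tilde z,\tilde z)=a(\tilde z, \tilde z + z')$ for suitable choices. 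Under \ref{asmp:a-sym-nonneg-coercive-z}, the $a$-orthogonal projection is a projection in the $a$-seminorm, which yields the square-root constant.

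Writing $u_{X,\epsilon}=z+\tilde z$, we test \cref{eqn:saddle-discrete-penalty-1} with $v\in\discrete{Z}$. This gives $a(z,v)=F(v)-a(\tilde z,v)$. Caution: $\tilde z\in\tilde{\discrete{Z}}$ kills $a(\tilde z\cdot)$? No: $\tilde{\discrete{Z}}$ is defined by $a(\tilde z,w)=0$ for $w\in\discrete{Z}$, so $a(\tilde z,v)=0$ indeed. Hence $a(z,v)=F(v)$ on $\discrete{Z}$, and the inf-sup condition on $\discrete{Z}$ gives \cref{eqn:saddle-perturb-z-cont} with $C_X^{[1]}=1/\alpha_X$ (or $1/\tilde\alpha_X$), independent of $\epsilon$. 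Here $\alpha_X$ is used in its transposed form on $\discrete{Z}$. The remaining equations form a reduced perturbed problem for $(\tilde z,p)$. Testing \cref{eqn:saddle-discrete-penalty-1} with $v\in\hat{\discrete{Z}}$ gives
\begin{align*}
a(\tilde z,v)+(\operator{D}v,p)_Q = F(v)-a(z,v) = F(v),\qquad v\in\hat{\discrete{Z}},
\end{align*}
since $a(z,v)=0$ for $v\in \hat{\discrete{Z}}$. The second equation reads $\operator{D}\tilde z-\epsilon p=\operator{R}^{-1}_{\discrete{Q}}G$. Since $\operator{D}$ maps $\hat{\discrete{Z}}$ onto $\discrete{Q}$ as well, with $\hat{\discrete{Z}}$ the analogous complement, this is a square system on $\tilde{\discrete{Z}}\times\discrete{Q}$ against $\hat{\discrete{Z}}\times\discrete{Q}$. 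It depends on $F$ only through $\|F\|_{\dual{\hat{\discrete{Z}}}}$, as the statement asserts.

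To bound the reduced system, we substitute $\tilde z = \operator{D}|_{\tilde{\discrete{Z}}}^{-1}(\epsilon p + \operator{R}^{-1}G)$ and choose $v\in\hat{\discrete{Z}}$ with $\operator{D}v=p$, where $\|v\|_V\le\Upsilon_X\beta_X^{-1}\|p\|_Q$. This yields
\begin{align*}
\|p\|_Q^2 = F(v) - a(\tilde z, v) \le \|F\|_{\dual{\hat{\discrete{Z}}}}\|v\|_V + M_a\|\tilde z\|_V\|v\|_V .
\end{align*}
Bounding $\|\tilde z\|_V\le \Upsilon_X\beta_X^{-1}(\epsilon\|p\|_Q+\|G\|)$ gives $(1-\epsilon M_a\Upsilon_X^2\beta_X^{-2})\|p\|_Q\le\cdots$, which is solvable precisely for $\epsilon<\epsilon_0$. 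This produces \cref{eqn:saddle-perturb-p-cont}, and then \cref{eqn:saddle-perturb-tildez-cont} follows with the factor $\epsilon$ in front of $F$. In cases \ref{asmp:a-sym-nonneg-coercive-z} and \ref{asmp:a-coercive-all}, we avoid the smallness condition by instead testing with $v=\tilde z$ and $q=p$ and subtracting. In case (A1) we take $a$ symmetric, so that $\tilde{\discrete{Z}}=\hat{\discrete{Z}}$. This gives $a(\tilde z,\tilde z)+\epsilon\|p\|_Q^2=F(\tilde z)-G(p)$ with a nonnegative left side, valid for every $\epsilon$. Combined with the $\beta_X$ lift estimate, it gives $\epsilon$-uniform bounds, whose constants take the form $C^{[i]}_{X,\epsilon}$ given in the table. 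In case \ref{asmp:a-coercive-all} the same holds using $a(\tilde z,\tilde z)\ge0$; since $\tilde{\discrete{Z}}\ne\hat{\discrete{Z}}$ there, we test with $\tilde z$ projected onto $\hat{\discrete{Z}}$ or argue on the full system.

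The main obstacle is obtaining the sharp case-dependent constants, that is, the complement-norm estimate $\|\tilde z\|_V\le\Upsilon_X\beta_X^{-1}\|\operator{D}\tilde z\|_Q$ with exactly $\Phi_X$. The nonsymmetric coercive case (A2) is the hardest, since $\tilde{\discrete{Z}}\neq\hat{\discrete{Z}}$ and the energy argument must be reorganized.
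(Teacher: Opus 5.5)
\paragraph{General case.} Here your route differs from the paper's and almost works. The paper treats $F=0$ and $G=0$ separately. It proves $a_\epsilon(\check z,\check z)\ge(\epsilon^{-1}-\epsilon_0^{-1})\|\operator{D}\check z\|_Q^2$ on $\tilde{\discrete{Z}}\cup\hat{\discrete{Z}}$, and tests the $G=0$ problem with $\hat z\in\hat{\discrete{Z}}$, $\operator{D}\hat z=\operator{D}\tilde z$, using $a(\tilde z,\hat z)=a(\hat z,\hat z)$. You instead reduce to $(\tilde z,p)$ tested against $\hat{\discrete{Z}}\times\discrete{Q}$, lift $p$ into $\hat{\discrete{Z}}$, and absorb. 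That needs only the lift estimate, and it does reproduce $C_X^{[1]}$, $C_{X,\epsilon}^{[2]}$ and both occurrences of $C_{X,\epsilon}^{[3]}$.

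It does \emph{not} give $C_{X,\epsilon}^{[4]}$. Your $G$-coefficient for $\|p\|_Q$ is $\frac{M_a\Upsilon_X^2}{\beta_X^2}\frac{\epsilon_0}{\epsilon_0-\epsilon}=\frac{1}{\epsilon_0-\epsilon}$. This exceeds the table's value by a factor $\Upsilon_X\ge 2$, which comes from lifting into $\hat{\discrete{Z}}$. That constant matters: it is exactly what produces $\rho_0$ in \cref{cor:iter-penalty-conv}. The repair is one line. For $F=0$ you have $z=0$, so $(\operator{D}v,p)_Q=-a(\tilde z,v)$ for \emph{all} $v\in\discrete{V}$, and \cref{eqn:b-inf-sup-discrete} gives $\beta_X\|p\|_Q\le M_a\|\tilde z\|_V$. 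Inserting your bound $\|\tilde z\|_V\le\frac{\Upsilon_X}{\beta_X}\frac{\epsilon_0}{\epsilon_0-\epsilon}\|G\|_{\dual{\discrete{Q}}}$ then yields the tabulated value.

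\paragraph{Cases (A1)--(A3).} This is the substantive gap. You assert these three columns (``whose constants take the form given in the table'') but never derive them, and (A3) is not addressed at all. Your energy identity does give well-posedness for every $\epsilon$, but the sharp constants need ingredients you do not supply.

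Under (A1), $a$ is only nonnegative on $\tilde{\discrete{Z}}$. So $|\tilde z|_a^2+\epsilon\|p\|_Q^2=-G(p)$ alone gives only $\|p\|_Q\le\epsilon^{-1}\|G\|_{\dual{\discrete{Q}}}$. The $\epsilon$-uniform value $C_{X,\epsilon}^{[4]}=M_a/(\epsilon M_a+\beta_X^2)$ requires $\beta_X^2\|p\|_Q^2\le\|\operator{D}^{\star}p\|_{\dual{\discrete{V}}}^2=\|\operator{A}\tilde z\|_{\dual{\discrete{V}}}^2\le M_a|\tilde z|_a^2$, that is, Cauchy--Schwarz for the semi-inner product $a$. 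The $G$-part of $\tilde z$ also needs $\|\operator{D}\tilde z\|_Q\le\|G\|_{\dual{\discrete{Q}}}$, which you must extract from the same identity.

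Under (A2), testing with $\tilde z$ fails. In the full system it leaves the cross term $a(z,\tilde z)\neq 0$, which is controlled only by $\|F\|_{\dual{\discrete{Z}}}$, not $\|F\|_{\dual{\hat{\discrete{Z}}}}$. In your reduced system, $\tilde z\notin\hat{\discrete{Z}}$ is not an admissible test function. The missing idea is the identity $a(\tilde z,\hat z)=a(\hat z,\hat z)$ for $\hat z\in\hat{\discrete{Z}}$ with $\operator{D}\hat z=\operator{D}\tilde z$, which holds because $\tilde z-\hat z\in\discrete{Z}$. It turns the $G=0$ problem into $a_\epsilon(\hat z,\hat z)=F(\hat z)$. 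Then $C_{X,\epsilon}^{[2]}$ and $C_{X,\epsilon}^{[3]}$ follow from:
\begin{itemize}
\item $a_\epsilon(\hat z,\hat z)\ge(\tilde\alpha_X+\epsilon^{-1}\beta_X^2\Upsilon_X^{-2})\|\hat z\|_V^2$;
\item $a_\epsilon(\hat z,\hat z)\ge(\tilde\alpha_X M_{\operator{D}}^{-2}+\epsilon^{-1})\|\operator{D}\hat z\|_Q^2$;
\item $\|\tilde z\|_V\le\Upsilon_X\|\hat z\|_V$.
\end{itemize}
For $F=0$, combine the energy identity with $\beta_X\|p\|_Q\le M_a\tilde\alpha_X^{-1/2}|\tilde z|_a$ and $|\tilde z|_a\le M_{\operator{D}}\tilde\alpha_X^{-1/2}\|p\|_Q$.

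For (A3), the value $C_{X,\epsilon}^{[2]}=\Upsilon_X^2/(\epsilon M_a+\beta_X^2)$ needs the seminorm lift $|\tilde z|_a\le\sqrt{M_a}\,\beta_X^{-1}\|\operator{D}\tilde z\|_Q$. This gives $a_\epsilon(\tilde z,\tilde z)\ge(1+\epsilon^{-1}\beta_X^2/M_a)|\tilde z|_a^2$, and $F$ must then be measured in the $|\cdot|_a$-dual norm on $\tilde{\discrete{Z}}$ before converting back to $\|\cdot\|_V$.
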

The proof of \cref{thm:saddle-discrete-continuity} for each column in 
\cref{tab:saddle-perturb-cont-const} is given in 
\cref{sec:saddle-discrete-continuity-proof-a-gen,%
    sec:saddle-discrete-continuity-proof-a-sym-nonneg,%
    sec:saddle-discrete-continuity-proof-a-coercive-all,%
    sec:saddle-discrete-continuity-proof-a-sym-coercive-all}.

\begin{table}[htb]
    \caption{Stability constants appearing in 
        \cref{thm:saddle-discrete-continuity}, 
        where $\alpha_X$ is defined in \cref{eqn:a-infsup-kernel-discrete},
        $\tilde{\alpha}_X$ in \cref{eqn:a-nonnegative-coercive-kernel} or 
        \cref{eqn:a-coercive-all},
        $M_a$ and $M_{\operator{D}}$ in \cref{eqn:a-d-bounded}, 
        $\beta_X$ in \cref{eqn:b-inf-sup-discrete}, 
        $\Upsilon_X$ in \cref{eqn:phix-upsilonx-def}, 
        and $\epsilon_0$ in \cref{eqn:eps0-def}. In the final two rows of 
        the final column, the values of the constants are the same as the  
        referenced columns, with $\Upsilon_X = \sqrt{M_a / 
        \tilde{\alpha}_X}$ as in \cref{eqn:phix-upsilonx-def}.
        \label{tab:saddle-perturb-cont-const}}
    \begin{tabular*}{\textwidth}{@{\extracolsep\fill} c c c c c @{\extracolsep\fill}}
        & \multicolumn{4}{c}{Additional assumptions on 
                    $a(\cdot,\cdot)$} \\[0.2em]
            & None & \ref{asmp:a-sym-nonneg-coercive-z} 
            & \ref{asmp:a-coercive-all}
            & \ref{asmp:a-sym-coercive-all} \\
            \hline
            $\displaystyle C_{X}^{[1]}$ \rule{0pt}{1.5\normalbaselineskip} 
            & $\displaystyle \frac{1}{\alpha_X}$
            & $\displaystyle \frac{1}{\tilde{\alpha}_X}$ 
            & $\displaystyle \frac{1}{\tilde{\alpha}_X}$
            & $\displaystyle \frac{1}{\tilde{\alpha}_X}$ \\
            $\displaystyle C_{X, \epsilon}^{[2]}$ \rule{0pt}{2\normalbaselineskip} 
            & $\displaystyle \frac{\Upsilon_X^2}{\beta_X^2} 
            \left( \frac{\epsilon_0}{\epsilon_0 - \epsilon} \right)$
            & $\displaystyle \frac{\Upsilon_X^2}{\beta_X^2}$
            & $\displaystyle \frac{\Upsilon_X^3}{\epsilon \tilde{\alpha}_X \Upsilon_X^2 + 
            \beta_X^2}$ 
            & $\displaystyle \frac{\Upsilon_X^2}{\epsilon M_a + \beta_X^2}$ \\
            $\displaystyle C_{X, \epsilon}^{[3]}$ \rule{0pt}{2\normalbaselineskip} 
            & $\displaystyle \frac{\Upsilon_X}{\beta_X} \left(
            \frac{\epsilon_0}{\epsilon_0 - \epsilon} \right)$
            & $\displaystyle \frac{\Upsilon_X}{\beta_X}$
            &  $\displaystyle \frac{\Upsilon_X M_{\operator{D}}}{ 
            \sqrt{(\epsilon \tilde{\alpha}_X \Upsilon_X^2 + 
            \beta_X^2)(\epsilon \tilde{\alpha}_X + M_{\operator{D}}^2)
            } }$
            & same as \ref{asmp:a-coercive-all} \\
            $\displaystyle C_{X, \epsilon}^{[4]}$ \rule{0pt}{2\normalbaselineskip} 
            & $\displaystyle \frac{M_a \Upsilon_X}{\beta_X^2} 
            \left( \frac{\epsilon_0}{\epsilon_0 - \epsilon}  \right)$
            & $\displaystyle \frac{M_a}{\epsilon M_a + \beta_X^2}$
            & $\displaystyle \frac{M_a \Upsilon_X}{\epsilon M_a \Upsilon_X + \beta_X^2}$
            & same as \ref{asmp:a-sym-nonneg-coercive-z}
    \end{tabular*}
\end{table}

\Cref{thm:saddle-discrete-continuity} readily extends to the case
that the $(2,2)$ block of \cref{eqn:saddle-discrete-penalty} is
replaced with a symmetric coercive bilinear form 
$c(\cdot,\cdot) : \discrete{Q} \times \discrete{Q} \to \mathbb{R}$ with
\begin{align}
    \label{eqn:c-bounded-coercive}
    c(p, p) \geq \kappa_X \|p\|_{Q}^2 \quad \text{and} \quad |c(p, q)| \leq 
    M_c \|p\|_Q \|q\|_Q \qquad \forall p, q \in \discrete{Q}.
\end{align}
That is,
consider the problem: Find $u_{X, c, \epsilon} \in \discrete{V}$ and 
$p_{X, c, \epsilon} \in \discrete{Q}$ such that
\begin{subequations}
    \label{eqn:saddle-discrete-penalty-c22}
    \begin{alignat}{2}
        \label{eqn:saddle-discrete-penalty-c22-1}
        a(u_{X, c, \epsilon}, v) + (\operator{D} v, p_{X, c, \epsilon})_Q 
        &= F(v) 
        \qquad & &\forall v \in \discrete{V} \\
        \label{eqn:saddle-discrete-penalty-c22-2}
        (\operator{D} u_{X, c, \epsilon}, q)_Q 
            - \epsilon c(p_{X, c, \epsilon}, q) &= G(q) 
        \qquad & &\forall q\in \discrete{Q}.
    \end{alignat}
\end{subequations}
Then, we have the following result.
\begin{corollary}
    Let $M_{\operator{D}, c} := M_{\operator{D}} / \sqrt{\kappa_X}$
    and $\beta_{X, c} := \beta_X / \sqrt{M_c}$ and define $\epsilon_{0, c}$
    and $C_{X, c, \epsilon}^{[i]}$, $i=2,\ldots,4$, as in 
    \cref{eqn:eps0-def}
    and \cref{tab:saddle-perturb-cont-const} with $M_{\operator{D}}$ and 
    $\beta_X$ replaced by $M_{\operator{D}, c}$ and $\beta_{X, c}$, respectively.
    Then, for all $\epsilon \in [0, \epsilon_{0, c})$, there exists a 
    unique 
    solution $u_{X, \epsilon} \in \discrete{V}$ and $p_{X, \epsilon} \in 
    \discrete{Q}$ to \cref{eqn:saddle-discrete-penalty}. Moreover, 
    there exist unique $z_{X, c, \epsilon} \in \discrete{Z}$ and 
    $\tilde{z}_{X, c, \epsilon} \in \tilde{\discrete{Z}}$ such that
    $u_{X, c, \epsilon} = z_{X, c, \epsilon} + \tilde{z}_{X, c, \epsilon}$,
    and there holds
    \begin{subequations}
        \label{eqn:saddle-perturb-cont-c22}
        \begin{align}
            \label{eqn:saddle-perturb-z-cont-c22}
            \|z_{X, c, \epsilon}\|_V &\leq 
                C_{X}^{[1]} \|F\|_{\dual{\discrete{Z}}}, \\
            \label{eqn:saddle-perturb-tildez-cont-c22}
            \|\tilde{z}_{X, c, \epsilon}\|_V &\leq 
                \epsilon C_{X, c, \epsilon}^{[2]}		
                \|F\|_{\dual{\hat{\discrete{Z}}}} 
            + \frac{C_{X, c, \epsilon}^{[3]}}{\sqrt{\kappa_X}} 
            \|G\|_{\dual{\discrete{Q}}}, \\
            \label{eqn:saddle-perturb-p-cont-c22}
            \|p_{X, \epsilon}\|_Q &\leq 
                \frac{C_{X, c, \epsilon}^{[3]}}{\sqrt{\kappa_X}} 			
                \|F\|_{\dual{\hat{\discrete{Z}}}} 
            + \frac{C_{X, c, \epsilon}^{[4]}}{\kappa_X} 
            \|G\|_{\dual{\discrete{Q}}}, 
        \end{align}
    \end{subequations}	
    where $C_{X}^{[1]}$ is defined in \cref{tab:saddle-perturb-cont-const} 
    and $\kappa_X$ and $M_c$ are defined in \cref{eqn:c-bounded-coercive}.
\end{corollary}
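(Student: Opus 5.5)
The plan is to reduce the corollary to \cref{thm:saddle-discrete-continuity} by a change of inner product on $\discrete{Q}$. Since $c(\cdot,\cdot)$ is symmetric and coercive, it defines an inner product $(p,q)_{c} := c(p,q)$ on $\discrete{Q}$ with norm $\|q\|_c := \sqrt{c(q,q)}$. By \cref{eqn:c-bounded-coercive}, this norm satisfies $\sqrt{\kappa_X}\|q\|_Q \leq \|q\|_c \leq \sqrt{M_c}\|q\|_Q$.

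We next introduce $\operator{D}_c : \discrete{V} \to \discrete{Q}$ through the Riesz representation in $(\cdot,\cdot)_c$:
\[
c(\operator{D}_c v, q) = (\operator{D} v, q)_Q \qquad \forall q \in \discrete{Q}.
\]
With this operator, \cref{eqn:saddle-discrete-penalty-c22} becomes exactly \cref{eqn:saddle-discrete-penalty}, with $\operator{D}$ replaced by $\operator{D}_c$, the $Q$-inner product replaced by $(\cdot,\cdot)_c$, and $G$ unchanged as a functional on $\discrete{Q}$. The framework of \cref{thm:saddle-discrete-continuity} requires $\operator{D}_c\discrete{V} = \discrete{Q}$. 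This holds because $\operator{D}$ is onto $\discrete{Q}$, so any $q$ that is $c$-orthogonal to $\operator{D}_c\discrete{V}$ is $Q$-orthogonal to $\operator{D}\discrete{V} = \discrete{Q}$, and hence $q = 0$. The kernel is also unchanged, $\{v : \operator{D}_c v = 0\} = \discrete{Z}$. Consequently $\tilde{\discrete{Z}}$, $\hat{\discrete{Z}}$, $\alpha_X$, $\tilde{\alpha}_X$, $M_a$, and assumptions \ref{asmp:a-sym-nonneg-coercive-z}--\ref{asmp:a-sym-coercive-all} all carry over verbatim. Likewise $\Phi_X$ and $\Upsilon_X$ are unchanged, and so is $C_X^{[1]}$, which gives \cref{eqn:saddle-perturb-z-cont-c22}.

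The key step is to bound the new constants. For continuity, $\|\operator{D}_c v\|_c = \sup_q (\operator{D} v,q)_Q/\|q\|_c \leq M_{\operator{D}}\|v\|_V/\sqrt{\kappa_X}$, so $M_{\operator{D}}$ is replaced by $M_{\operator{D},c}$. For the inf-sup constant,
\[
\sup_v \frac{c(\operator{D}_c v,q)}{\|v\|_V\|q\|_c} = \sup_v\frac{(\operator{D} v,q)_Q}{\|v\|_V\|q\|_c} \geq \frac{\beta_X\|q\|_Q}{\|q\|_c} \geq \frac{\beta_X}{\sqrt{M_c}},
\]
so $\beta_X$ may be replaced by $\beta_{X,c}$. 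We must also check that every entry of \cref{tab:saddle-perturb-cont-const} and of $\epsilon_0$ is monotone in the right direction in these quantities, so that using the bounds in place of the exact constants is legitimate. Each entry is nonincreasing in $\beta_X$ and nondecreasing in $M_{\operator{D}}$, and $\epsilon_0$ is increasing in $\beta_X$; so for the exact $c$-constants we get $\epsilon_{0,c}$ no larger than the true threshold and constants no larger than $C^{[i]}_{X,c,\epsilon}$. Verifying this entry by entry is the main, if routine, obstacle; the $C^{[3]}$ entry in the \ref{asmp:a-coercive-all} column deserves a careful look. If monotonicity fails for some entry, I would instead reprove that column's estimate directly with inequalities rather than exact constants, which is how such proofs typically proceed anyway.

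Applying \cref{thm:saddle-discrete-continuity} then gives existence, uniqueness, the splitting $u = z + \tilde z$, and the bounds in the $c$-norms. What remains is converting norms:
\begin{itemize}
\item $\|p\|_Q \leq \|p\|_c/\sqrt{\kappa_X}$;
\item the dual norm satisfies $\|G\|_{\dual{\discrete{Q}},c} = \sup_q G(q)/\|q\|_c \leq \|G\|_{\dual{\discrete{Q}}}/\sqrt{\kappa_X}$;
\item the $F$-norms are unaffected.
\end{itemize}
Combining these, the $G$-term in the $\tilde z$ bound acquires a factor $1/\sqrt{\kappa_X}$, the $F$-term in the $p$ bound acquires $1/\sqrt{\kappa_X}$, and the $G$-term in the $p$ bound acquires $1/\kappa_X$. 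These are exactly the factors in \cref{eqn:saddle-perturb-tildez-cont-c22,eqn:saddle-perturb-p-cont-c22}.
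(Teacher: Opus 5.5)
Your proposal is correct and follows essentially the same route as the paper: define $\operator{D}_c$ by $c(\operator{D}_c v, q) = (\operator{D} v, q)_Q$, bound its continuity constant above by $M_{\operator{D}}/\sqrt{\kappa_X}$ and its inf-sup constant below by $\beta_X/\sqrt{M_c}$, apply \cref{thm:saddle-discrete-continuity} with $c(\cdot,\cdot)$ as the inner product on $\discrete{Q}$, and convert the $c$-norms of $p$ and $G$ back using $\kappa_X$. Your additional checks make explicit details the paper leaves implicit. These are the surjectivity of $\operator{D}_c$, the unchanged kernel, and the monotonicity of $\epsilon_0$ and the tabulated constants in $\beta_X$ and $M_{\operator{D}}$. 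That monotonicity does hold for every entry, including the \ref{asmp:a-coercive-all} entry $C^{[3]}$, since $M \mapsto M/\sqrt{\epsilon\tilde{\alpha}_X + M^2}$ is nondecreasing.
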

\begin{proof}
    Let $\operator{D}_c : \discrete{V} \to \discrete{Q}$ be defined by
    \begin{align*}
        c(\operator{D}_c v, q) = (\operator{D} v, q)_Q \qquad \forall q \in 
        \discrete{Q}, \ \forall v \in \discrete{V},
    \end{align*}
    and set $\|\cdot\|_c^2 := c(\cdot,\cdot)$. Then, there holds
    \begin{align*}
        \| \operator{D}_c v \|_c 
        = \frac{(\operator{D} v, \operator{D}_c v)_Q}{
            \| \operator{D}_c v \|_c}
        \leq \|\operator{D} v\|_{Q} 
            \frac{\|\operator{D}_c v \|_Q}{\| \operator{D}_c v \|_c} 
        \leq \frac{M_{\operator{D}}}{\sqrt{\kappa_X}} \|v\|_V = 
        M_{\operator{D}, c} \|v\|_V  \qquad \forall v \in \discrete{V}, 
    \end{align*}
    and
    \begin{align*}
        \newinf_{q \in \discrete{Q}} \sup_{v \in \discrete{V}} 
        \frac{c(\operator{D}_c v, q)}{ \|v\|_V \|q\|_c } 
        \geq \frac{1}{\sqrt{M_c}} 
        \newinf_{q \in \discrete{Q}} \sup_{v \in \discrete{V}} 
        \frac{(\operator{D} v, q)}{ \|v\|_V \|q\|_Q }
        = \frac{\beta_X}{\sqrt{M_c}} = \beta_{X, c}.
    \end{align*}
    Since $u_{X, c, \epsilon}$ and $p_{X, \epsilon}$ satisfy
    \begin{alignat*}{2}
        a(u_{X, c, \epsilon}, v) + c(\operator{D}_c v, p_{X, c, \epsilon})
        &= F(v) 
        \qquad & &\forall v \in \discrete{V}, \\
        c(\operator{D} u_{X, c, \epsilon}, q) 
        - \epsilon c(p_{X, c, \epsilon}, q) &= G(q) 
        \qquad & &\forall q\in \discrete{Q},
    \end{alignat*}
    we apply \cref{thm:saddle-discrete-continuity} 
    with ``$(\cdot,\cdot)_Q = c(\cdot,\cdot)$" and ``$M_{\operator{D}} = 
    M_{\operator{D}, c}$" to obtain
    \cref{eqn:saddle-perturb-z-cont-c22} and
    \begin{align*}
        \|\tilde{z}_{X, c, \epsilon}\|_V &\leq 
        \epsilon C_{X, c, \epsilon}^{[2]}		
        \|F\|_{\dual{\hat{\discrete{Z}}}} 
        + C_{X, c, \epsilon}^{[3]} \sup_{q \in \discrete{Q}} 
        \frac{|G(q)|}{\|q\|_c}, 
        \quad \text{and} \quad 
        \|p_{X, \epsilon}\|_c \leq 
        C_{X, c, \epsilon}^{[3]} 			
        \|F\|_{\dual{\hat{\discrete{Z}}}} 
        + C_{X, c, \epsilon}^{[4]} \sup_{q \in \discrete{Q}} 
        \frac{|G(q)|}{\|q\|_c}.
    \end{align*}
    The result now follows from \cref{eqn:c-bounded-coercive}.
\end{proof}

\subsection{Error}

We now turn to the error of the penalty method.	
\begin{lemma}
    \label{lem:saddle-perturb-error}
    Let $u_X \in \discrete{V}$ and $p_X \in \discrete{Q}$ denote the 
    solution 
    to \cref{eqn:saddle-discrete} and $u_{X, \epsilon} \in \discrete{V}$ 
    and 
    $p_{X, \epsilon} \in \discrete{Q}$ the solution to 
    \cref{eqn:saddle-discrete-penalty} with $\epsilon > 0$ as in 
    \cref{thm:saddle-discrete-continuity}. Let $u_{X} = z_X + 
    \tilde{z}_{X}$ 
    and $u_{X, \epsilon} = z_{X, \epsilon} + \tilde{z}_{X, \epsilon}$ with 
    $z_{X}, z_{X, \epsilon} \in \discrete{Z}$ and $\tilde{z}_{X}, 
    \tilde{z}_{X, \epsilon} \in \tilde{\discrete{Z}}$. Then, there holds
    \begin{subequations}
        \label{eqn:saddle-perturb-error}
        \begin{align}
            \label{eqn:saddle-perturb-z-error}
            z_X &= z_{X, \epsilon}, \\
            \label{eqn:saddle-perturb-tildez-error}
            \| \tilde{z}_X - \tilde{z}_{X, \epsilon}\|_V &\leq \epsilon 
            \min\left\{  C_{X, \epsilon}^{[2]} \left( 
            \|F\|_{\dual{\hat{\discrete{Z}}}} + M_a \|\tilde{z}_X\|_V 
            \right), C_{X, \epsilon}^{[2,3]} \|p_X\|_Q \right\},\\
            \label{eqn:saddle-perturb-p-error}
            \|p_X - p_{X, \epsilon}\|_Q &\leq \min\left\{ 
            \epsilon C_{X,\epsilon}^{[4]} \|p_X\|_Q,
            \frac{M_a}{\beta_X} \|\tilde{z}_X - \tilde{z}_{X, \epsilon}\|_V
            \right\},
        \end{align}
    \end{subequations}
    where $C_{X, \epsilon}^{[j]}$, $j =2,\ldots, 4$, 
    are defined in \cref{tab:saddle-perturb-cont-const}, 
    $C_{X, \epsilon}^{[2,3]} := \min\{
        M_{\operator{D}} C_{X, \epsilon}^{[2]}, C_{X, \epsilon}^{[3]}\}$, 
    $\beta_X$ is defined in \cref{eqn:b-inf-sup-discrete}, and
    $M_a$ is defined in \cref{eqn:a-d-bounded}.
\end{lemma}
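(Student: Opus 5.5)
The plan is to subtract the two systems and read off the error equations. The differences $e_u := u_X - u_{X,\epsilon}$ and $e_p := p_X - p_{X,\epsilon}$ satisfy
\begin{align*}
a(e_u, v) + (\operator{D} v, e_p)_Q &= 0 \qquad \forall v \in \discrete{V}, \\
(\operator{D} e_u, q)_Q + \epsilon (e_p, q)_Q &= \epsilon (p_X, q)_Q \qquad \forall q \in \discrete{Q}.
\end{align*}
This is again a perturbed system of the form \cref{eqn:saddle-discrete-penalty}, with data $F = 0$ and $G(q) = \epsilon (p_X, q)_Q$, except that the sign of the pressure is flipped; replacing $e_p$ by $-e_p$ restores the original form. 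Hence \cref{thm:saddle-discrete-continuity} applies with $\|F\|=0$ and $\|G\|_{\dual{\discrete{Q}}} \le \epsilon \|p_X\|_Q$. Since $e_u = (z_X - z_{X,\epsilon}) + (\tilde z_X - \tilde z_{X,\epsilon})$ is its unique $\discrete{Z}\oplus\tilde{\discrete{Z}}$ splitting, \cref{eqn:saddle-perturb-z-cont} gives $z_X = z_{X,\epsilon}$. The same theorem also yields $\|\tilde z_X - \tilde z_{X,\epsilon}\|_V \le \epsilon C^{[3]}_{X,\epsilon}\|p_X\|_Q$ and $\|e_p\|_Q \le \epsilon C^{[4]}_{X,\epsilon}\|p_X\|_Q$, which are the $C^{[3]}$ part of $C^{[2,3]}$ and the first entry of \cref{eqn:saddle-perturb-p-error}.

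For the remaining bounds I would use a second rewriting that moves the right-hand side into the first equation. By structure preservation there is $w\in\discrete{V}$ with $\operator{D} w = p_X$, and choosing it in $\tilde{\discrete{Z}}$ gives $\|w\|_V \le \|p_X\|_Q/\beta_X$. It is cleaner to set $w := \epsilon^{-1}$-free: instead, use $(\operator{D} v, p_X)_Q = F(v) - a(u_X,v)$ and eliminate the pressure via \cref{eqn:saddle-discrete-penalty-decoupled}. Then
\[
a_\epsilon(e_u, v) = a(u_X,v) + \epsilon^{-1}(\operator{D}u_X,\operator{D}v)_Q - F(v) - \epsilon^{-1}G(\operator{D}v) = -(\operator{D} v, p_X)_Q,
\]
using $\operator{D}u_X = \operator{R}^{-1}_{\discrete{Q}}G$. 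Equivalently, $e_u$ is the velocity of a penalty problem with first-equation data $\tilde F(v) := -\epsilon(\operator{D} v,p_X)_Q/\epsilon$. I will rather view it as data $\tilde F(v) = F(v) - a(\tilde z_X, v)$ applied to $\tilde z_{X,\epsilon}-\tilde z_X$ after subtracting $z_X$, which should give the first entry of the min with $C^{[2]}_{X,\epsilon}$. Indeed $u_{X,\epsilon} - z_X$ solves the penalty system with $F$ replaced by $F - a(z_X,\cdot)$, and $\tilde z_X$ is the $G$-part. The concrete route is to compare $\tilde z_{X,\epsilon}$ with $\tilde z_X$ through the $F$-term of \cref{eqn:saddle-perturb-tildez-cont}, where $\|F - a(u_X,\cdot)\|_{\dual{\hat{\discrete Z}}} \le \|F\|_{\dual{\hat{\discrete Z}}} + M_a\|\tilde z_X\|_V$. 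For the $M_{\operator D}C^{[2]}$ option, the data $v\mapsto(\operator{D}v,p_X)_Q$ has $\dual{\hat{\discrete Z}}$ norm at most $M_{\operator D}\|p_X\|_Q$, and feeding it into the $\epsilon C^{[2]}$ term gives $\epsilon M_{\operator D} C^{[2]}\|p_X\|_Q$.

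The last pressure bound, $\|e_p\|_Q \le (M_a/\beta_X)\|\tilde z_X - \tilde z_{X,\epsilon}\|_V$, comes from the first error equation. For any $v$ we have $(\operator{D} v, e_p)_Q = -a(e_u,v)$. Restricting the supremum to $v\in\hat{\discrete{Z}}$ kills the $z$-part of $e_u$, since $a(z,v)=0$ there, so only $\tilde z_X - \tilde z_{X,\epsilon}$ survives. The discrete inf-sup condition \cref{eqn:b-inf-sup-discrete} must then hold with $v$ restricted to $\hat{\discrete Z}$. This holds because $\hat{\discrete Z}$ is complementary to $\discrete{Z}$ and $\operator D$ vanishes on $\discrete Z$, but the constant $\beta_X$ is not preserved automatically under a non-orthogonal complement.

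The main obstacle is therefore this restricted inf-sup step and, likewise, the precise bookkeeping for the $C^{[2]}$ bounds. If restricting to $\hat{\discrete Z}$ loses the constant, I would first try to mirror whatever argument the paper uses inside the proof of \cref{thm:saddle-discrete-continuity} to bound $p$ by the $\tilde z$ component, presumably through the same decomposition.
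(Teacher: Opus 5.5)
Your first two steps match the paper's argument. The error pair $(e_u,e_p)$ solves \cref{eqn:saddle-discrete-penalty} with $F=0$ and $G=-\epsilon(p_X,\cdot)_Q$. This gives $z_X=z_{X,\epsilon}$, $\|\tilde z_X-\tilde z_{X,\epsilon}\|_V\le\epsilon C^{[3]}_{X,\epsilon}\|p_X\|_Q$ and $\|e_p\|_Q\le\epsilon C^{[4]}_{X,\epsilon}\|p_X\|_Q$. Eliminating the pressure then gives $a_\epsilon(e_u,v)=a(u_X,v)-F(v)=-(\operator{D}v,p_X)_Q=:L(v)$. The $F$-term of \cref{eqn:saddle-perturb-tildez-cont}, with $\|L\|_{\dual{\hat{\discrete{Z}}}}\le\min\{\|F\|_{\dual{\hat{\discrete{Z}}}}+M_a\|\tilde z_X\|_V,\,M_{\operator{D}}\|p_X\|_Q\}$, finishes \cref{eqn:saddle-perturb-tildez-error}.

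There are two slips in these steps. First, the second error equation is $(\operator{D}e_u,q)_Q-\epsilon(e_p,q)_Q=-\epsilon(p_X,q)_Q$. This is already of the form \cref{eqn:saddle-discrete-penalty}, so no sign flip is needed; flipping $e_p$ alone would break the first equation. Second, the detour through $u_{X,\epsilon}-z_X$, with ``$\tilde z_X$ is the $G$-part,'' is neither needed nor accurate. Apply the theorem to $e_u$ with data $(L,0)$, and use uniqueness of the $\discrete{Z}\oplus\tilde{\discrete{Z}}$ splitting to identify its $\tilde{\discrete{Z}}$-component as $\tilde z_X-\tilde z_{X,\epsilon}$.

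The genuine gap is the second entry of \cref{eqn:saddle-perturb-p-error}, which you leave open as the ``main obstacle.'' That obstacle is self-inflicted. You have already shown $z_X=z_{X,\epsilon}$, so $e_u=\tilde z_X-\tilde z_{X,\epsilon}$ has no $\discrete{Z}$-component to kill, and there is no reason to restrict the supremum to $\hat{\discrete{Z}}$. The unrestricted inf-sup condition \cref{eqn:b-inf-sup-discrete}, together with the first error equation, gives directly
\[
\beta_X\|e_p\|_Q\le\sup_{v\in\discrete{V}}\frac{(\operator{D}v,e_p)_Q}{\|v\|_V}=\sup_{v\in\discrete{V}}\frac{-a(e_u,v)}{\|v\|_V}\le M_a\|e_u\|_V=M_a\|\tilde z_X-\tilde z_{X,\epsilon}\|_V,
\]
which is exactly the paper's step.

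Your suspicion about the restricted route is correct, incidentally. Writing $v=z+\hat z$ with $\|\hat z\|_V\le\Upsilon_X\|v\|_V$ (\cref{lem:v-z-tildez-decomp}) only guarantees a restricted inf-sup constant of $\beta_X/\Upsilon_X$. That route would therefore have produced the weaker constant $M_a\Upsilon_X/\beta_X$.
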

\begin{proof}
    Let $e_{X, \epsilon} := u_{X} - u_{X, \epsilon}$ and $r_{X, \epsilon} 
    := p_{X} 
    - p_{X, \epsilon}$. Then, there holds
    \begin{subequations}
        \label{proof:eqn:sadlle-perturb-error-mixed}
        \begin{alignat}{2}
            \label{proof:eqn:sadlle-perturb-error-mixed-1}
            a(e_{X, \epsilon}, v) + (\operator{D} v, r_{X, \epsilon})_Q &= 
            0 
            \qquad & &\forall v \in \discrete{V}, \\
            \label{proof:eqn:sadlle-perturb-error-mixed-2}
            (\operator{D} e_{X, \epsilon}, q)_Q - \epsilon(r_{X, \epsilon}, 
            q)_Q 
            &= -\epsilon(p_{X}, q) 
            \qquad & &\forall q \in \discrete{Q}.
        \end{alignat}
    \end{subequations}
    Applying \cref{eqn:saddle-perturb-cont} then gives that $e_{X, 
        \epsilon} \in 
    \tilde{\discrete{Z}}$ and
    \begin{align*}
        \| e_{X, \epsilon} \|_{V} & \leq \epsilon C_{X, \epsilon}^{[3]} 
        \|p_X\|_Q 
        \quad \text{and} \quad
        \| r_{X, \epsilon} \|_{Q} \leq \epsilon C_{X, \epsilon}^{[4]} 
        \|p_X\|_Q.
    \end{align*}
    Moreover, the inf-sup condition \cref{eqn:b-inf-sup-discrete} gives
    \begin{align*}
        \beta_X \|r_{X, \epsilon}\|_Q \leq \sup_{v \in \discrete{V}} 
        \frac{(\operator{D} v, r_{X, \epsilon})_Q }{\|v\|_V}
        = \sup_{v \in \discrete{V}} \frac{a(e_{X, \epsilon}, v)}{\|v\|_V} 
        \leq M_a \|e_{X, \epsilon}\|_V.
    \end{align*}    
    
    We can obtain an alternative estimate for $e_{X, \epsilon}$ by 
    eliminating $r_{X, \epsilon}$. In particular, there holds
    \begin{align*}
        a_{\epsilon}(e_{X, \epsilon}, v) = -(p_X, \operator{D} v)_Q = 
        a(u_X, v) - 
        F(v) =: L(v) \qquad \forall v 
        \in \discrete{V}.
    \end{align*}	
    Note that $L$ vanishes on $\discrete{Z}$ by 
    \cref{eqn:saddle-discrete-1} 
    and 
    \begin{align*}
        L(\hat{z}) = a(\tilde{z}_{X}, \hat{z}) - F(\hat{z})  \qquad \forall 
        \hat{z} 
        \in \hat{\discrete{Z}} \implies \|L\|_{\dual{\hat{\discrete{Z}}}} 
        \leq 
        \|F\|_{\dual{\hat{\discrete{Z}}}} + M_a \| \tilde{z}_X\|_V.
    \end{align*}
    Alternatively, the identity $L(\hat{z}) = -(p_X, \operator{D} 
    \hat{z})_Q$ gives $\|L\|_{\dual{\hat{\discrete{Z}}}} \leq 
    M_{\operator{D}} \|p_X\|_Q$.
    Now applying \cref{eqn:saddle-perturb-cont}, we obtain
    \begin{align*}
        \| \tilde{z}_X - \tilde{z}_{X, \epsilon}\|_V &\leq \epsilon C_{X, 
            \epsilon}^{[2]} \min\left\{ 
        \|F\|_{\dual{\hat{\discrete{Z}}}} + M_a \|\tilde{z}_X\|_{V}, 
        M_{\operator{D}} \|p_X\|_{Q} \right\} .
    \end{align*}
\end{proof}

\begin{remark}
    If $a(\cdot,\cdot)$ satisfies \ref{asmp:a-coercive-all}, one
    can obtain a better dependence on $\alpha_X$ and $M_a$
    in \cref{eqn:saddle-perturb-error} by first
    applying \cref{eqn:saddle-perturb-error} with 
    $\|\cdot\|_V = |\cdot|_a := \sqrt{a(\cdot,\cdot)}$ for which $\tilde{\alpha}_X = M_a = 1$,
    and then applying \cref{eqn:a-coercive-all,eqn:a-d-bounded}
    to obtain estimates in the original norm on $V$. 
\end{remark}	

\section{Iterated penalty method}
\label{sec:iterated-penalty}

The iterated penalty method reads as follows. 
Let $\lambda, \rho \geq 0$ be given and 
$\discrete{Q} \ni p_{X, \lambda, \rho}^{0} := 0$.
For $n \in \mathbb{N}$, define 
$u_{X, \lambda, \rho}^{n} \in \discrete{V}$ and 
$p_{X, \lambda, \rho}^{n} \in \discrete{Q}$ by
\begin{subequations}
    \label{eqn:iter-penalty-2param}
    \begin{alignat}{2}
        \label{eqn:iter-penalty-2param-1}
        a(u_{X, \lambda, \rho}^{n}, v) + \lambda (\operator{D} u_{X, 
            \lambda, \rho}^{n}, 
        \operator{D} v)_Q &= F(v) + \lambda G(\operator{D} 
        v) - (\operator{D} v, p_{X, \lambda, \rho}^{n-1})_Q  \quad 
        & &\forall v \in \discrete{V}, \\
        \label{eqn:iter-penalty-2param-2}
        p_{X, \lambda, \rho}^{n} &= p_{X, \lambda, \rho}^{n-1} + \rho 
        (\operator{D} u_{X, \lambda, \rho}^{n} - 
        \operator{R}_{\discrete{Q}}^{-1} G). \quad & &
    \end{alignat}
\end{subequations}
One may view \cref{eqn:iter-penalty-2param} as an 
Uzawa iteration \cite{ArrowHurwiczUzawa58}
on the equivalent augmented system
\begin{alignat*}{2}
    a_{1/\lambda}(u_X, v) + (\operator{D} v, p_X)_Q 
        &= F(v) + \lambda G(\operator{D} v)
        \qquad & &\forall v \in \discrete{V}, \\
    (\operator{D} u_X, q)_Q &= G(q) \qquad & &\forall q \in \discrete{Q}.		
\end{alignat*}
In standard references (see e.g. \cite[Chapter 1, 
(5.7)--(5.9)]{FortinGlow83} or \cite[eq. (13.1.4)]{BrennerScott08}), 
the Lagrange multiplier index in \cref{eqn:iter-penalty-2param-2} is 
``$n+1$". Here, we use ``$n$" for aesthetic purposes. The goal of this 
section is to prescribe conditions on $\rho$ and $\lambda$ which guarantee
that the iterated penalty method is well-defined and converges at a 
geometric rate to the solution to \cref{eqn:saddle-discrete}.

To describe the error propagation operator for the iterated penalty method,
we first express \cref{eqn:iter-penalty-2param} in operator notation.	
Let $\operator{A}_{\epsilon} : \discrete{V} \to \dual{\discrete{V}}$ denote 
the linear operator associated with $a_{\epsilon}(\cdot,\cdot)$:
\begin{align*}
    (\operator{A}_{\epsilon} v)(w) = a_{\epsilon}(v, w) \qquad \forall v, w 
    \in \discrete{V},
\end{align*}
with $\operator{A} := \operator{A}_0$, and let $\operator{D}^{\star} : 
\discrete{Q} \to \dual{\discrete{V}}$ 
denote 
the following adjoint of $\operator{D}$:
\begin{align*}
    (\operator{D}^{\star} q)(v) = (\operator{D} v, q)_{Q} \qquad \forall 
    v \in 
    \discrete{V}, \ \forall q \in \discrete{Q}.
\end{align*}
The first result in this section characterizes the error propagation operator.
\begin{lemma}
    \label{lem:iter-penalty-2param-error-prop}
    Let $F \in \dual{\discrete{V}}$, $G \in \dual{\discrete{Q}}$,
    and $\rho > 0$.
    For $\lambda > 1/\epsilon_0$, 
    the iterates 
    $u_{X, \lambda, \rho}^{n} \in \discrete{V}$ and 
    $p_{X, \lambda, \rho}^{n} \in \discrete{Q}$, $n \in \mathbb{N}$,
    given by \cref{eqn:iter-penalty-2param} are well-defined.
    Moreover, let 
    $u_X \in \discrete{V}$ and $p_X \in \discrete{Q}$ denote the 
    solution to \cref{eqn:saddle-discrete}.
    For all $n \in \mathbb{N}$, there holds 
    $u_X - u_{X, \lambda, \rho}^{n} \in \tilde{\discrete{Z}}$ and
    \begin{subequations}
        \label{eqn:iter-penalty-2param-error-prop}
        \begin{align}
            \label{eqn:iter-penalty-2param-error-prop-u}
            u_X - u_{X, \lambda, \rho}^{n} &= \left(  
            \operator{I}_{\discrete{V}} - \rho 
            \operator{A}_{1/\lambda}^{-1} \operator{D}^\star \operator{D} 
            \right)^{n-1}  (u_X - u_{X, 1/\lambda}), \\
            \label{eqn:iter-penalty-2param-error-prop-p}
            p_X - p_{X, \lambda, \rho}^n &= \left( 
            \operator{I}_{\discrete{Q}} 
            - \rho \operator{D} \operator{A}_{1/\lambda}^{-1} 
            \operator{D}^{\star} \right)^n p_X,
        \end{align}
    \end{subequations}
    where $u_{X, 1/\lambda} \in \discrete{V}$ satisfies 
    \cref{eqn:saddle-discrete-penalty-decoupled-1} with $\epsilon=1/\lambda$
    and $\mathcal{I}_{\circ} : \circ \to \circ$ is the identity operator.
\end{lemma}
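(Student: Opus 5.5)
Well-definedness comes first. Since $\lambda > 1/\epsilon_0$, we have $\epsilon := 1/\lambda \in (0,\epsilon_0)$, so \cref{thm:saddle-discrete-continuity} makes \cref{eqn:saddle-discrete-penalty} uniquely solvable for this $\epsilon$ and every pair of data. I will use this to show that $\operator{A}_{1/\lambda}$ is invertible. Because $\operator{D}\discrete{V} = \discrete{Q}$, each $v$ solving $a_{1/\lambda}(v,w) = \ell(w)$ for all $w$ yields a solution of \cref{eqn:saddle-discrete-penalty}: take $p = \lambda\operator{D}v$ and $G = 0$, with $F=\ell$. Conversely, every solution of the penalty system has this form. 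Hence uniqueness and existence of the penalty system are equivalent to bijectivity of $\operator{A}_{1/\lambda}$ on the finite-dimensional space $\discrete{V}$. Each step of \cref{eqn:iter-penalty-2param-1} reads $\operator{A}_{1/\lambda} u^n = F + \lambda G\circ\operator{D} - \operator{D}^\star p^{n-1}$, so $u^n$ is uniquely defined. The update \cref{eqn:iter-penalty-2param-2} then defines $p^n$ explicitly.

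Next, the error equations. The exact solution satisfies $\operator{A}_{1/\lambda}u_X + \operator{D}^\star p_X = F + \lambda G\circ\operator{D}$, since $(\operator{D}u_X,\operator{D}v)_Q = G(\operator{D}v)$ by \cref{eqn:saddle-discrete-2} and $\operator{D}v\in\discrete{Q}$. It also satisfies $\operator{D}u_X = \operator{R}_{\discrete{Q}}^{-1}G$. Set $e^n := u_X - u^n$ and $r^n := p_X - p^n$. Subtracting gives $\operator{A}_{1/\lambda}e^n = -\operator{D}^\star r^{n-1}$ and $r^n = r^{n-1} - \rho\operator{D}e^n$. Therefore
\[
r^n = (\operator{I}_{\discrete{Q}} - \rho\operator{D}\operator{A}_{1/\lambda}^{-1}\operator{D}^\star) r^{n-1}.
\]
Since $r^0 = p_X$, this gives \cref{eqn:iter-penalty-2param-error-prop-p}.

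For the velocity, $e^{n} = -\operator{A}_{1/\lambda}^{-1}\operator{D}^\star r^{n-1}$. Substituting $r^{n-1} = r^{n-2} - \rho\operator{D}e^{n-1}$ gives
\[
e^n = e^{n-1} - \rho\operator{A}_{1/\lambda}^{-1}\operator{D}^\star\operator{D}e^{n-1},
\]
using $-\operator{A}_{1/\lambda}^{-1}\operator{D}^\star r^{n-2} = e^{n-1}$. This holds for $n\ge2$, and induction yields \cref{eqn:iter-penalty-2param-error-prop-u}, provided $e^1 = u_X - u_{X,1/\lambda}$. That base case holds because $p^0=0$, so $u^1$ solves exactly \cref{eqn:saddle-discrete-penalty-decoupled-1} with $\epsilon=1/\lambda$.

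Finally, the claim $e^n\in\tilde{\discrete{Z}}$. For $z\in\discrete{Z}$ we have $\operator{D}z=0$, since $\operator{D}z\in\discrete{Q}$ is orthogonal to all of $\discrete{Q}$. Then $a(e^n,z) = a_{1/\lambda}(e^n,z) = -(\operator{D}z, r^{n-1})_Q = 0$. This is the step needing the most care: the membership must be checked against the definition of $\tilde{\discrete{Z}}$, which has $e^n$ in the first slot. That matches the identity $(\operator{A}_{1/\lambda}e^n)(z) = a_{1/\lambda}(e^n,z)$. The remaining work is bookkeeping of the index shift between the $u$ and $p$ formulas.
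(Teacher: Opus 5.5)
Your proposal follows the paper's proof essentially step for step. Well-definedness comes from \cref{thm:saddle-discrete-continuity} with $\epsilon = 1/\lambda$; your explicit reduction to injectivity of $\operator{A}_{1/\lambda}$, via $p = \lambda\operator{D}v \in \discrete{Q}$, correctly spells out the paper's one-line remark. Then come the error identity $\operator{A}_{1/\lambda} e^n = -\operator{D}^\star r^{n-1}$ together with a recurrence for $r^n$, and elimination to get the two operator recurrences. The base case $e^1 = u_X - u_{X,1/\lambda}$ follows from $p^0 = 0$. Your explicit check that $e^n \in \tilde{\discrete{Z}}$, testing with $z \in \discrete{Z}$ where $\operator{D} z = 0$, is a useful addition, since the paper leaves it implicit.

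There is one sign slip you need to fix. Because $\operator{R}_{\discrete{Q}}^{-1} G = \operator{D} u_X$, the update \cref{eqn:iter-penalty-2param-2} reads $p^n = p^{n-1} - \rho \operator{D} e^n$. Hence $r^n = r^{n-1} + \rho\operator{D} e^n$, not $r^{n-1} - \rho \operator{D} e^n$.

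With the minus sign as you wrote it, substituting $e^n = -\operator{A}_{1/\lambda}^{-1}\operator{D}^\star r^{n-1}$ would give $r^n = (\operator{I}_{\discrete{Q}} + \rho\operator{D}\operator{A}_{1/\lambda}^{-1}\operator{D}^\star) r^{n-1}$. Likewise it would give $e^n = e^{n-1} + \rho\operator{A}_{1/\lambda}^{-1}\operator{D}^\star\operator{D} e^{n-1}$. So both of your ``therefore'' steps are non sequiturs as written. The recurrences you actually display are the correct ones: they follow once the sign is corrected, and then the argument is complete.
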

\begin{proof}
    \Cref{thm:saddle-discrete-continuity} and 
    \cref{eqn:saddle-discrete-penalty-decoupled-1} show that $u_{X, 
        \lambda, \rho}^n$, and hence $p_{X, \lambda, \rho}^n$ are
        well-defined 
    provided $1/\lambda < \epsilon_0$. 
    For $n \in \mathbb{N}$, let 
    $e_{X}^n := u_{X} - u_{X, \lambda, \rho}^n$ 
    and $r_{X}^n:= p_{X} - p_{X,\lambda,\rho}^n$ 
    with $r_X^0 := p_X - p_{X,\lambda,\rho}^0 = p_X$. Direct calculation 
    shows that for all $n \geq 1$, there holds
    \begin{align}
        \label{eqn:proof:iter-penalty-2param-key-ids}
        a_{1/\lambda}(e_X^n, v) = -(\operator{D} v, r_X^{n-1})_Q \qquad 
        \forall v \in \discrete{V}, \quad 
        \text{and} \quad r_X^n = r_X^{n-1} + \rho \operator{D} e_X^n.
    \end{align}
    
    \noindent \textbf{Step 1: \Cref{eqn:iter-penalty-2param-error-prop-p}. 
    } 	In operator form, the relations in 
    \cref{eqn:proof:iter-penalty-2param-key-ids} give
    \begin{align}
        \label{eqn:proof:iter-penalty-2param-error-op-ids}
        e_X^{n} = -\operator{A}_{1/\lambda}^{-1} \operator{D}^{\star} 
        r^{n-1}_X \implies r_X^{n} = \left( \operator{I}_{\discrete{Q}} - 
        \rho \operator{D} 
        \operator{A}_{1/\lambda}^{-1} 
        \operator{D}^{\star} \right) r_X^{n-1} \qquad \forall n \in 
        \mathbb{N}.
    \end{align}
    \Cref{eqn:iter-penalty-2param-error-prop-p} now follows on noting that 
    $r_X^0 = p_X$. 
    
    \noindent \textbf{Step 2: \Cref{eqn:iter-penalty-2param-error-prop-u}. }
    Substituting the second relation in 
    \cref{eqn:proof:iter-penalty-2param-key-ids} into the first, we obtain
    \begin{align}
        \label{eqn:proof:iter-penalty-2param-en-en1-id}
        a_{1/\lambda}(e_X^n, v) &= -(\operator{D} v, r_X^{n-2})_Q - \rho 
        (\operator{D} e_X^{n-1}, \operator{D} v )_Q 
        = a_{1/\lambda}(e_X^{n-1}, v) - \rho(\operator{D} e_X^{n-1}, 
        \operator{D} v)_Q
    \end{align}
    for $n \geq 2$ and all $v \in \discrete{V}$. In operator form, we obtain
    \begin{align*}
        e_X^n = \left( \operator{I}_{\discrete{V}} - \rho 
        \operator{A}_{1/\lambda}^{-1} \operator{D}^\star \operator{D} \right) 
        e_X^{n-1}, \qquad n \geq 2.
    \end{align*}
        \Cref{eqn:iter-penalty-2param-error-prop-u} now follows on noting that 
        $u_{X, \lambda, \rho}^1 = u_{X, 1/\lambda}$.
\end{proof}

We now look at the special case that $\rho = \lambda$.
\begin{lemma}
    \label{lem:ip-error-prop-rho-lambda-same}
    Let $\lambda > 1/\epsilon_0$. Then, 
    $\operator{A}_{1/\lambda}^{-1} \operator{D}^\star \operator{D} : 
    \discrete{V} \to \tilde{\discrete{Z}}$ and $\operator{D} 
    \operator{A}_{1/\lambda}^{-1} \operator{D}^{\star} : \discrete{Q} \to 
    \discrete{Q}$, and there holds
    \begin{subequations}
        \label{eqn:error-prop-continuity}
        \begin{alignat}{2}
            \label{eqn:error-prop-continuity-u}
            \| (\operator{I}_{\discrete{V}} - \lambda 
            \operator{A}_{1/\lambda}^{-1} \operator{D}^\star \operator{D}) 
            \tilde{z} \|_{V} &\leq \frac{M_a}{\lambda} \min \left\{ 
            C_{X, 1/\lambda}^{[2]}, 
            \frac{1}{\beta_X} C_{X,1/\lambda}^{[3]} \right\} 
            \|\tilde{z}\|_V \qquad & &\forall 
            \tilde{z} \in \tilde{\discrete{Z}}, \\
            \label{eqn:error-prop-continuity-p}
            \| ( \operator{I}_{\discrete{Q}} - \lambda \operator{D} 
            \operator{A}_{1/\lambda}^{-1} \operator{D}^{\star} ) q 
            \|_Q &\leq \frac{1}{\lambda} \min\left\{ \frac{M_a}{\beta_X} 
            C_{X,1/\lambda}^{[2,3]}, C_{X,1/\lambda}^{[4]} \right\} \|q\|_Q 
            \qquad & &\forall q \in 
            \discrete{Q},
        \end{alignat}
    \end{subequations}
    where
    $C_{X,1/\lambda}^{[2,3]} = \min\{ M_{\operator{D}} 
    C_{X,1/\lambda}^{[2]}, C_{X,1/\lambda}^{[3]} \}$.
\end{lemma}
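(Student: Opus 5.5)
Set $\epsilon := 1/\lambda \in (0,\epsilon_0)$. By \cref{thm:saddle-discrete-continuity} the operator $\operator{A}_{\epsilon}$ is invertible. The plan is to recognize both error propagation operators as solution maps of a perturbed saddle point problem of the form \cref{proof:eqn:sadlle-perturb-error-mixed}, and then to invoke \cref{lem:saddle-perturb-error}.

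\textbf{Mapping properties.} For $v\in\discrete{V}$, set $w := \operator{A}_{\epsilon}^{-1}\operator{D}^\star \operator{D} v$. For $z \in \discrete{Z}$ we have $a_\epsilon(w,z) = a(w,z)$ and $(\operator{D} z, \operator{D} v)_Q = 0$, so $a(w,z)=0$ and hence $w\in\tilde{\discrete{Z}}$. The statement $\operator{D}\operator{A}_\epsilon^{-1}\operator{D}^\star:\discrete{Q}\to\discrete{Q}$ follows from the structure-preserving assumption $\operator{D}\discrete{V}=\discrete{Q}$.

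\textbf{Estimate \cref{eqn:error-prop-continuity-p}.} Fix $q \in \discrete{Q}$. Consider the auxiliary problem \cref{eqn:saddle-discrete} with data $F := -\operator{D}^\star q$ and $G := 0$. Its solution is $u_X = 0$ with $p_X$ determined by $\operator{D}^\star p_X = \operator{D}^\star q$, and injectivity of $\operator{D}^\star$ on $\discrete{Q}$ (surjectivity of $\operator{D}$) gives $p_X = q$. The penalized solution \cref{eqn:saddle-discrete-penalty-decoupled} is
\[
u_{X,\epsilon} = -\operator{A}_\epsilon^{-1}\operator{D}^\star q, \qquad p_{X,\epsilon} = \lambda \operator{D} u_{X,\epsilon} = -\lambda\operator{D}\operator{A}_\epsilon^{-1}\operator{D}^\star q .
\]
Therefore
\[
p_X - p_{X,\epsilon} = (\operator{I}_{\discrete{Q}} - \lambda\operator{D}\operator{A}_\epsilon^{-1}\operator{D}^\star)q,
\]
which is exactly the operator to be bounded. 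Now apply \cref{eqn:saddle-perturb-p-error}. Its first term gives $\epsilon C^{[4]}_{X,\epsilon}\|q\|_Q$. For the second term, combine $\frac{M_a}{\beta_X}\|\tilde z_X-\tilde z_{X,\epsilon}\|_V$ with \cref{eqn:saddle-perturb-tildez-error}; since $\tilde z_X=0$, this is bounded by $\frac{M_a}{\beta_X}\epsilon C^{[2,3]}_{X,\epsilon}\|q\|_Q$. Here one must check that the $C^{[2]}$ branch there uses $\|F\|_{\dual{\hat{\discrete{Z}}}}\le M_{\operator{D}}\|q\|_Q$, which is how the minimum $C^{[2,3]}$ arises.

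\textbf{Estimate \cref{eqn:error-prop-continuity-u}.} Fix $\tilde z\in\tilde{\discrete{Z}}$. Consider \cref{eqn:saddle-discrete} with data $F := \operator{A}\tilde z$ and $G := (\operator{D}\tilde z,\cdot)_Q$. Its solution is $u_X=\tilde z$, $p_X = 0$, and uniqueness follows from well-posedness. The penalized solution satisfies $a_\epsilon(u_{X,\epsilon},v) = a(\tilde z,v)+\lambda(\operator{D}\tilde z,\operator{D} v)_Q = a_\epsilon(\tilde z, v)$, so $u_{X,\epsilon}=\tilde z$ exactly. This is useless, so this choice is wrong, and the data must be modified. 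Instead take $G=0$, $F=\operator{A}\tilde z$. Then
\[
u_{X,\epsilon} = \operator{A}_\epsilon^{-1}\operator{A}\tilde z = \tilde z - \lambda\operator{A}_\epsilon^{-1}\operator{D}^\star\operator{D}\tilde z,
\]
which is precisely the quantity to bound, while the exact solution has $u_X\in\discrete{Z}$, and in fact $u_X = 0$. Indeed $F(z)=a(\tilde z,z)$, and this vanishes on $\discrete{Z}$ only if we use $\hat{\discrete Z}$. This is the \textbf{main obstacle}: matching $\tilde{\discrete{Z}}$ versus $\hat{\discrete{Z}}$. For $\tilde z\in\tilde{\discrete{Z}}$ we have $a(\tilde z,z)=0$ for $z\in\discrete{Z}$, so $F$ vanishes on $\discrete{Z}$. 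Hence $u_X=0$, $z_{X,\epsilon}=z_X=0$, and $u_{X,\epsilon}=\tilde z_{X,\epsilon}$. Then \cref{eqn:saddle-perturb-tildez-cont} with $G=0$ gives the bound $\epsilon C^{[2]}_{X,\epsilon}\|F\|_{\dual{\hat{\discrete Z}}}\le \epsilon C^{[2]}_{X,\epsilon}M_a\|\tilde z\|_V$. Alternatively, \cref{eqn:saddle-perturb-tildez-error} with $\|p_X\|_Q\le M_a\|\tilde z\|_V/\beta_X$ (from the inf-sup condition) gives the $C^{[3]}/\beta_X$ branch. Taking the minimum yields \cref{eqn:error-prop-continuity-u}.
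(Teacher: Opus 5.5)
Your proof is correct and follows essentially the paper's route. Both bounds come from identifying the error-propagation operators with (differences of) penalized saddle-point solution maps and invoking \cref{thm:saddle-discrete-continuity} and \cref{lem:saddle-perturb-error}. For \cref{eqn:error-prop-continuity-u} you use data $(\operator{A}\tilde{z}, 0)$, whose exact solution is $0$, where the paper uses $(0, \operator{R}_{\discrete{Q}}\operator{D}\tilde{z})$, whose exact solution is $\tilde{z}$. These are equivalent by linearity, because the data $(\operator{A}\tilde{z}, \operator{R}_{\discrete{Q}}\operator{D}\tilde{z})$ has solution $(\tilde{z}, 0)$ for every $\epsilon$. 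Your direct check that $\operator{A}_{1/\lambda}^{-1}\operator{D}^\star\operator{D}v \in \tilde{\discrete{Z}}$ replaces the paper's appeal to the theorem.

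There are two harmless slips. With $F = -\operator{D}^\star q$ one actually gets $p_X = -q$, so $p_X - p_{X,\epsilon} = -(\operator{I}_{\discrete{Q}} - \lambda\operator{D}\operator{A}_{1/\lambda}^{-1}\operator{D}^\star)q$, which has the same norm. Also, the aside that $a(\tilde{z},\cdot)$ vanishes on $\discrete{Z}$ ``only if we use $\hat{\discrete{Z}}$'' is wrong: membership in $\tilde{\discrete{Z}}$ is exactly what makes it vanish, as you then note, so there is no real obstacle there.
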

\begin{proof}
    For $\epsilon \geq 0$ as in \cref{thm:saddle-discrete-continuity}, let
    $\operator{S}_{\epsilon, \discrete{V}} : \dual{\discrete{V}} \times 
    \dual{\discrete{Q}} \to \discrete{V}$ and $\operator{S}_{\epsilon, 
        \discrete{Q}} : \dual{\discrete{V}} \times 
    \dual{\discrete{Q}} \to \discrete{Q}$ be defined by
    \begin{align*}
        \begin{pmatrix}
            \operator{S}_{\epsilon, \discrete{V}}(F, G)
            \\
            \operator{S}_{\epsilon, \discrete{Q}}(F, G)	
        \end{pmatrix}
        := 
        \begin{pmatrix}
            \operator{A} & \operator{D}^{\star} \\
            \operator{R}_{\discrete{Q}} \operator{D} & -\epsilon 
            \operator{R}_{\discrete{Q}}
        \end{pmatrix}^{-1} \begin{pmatrix}
            F \\ G
        \end{pmatrix} \qquad \forall F \in \dual{\discrete{V}}, \ \forall G 
        \in \dual{\discrete{Q}}.
    \end{align*}
    First note that
    \begin{align*}
        \operator{S}_{0, \discrete{V}}(0, \operator{R}_{\discrete{Q}} 
        \operator{D} \tilde{z}) = \tilde{z} \quad \forall \tilde{z} \in 
        \tilde{\discrete{Z}} 
        \quad \text{and} \quad
        \operator{S}_{0, \discrete{Q}}(\operator{D}^{\star} q, 0) 
        = 
        q \quad \forall q \in \discrete{Q}.
    \end{align*}     
    
    \noindent \textbf{Step 1: Inequality 
    \cref{eqn:error-prop-continuity-u}. } 
    \Cref{eqn:saddle-discrete-penalty-1} gives
    \begin{align*}
    \operator{S}_{1/\lambda, \discrete{V}}(F, G) = 
    \operator{A}_{1/\lambda}^{-1} (F + \lambda 
    \operator{D}^{\star} \operator{R}_{\discrete{Q}}^{-1} G),
    \end{align*}
    and so for any $v \in \discrete{V}$, $\lambda\operator{A}_{1/\lambda} 
    \operator{D}^{\star} \operator{D} v = 
    S_{1/\lambda, \discrete{V}}(0, \operator{R}_{\discrete{Q}} \operator{D} 
    v ) \in \tilde{\discrete{Z}}$ by \cref{thm:saddle-discrete-continuity}.

    Now let $\tilde{z} \in \tilde{\discrete{Z}}$. Then, there holds	
    \begin{align*}
        (\operator{I}_{\discrete{V}} - \lambda 
        \operator{A}_{1/\lambda}^{-1} \operator{D}^{\star} \operator{D})
        \tilde{z} 
        = 
        (\operator{S}_{0, \discrete{V}} - \operator{S}_{1/\lambda, 
            \discrete{V}})(0, \operator{R}_{\discrete{Q}} 
        \operator{D} \tilde{z}).
    \end{align*}
    Thanks to \cref{lem:saddle-perturb-error}, we have
    \begin{align*}
        \|(\operator{S}_{0, \discrete{V}} - \operator{S}_{1/\lambda, 
            \discrete{V}})(0, \operator{R}_{\discrete{Q}} 
        \operator{D} \tilde{z})\|_V \leq \frac{1}{\lambda} \min\left\{ 
        M_a C_{X, 1/\lambda}^{[2]}  \| \tilde{z} \|_V, 
        C_{X,1/\lambda}^{[2,3]} \| p_{\tilde{z}} \|_Q \right\},
    \end{align*}
    where 
    $p_{\tilde{z}} := \operator{S}_{0, \discrete{Q}}(0, 
    \operator{R}_{\discrete{Q}} \operator{D} \tilde{z})$. 
    The inf-sup condition \cref{eqn:b-inf-sup-discrete} and 
    \cref{eqn:saddle-discrete-1} give
    \begin{align*}
        \beta_X \|p_{\tilde{z}}\|_Q \leq \sup_{ v \in \discrete{V} } 
        \frac{(\operator{D} v, p_{\tilde{z}})_Q}{\|v\|_V} = \sup_{ v \in 
        \discrete{V} } \frac{a(\tilde{z}, v)}{\|v\|_V} \leq 
        \frac{M_a}{\beta_X} \|\tilde{z}\|_V,
    \end{align*}
    and so
    \begin{align*}
        \|(\operator{S}_{0, \discrete{V}} - \operator{S}_{1/\lambda, 
            \discrete{V}})(0, \operator{R}_{\discrete{Q}} 
        \operator{D} \tilde{z})\|_V \leq \frac{M_a}{\lambda} \min \left\{ 
            C_{X, 1/\lambda}^{[2]}, 
            \frac{1}{\beta_X} C_{X,1/\lambda}^{[3]} \right\} \| \tilde{z} \|_V.
    \end{align*}        
    
    \noindent \textbf{Step 2: Inequality 
    \cref{eqn:error-prop-continuity-p}. } 
    \Cref{eqn:saddle-discrete-penalty-decoupled-2} gives
    \begin{align*}
    \operator{S}_{\epsilon, \discrete{Q}}(F, G) &= \epsilon^{-1} \left( 
    \operator{D} \operator{A}_{\epsilon}^{-1}(F + \epsilon^{-1} 
    \operator{D}^{\star} \operator{R}_{\discrete{Q}}^{-1} G ) - 
    \operator{R}_{\discrete{Q}}^{-1} G \right) \\
    &= \epsilon^{-1} \operator{D} \operator{A}_{\epsilon}^{-1} F + 
    \epsilon^{-1} \left( \epsilon^{-1}  
    \operator{D} \operator{A}_{\epsilon}^{-1} \operator{D}^{\star}  - 
    \operator{I}_{\discrete{Q}} \right) 
    \operator{R}_{\discrete{Q}}^{-1} G,
    \end{align*}
    and so for $q \in \discrete{Q}$, there holds
    \begin{align*}
    (\operator{I}_{\discrete{Q}} -
    \lambda  
    \operator{D} \operator{A}_{1/\lambda}^{-1} \operator{D}^{\star})q = 
    -\frac{1}{\lambda} \operator{S}_{1/\lambda, \discrete{Q}}(0, 
    \operator{R}_{\discrete{Q}} 
    q) = (\operator{S}_{0, \discrete{Q}} - 
    \operator{S}_{1/\lambda, \discrete{Q}}) (\operator{D}^{\star} 
    q, 0).
    \end{align*}
    Applying \cref{thm:saddle-discrete-continuity} gives
    \begin{align*}
        \|(\operator{I}_{\discrete{Q}} -
        \lambda  
        \operator{D} \operator{A}_{1/\lambda}^{-1} 
        \operator{D}^{\star})q\|_Q \leq \frac{1}{\lambda} C_{X, 
        1/\lambda}^{[4]} \|q\|_Q,
    \end{align*}
    while \cref{lem:saddle-perturb-error} gives
    \begin{align*}
        \|(\operator{I}_{\discrete{Q}} -
        \lambda  
        \operator{D} \operator{A}_{1/\lambda}^{-1} 
        \operator{D}^{\star})q\|_Q \leq \min\left\{ \frac{C_{X, 
                1/\lambda}^{[4]}}{\lambda}  \|q\|_Q, \frac{M_a}{\beta_X} \| 
        (\operator{S}_{0, \discrete{V}} - 
        \operator{S}_{1/\lambda, \discrete{V}}) (\operator{D}^{\star} 
        q, 0) \|_V \right\}.
    \end{align*}
    Note that $S_{0, \discrete{V}}(\operator{D}^{\star} q, 0) = 0$ since 
    $\operator{D}^{\star} q$ vanishes on $\discrete{Z}$, and so 
    \cref{lem:saddle-perturb-error} gives
    \begin{align*}
        \| (\operator{S}_{0, \discrete{V}} - 
        \operator{S}_{1/\lambda, \discrete{V}}) (\operator{D}^{\star} 
        q, 0) \|_V \leq \frac{1}{\lambda} \min\left\{  
        C_{X,1/\lambda}^{[2]} \|\operator{D}^{\star} 
        q\|_{\dual{\hat{\discrete{Z}}}}, C_{X,1/\lambda}^{[2,3]} \|q\|_Q  
        \right\}.
    \end{align*}
    Since $\|D^{\star} q\|_{\dual{\check{\discrete{Z}}}} \leq 
    M_{\operator{D}} \|q\|_Q$, we obtain
    \begin{align*}
        \| (\operator{S}_{0, \discrete{V}} - 
        \operator{S}_{1/\lambda, \discrete{V}}) (\operator{D}^{\star} 
        q, 0) \|_V \leq \frac{1}{\lambda} C_{X,1/\lambda}^{[2,3]} \|q\|_Q.
    \end{align*}
\end{proof}			

Based on Lemma \ref{lem:ip-error-prop-rho-lambda-same},
we derive convergence of the iterated penalty 
method \cref{eqn:iter-penalty-2param} for the case $\lambda \neq \rho$.
\begin{theorem}
    \label{thm:iter-penalty-2param-error}		
    Let $\lambda, \rho > 0$, $u_X, u_{X, \lambda, \rho}^{n} \in 
    \discrete{V}$, and $p_X, p_{X, \lambda, \rho}^n \in \discrete{Q}$ be 
    as in \cref{lem:iter-penalty-2param-error-prop}. Define
    \begin{align}
        \label{eqn:iter-penalty-2param-error-eta-tau}
        \eta_{\lambda, \rho} := \left| 1 - \frac{\rho}{\lambda} \right| 
        \quad \text{and} \quad \tau_{X, \lambda, \rho} := \frac{\rho 
        M_a}{\lambda^2} 
        \min \left\{ C_{X, 1/\lambda}^{[2]}, 
        \frac{1}{\beta_X} C_{X,1/\lambda}^{[3]} \right\} 
        + \eta_{\lambda, \rho}.
    \end{align}
    Then, there holds
    \begin{subequations}
        \label{eqn:iter-penalty-2param-error-e1-agen}
        \begin{align}
            \label{eqn:iter-penalty-2param-u-error-e1-agen}
            \|u_X - u_{X, \lambda, \rho}^n\|_V 
            &\leq \tau_{X, \lambda, \rho}^{n-1} 
                \|u_X - u_{X, 1/\lambda}\|_V, \\
            \label{eqn:iter-penalty-2param-p-error-e1-agen}
            \|p_X - p_{X, \lambda, \rho}^n\|_Q 
            &\leq \frac{\rho M_a}{\lambda \beta_X} 
            \sum_{i=1}^{n}  \eta_{\lambda, \rho}^{n-i} 
                \tau_{X, \lambda, \rho}^{i-1} \|u_X - u_{X, 1/\lambda}\|_V
            + \eta_{\lambda, \rho}^n \|p_X\|_Q.
        \end{align}	
    \end{subequations}
    Alternatively, if we define
    \begin{align}
        \label{eqn:iter-penalty-2param-error-xi}
        \xi_{X, \lambda, \rho} := \frac{\rho}{\lambda^2} \min\left\{ 
        \frac{M_a}{\beta_X} 
        C_{X,1/\lambda}^{[2,3]}, C_{X,1/\lambda}^{[4]} \right\} + 
        \eta_{\lambda, \rho},
    \end{align}
    where $C_{X,1/\lambda}^{[2,3]} = \min\{ M_{\operator{D}} 
    C_{X,1/\lambda}^{[2]}, C_{X,1/\lambda}^{[3]} \}$,
    then there holds
    \begin{subequations}
    \label{eqn:iter-penalty-2param-error-px-agen}
    \begin{align}
        \label{eqn:iter-penalty-2param-u-error-px-agen}
        \|u_X - u_{X, \lambda, \rho}^n\|_V &\leq \frac{1}{\lambda} 
        C_{X,1/\lambda}^{[2,3]} \xi_{X, \lambda, \rho}^{n-1} \|p_X\|_Q,  \\
        \label{eqn:iter-penalty-2param-p-error-px-agen}
        \|p_X - p_{X, \lambda, \rho}^n\|_Q &\leq \xi_{X, \lambda, \rho}^n 
        \|p_X\|_{Q}.
    \end{align}	
    \end{subequations}
\end{theorem}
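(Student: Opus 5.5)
Throughout, write $\operator{T}_{\discrete{V}} := \lambda\operator{A}_{1/\lambda}^{-1}\operator{D}^\star\operator{D}$ and $\operator{T}_{\discrete{Q}} := \lambda\operator{D}\operator{A}_{1/\lambda}^{-1}\operator{D}^\star$. The plan rests on a splitting of the two error propagation operators in \cref{lem:iter-penalty-2param-error-prop}:
\begin{align*}
\operator{I}_{\discrete{V}} - \rho\operator{A}_{1/\lambda}^{-1}\operator{D}^\star\operator{D}
&= \tfrac{\rho}{\lambda}\bigl(\operator{I}_{\discrete{V}} - \operator{T}_{\discrete{V}}\bigr) + \bigl(1-\tfrac{\rho}{\lambda}\bigr)\operator{I}_{\discrete{V}}, \\
\operator{I}_{\discrete{Q}} - \rho\operator{D}\operator{A}_{1/\lambda}^{-1}\operator{D}^\star
&= \tfrac{\rho}{\lambda}\bigl(\operator{I}_{\discrete{Q}} - \operator{T}_{\discrete{Q}}\bigr) + \bigl(1-\tfrac{\rho}{\lambda}\bigr)\operator{I}_{\discrete{Q}}.
\end{align*}

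\textbf{Step 1: the $\|p_X\|_Q$-based estimates.} Apply the triangle inequality to the second splitting and use \cref{eqn:error-prop-continuity-p}. This gives an operator norm on $\discrete{Q}$ bounded by $\xi_{X,\lambda,\rho}$. Iterating \cref{eqn:iter-penalty-2param-error-prop-p} then yields \cref{eqn:iter-penalty-2param-p-error-px-agen}.

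For the velocity, use the identity $e_X^n = -\operator{A}_{1/\lambda}^{-1}\operator{D}^\star r_X^{n-1}$ from the proof of \cref{lem:iter-penalty-2param-error-prop}. The function $-\operator{A}_{1/\lambda}^{-1}\operator{D}^\star q$ is exactly the error $u_X - u_{X,1/\lambda}$ of the penalty problem whose unpenalized pressure is $q$. Indeed, the pair $(0,q)$ solves \cref{eqn:saddle-discrete} with data $(\operator{D}^\star q, 0)$, and $\operator{A}_{1/\lambda}^{-1}\operator{D}^\star q$ is the penalized velocity for the same data. So \cref{lem:saddle-perturb-error} gives
\[
\|\operator{A}_{1/\lambda}^{-1}\operator{D}^\star q\|_V \le \lambda^{-1} C_{X,1/\lambda}^{[2,3]}\|q\|_Q.
\]
Combining this with the bound on $\|r_X^{n-1}\|_Q$ gives \cref{eqn:iter-penalty-2param-u-error-px-agen}.

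\textbf{Step 2: the velocity estimate \cref{eqn:iter-penalty-2param-u-error-e1-agen}.} First check that $e_X^n \in \tilde{\discrete{Z}}$ for every $n$; this follows from \cref{lem:iter-penalty-2param-error-prop}, or from \cref{lem:ip-error-prop-rho-lambda-same}, which shows that $\operator{T}_{\discrete{V}}$ maps into $\tilde{\discrete{Z}}$, so the propagator preserves $\tilde{\discrete{Z}}$. On $\tilde{\discrete{Z}}$, the first splitting together with \cref{eqn:error-prop-continuity-u} bounds the propagator by
\[
\frac{\rho}{\lambda}\cdot\frac{M_a}{\lambda}\min\Bigl\{C_{X,1/\lambda}^{[2]},\ \beta_X^{-1}C_{X,1/\lambda}^{[3]}\Bigr\} + \eta_{\lambda,\rho} = \tau_{X,\lambda,\rho}.
\]
Iterating $n-1$ times starting from $e_X^1 = u_X - u_{X,1/\lambda}$ gives the claim.

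\textbf{Step 3: the pressure estimate \cref{eqn:iter-penalty-2param-p-error-e1-agen}.} Start from the recursion $r_X^n = r_X^{n-1} + \rho\operator{D}e_X^n$. Write $r_X^{n-1} = -S_{0,\discrete{Q}}$ applied to the residual; more concretely, use the fact that $\operator{T}_{\discrete{Q}}r$ is determined by the velocity error. Then rewrite the recursion as
\[
r_X^n = \bigl(1-\tfrac{\rho}{\lambda}\bigr)r_X^{n-1} + \tfrac{\rho}{\lambda}\bigl(\operator{I}_{\discrete{Q}} - \operator{T}_{\discrete{Q}}\bigr)r_X^{n-1}.
\]
The second term is controlled through the velocity: by the second part of \cref{eqn:saddle-perturb-p-error}, as used in Step 2 of the proof of \cref{lem:ip-error-prop-rho-lambda-same},
\[
\|(\operator{I}_{\discrete{Q}} - \operator{T}_{\discrete{Q}})r\|_Q \le \frac{M_a}{\beta_X}\bigl\|\lambda\operator{A}_{1/\lambda}^{-1}\operator{D}^\star r\bigr\|_V.
\]
Applied with $r = r_X^{n-1}$, the right-hand side is at most $(M_a/\beta_X)\,\lambda\|e_X^n\|_V$. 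I must check this by direct inf-sup: the pressure difference satisfies $(\operator{D}v,\cdot)_Q = a(\cdot,v)$ against a velocity. The resulting recursion is
\[
\|r_X^n\|_Q \le \eta_{\lambda,\rho}\|r_X^{n-1}\|_Q + \frac{\rho M_a}{\lambda\beta_X}\|e_X^n\|_V.
\]
Unrolling it with $\|r_X^0\|_Q = \|p_X\|_Q$ and inserting Step 2, $\|e_X^i\|_V \le \tau_{X,\lambda,\rho}^{i-1}\|e_X^1\|_V$, gives the stated sum.

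\textbf{Main obstacle.} The delicate point is exactly the identity in Step 3. I need $(\operator{I}_{\discrete{Q}} - \operator{T}_{\discrete{Q}})r_X^{n-1}$ expressed through $e_X^n$ with constant $M_a/\beta_X$ and no stray factor of $\lambda$. I would verify it directly from \cref{eqn:proof:iter-penalty-2param-key-ids}. Set $s := r_X^{n-1} + \lambda\operator{D}e_X^n$. The first key identity gives $(\operator{D}v, s)_Q = -a(e_X^n, v)$ for all $v \in \discrete{V}$. Then the inf-sup condition \cref{eqn:b-inf-sup-discrete} gives $\beta_X\|s\|_Q \le M_a\|e_X^n\|_V$. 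Since $r_X^n = r_X^{n-1} + \rho\operator{D}e_X^n = (1-\rho/\lambda)r_X^{n-1} + (\rho/\lambda)s$, the recursion above follows immediately.
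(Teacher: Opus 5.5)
Your proposal is correct and follows the paper's proof essentially step for step. It uses the same $\rho/\lambda$ splitting of the propagators combined with \cref{eqn:error-prop-continuity-u,eqn:error-prop-continuity-p}, and the same bound $\|e_X^n\|_V = \|\operator{A}_{1/\lambda}^{-1}\operator{D}^{\star} r_X^{n-1}\|_V \le \lambda^{-1} C_{X,1/\lambda}^{[2,3]} \|r_X^{n-1}\|_Q$. You invoke \cref{lem:saddle-perturb-error} for that bound where the paper applies \cref{thm:saddle-discrete-continuity} with two choices of data, but it is the same estimate. It also uses the same recursion $\|r_X^n\|_Q \le \eta_{\lambda,\rho}\|r_X^{n-1}\|_Q + \frac{\rho M_a}{\lambda\beta_X}\|e_X^n\|_V$, obtained from the first identity in \cref{eqn:proof:iter-penalty-2param-key-ids} plus the inf-sup condition. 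You apply inf-sup to $s = r_X^{n-1} + \lambda\operator{D}e_X^n$, while the paper tests with $w$ satisfying $\operator{D}w = r_X^n$; these are equivalent.

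The only blemish is the intermediate display in your Step 3 involving $\lambda\operator{A}_{1/\lambda}^{-1}\operator{D}^{\star} r$, which carries a spurious factor $\lambda$. The bound from \cref{lem:saddle-perturb-error} is $\frac{M_a}{\beta_X}\|\operator{A}_{1/\lambda}^{-1}\operator{D}^{\star} r\|_V$, i.e.\ $\frac{M_a}{\beta_X}\|e_X^n\|_V$ for $r = r_X^{n-1}$. Your direct verification in the final paragraph is the correct argument and should simply replace that display.
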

\begin{proof}
    \noindent \textbf{Step 1: Inequality 
    \cref{eqn:iter-penalty-2param-u-error-e1-agen}. } Applying the 
    identity		
    \begin{align*}
        \operator{I}_{\discrete{V}} - \rho 
        \operator{A}_{1/\lambda}^{-1} \operator{D}^* \operator{D} = 
        \frac{\rho}{\lambda} \left( \operator{I} - \lambda 
        \operator{A}_{1/\lambda}^{-1} \operator{D}^* \operator{D}\right) + 
        \left( 
        1 - \frac{\rho}{\lambda} \right) \operator{I}_{\discrete{V}}
    \end{align*}
    gives
    \begin{align*}
        \| (\operator{I}_{\discrete{V}} - \rho 
        \operator{A}_{1/\lambda}^{-1} \operator{D}^* 
        \operator{D})\tilde{z}\|_V &\leq  \tau_{X, \lambda, \rho} 
        \|\tilde{z}\|_V \qquad \forall \tilde{z} \in \tilde{\discrete{Z}}.
    \end{align*}
    Inequality \cref{eqn:iter-penalty-2param-u-error-e1-agen} now follows 
    from \cref{eqn:error-prop-continuity-u}.
    
    \noindent \textbf{Step 2: Inequality 
    \cref{eqn:iter-penalty-2param-p-error-e1-agen}. }  
    \Cref{eqn:proof:iter-penalty-2param-key-ids} gives
    \begin{align*}
        (r^n_X, \operator{D} v)_Q 
        &= (r_X^{n-1}, \operator{D} v)_Q 
            + \rho (\operator{D} e_X^n, \operator{D} v)_Q 
        = -\frac{\rho}{\lambda} a(e_X^n, v) 
            + \left( 1 - \frac{\rho}{\lambda} \right) 
                (r_X^{n-1}, \operator{D} v)_Q,
    \end{align*}
    where we recall that $e_X^n := u_X - u_{X, \lambda, \rho}^n$ and 
    $r_X^n := p_X - p_{X, \lambda, \rho}^n$. 
    Thanks to \cref{eqn:b-inf-sup-discrete}, there exists $w \in 
    \discrete{V}$ such that $\operator{D} w = r_X^n$ and $\|w\|_{V} \leq 
    \beta_X^{-1} \|r_X^n\|_Q$, and so choosing $v = w$ above gives
    \begin{align}
        \label{eqn:proof:p-error-by-e-error}
        \|r_X^n\|_Q \leq \frac{\rho M_a}{\lambda \beta_X} \|e_X^n\|_V 
            + \left| 1 - \frac{\rho}{\lambda} \right| \|r_X^{n-1}\|_Q.
    \end{align} 
    By induction, we obtain
    \begin{align*}
        \|r_X^n\|_Q \leq \frac{\rho M_a}{\lambda \beta_X} 
        \sum_{i=1}^{n}  
        \left| 
        1 - \frac{\rho}{\lambda} \right|^{n-i} \|e_X^i\|_V 
        + \left| 
        1 - \frac{\rho}{\lambda} \right|^n \|r_X^0\|_Q,
    \end{align*}
    and \cref{eqn:iter-penalty-2param-p-error-e1-agen} now follows on 
    recalling that $r_X^0 = p_X$.
    
    \noindent \textbf{Step 3: 
    \cref{eqn:iter-penalty-2param-p-error-px-agen}. } Inequality 
    \cref{eqn:iter-penalty-2param-p-error-px-agen} follows from analogous 
    arguments as the proof of 
    \cref{eqn:iter-penalty-2param-u-error-e1-agen} on using 
    \cref{eqn:error-prop-continuity-p} in place of 
    \cref{eqn:error-prop-continuity-u}.
    
    \noindent \textbf{Step 4: 
    \cref{eqn:iter-penalty-2param-u-error-px-agen}. } Recall from
    \cref{eqn:proof:iter-penalty-2param-error-op-ids} that $e_X^n = 
    -\operator{A}_{1/\lambda}^{-1} \operator{D}^{\star} r_X^{n-1}$. 
    Taking $F = \operator{D}^{\star} r_X^{n-1}$ and $G = 0$ or 
    $F = 0$ and $G = \lambda^{-1} \operator{R}_{\discrete{Q}} r_X^{n-1}$  
    in \cref{thm:saddle-discrete-continuity} gives
    \begin{align*}
        \|e_X^n\|_V \leq \frac{1}{\lambda} M_{\operator{D}} C_{X, 
        1/\lambda}^{[2]} \|r_X^{n-1}\|_Q
        \quad \text{and} \quad \|e_X^n\|_V \leq \frac{1}{\lambda} C_{X, 
        1/\lambda}^{[3]} \|r_X^{n-1}\|_Q. 
    \end{align*}
    Inequality \cref{eqn:iter-penalty-2param-u-error-px-agen} now follows.
\end{proof}

Note that
\begin{align*}
    \sum_{i=1}^{n} \eta_{\lambda, \rho}^{n-i} 
    \tau_{X, \lambda, \rho}^{i-1} 
    = \frac{\tau_{X, \lambda, \rho}(\tau_{X, \lambda, \rho}^n 
        - \eta_{\lambda, \rho}^n)}{\tau_{X, \lambda, \rho} 
        - \eta_{\lambda, \rho}},
\end{align*}
and so \cref{thm:iter-penalty-2param-error} shows that if $\eta_{\lambda, 
\rho} < 1$ and either $\xi_{X, \lambda, \rho} < 1$ 
or $\tau_{X, \lambda, \rho} < 1$, then the iterated penalty 
method converges at a geometric rate. A common choice in practice is 
$\lambda = \rho$, and the next result expands these conditions in this 
case.		
    
\begin{corollary}
    \label{cor:iter-penalty-conv}
    Let $\rho \in \mathbb{R}$, $u_X, u_{X, \rho, \rho}^n \in 
    \discrete{V}$, 
    and $p_X, p_{X, \rho, \rho}^n \in \discrete{Q}$, $n \in \mathbb{N}$ 
    be as 
    in \cref{thm:iter-penalty-2param-error}. Define
    \begin{align}
        \label{eqn:rho0-def}
        \rho_0 := \frac{1}{\epsilon_0} \left(1 + 
        \frac{1}{\Upsilon_X}\right) 
        = \frac{(M_a + \alpha_X)(M_a + 2\alpha_X)M_a }{ \alpha_X^2 
        \beta_X^2 }.
    \end{align}
    If $\rho > \rho_0$, then the iterated penalty method 
    \cref{eqn:iter-penalty-2param} with $\lambda = \rho$ converges 
    geometrically with a 
    rate given by
    \begin{align}
        \label{eqn:iter-penalty-geo-rate-conv}
        \frac{1}{\Upsilon_X(\rho \epsilon_0 - 
        1)}
        = \frac{\rho_0}{\Upsilon_X(\rho - \rho_0) + \rho} 
        = \frac{M_a \alpha_X(\alpha_X + M_a)}{\rho \alpha_X^2 \beta_X^2 - 
        M_a(\alpha_X + M_a)^2}.
    \end{align}
    If \ref{asmp:a-sym-nonneg-coercive-z} or \ref{asmp:a-coercive-all} 
    holds, then for all $\rho > 0$ the 
    iterated penalty method \cref{eqn:iter-penalty-2param} 
    with $\lambda = \rho$ converges geometrically with a rate given by
    \begin{align}
        \label{eqn:iter-penalty-geo-rate-conv-asspd}
        \frac{M_a}{M_a + \rho \beta_X^2} \quad \text{if 
        \ref{asmp:a-sym-nonneg-coercive-z} holds} 
        \quad \text{or} \quad
        \frac{M_a \Upsilon_X}{M_a \Upsilon_X + \rho \beta_X^2} \quad 
        \text{if \ref{asmp:a-coercive-all} holds}.
    \end{align}
\end{corollary}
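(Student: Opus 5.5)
The plan is to specialize \cref{thm:iter-penalty-2param-error} to $\lambda = \rho$. Then $\eta_{\rho,\rho} = 0$, so that
\begin{align*}
    \xi_{X,\rho,\rho} = \frac{1}{\rho} \min\left\{ \frac{M_a}{\beta_X} C_{X,1/\rho}^{[2,3]}, C_{X,1/\rho}^{[4]} \right\} \leq \frac{1}{\rho} C_{X,1/\rho}^{[4]}.
\end{align*}
By \cref{eqn:iter-penalty-2param-error-px-agen}, the pressure error is bounded by $\xi_{X,\rho,\rho}^n \|p_X\|_Q$. The velocity error is bounded by $\rho^{-1} C_{X,1/\rho}^{[2,3]} \xi_{X,\rho,\rho}^{n-1}\|p_X\|_Q$. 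So it suffices to check three things: the iterates are well-defined ($\rho > 1/\epsilon_0$, from \cref{lem:iter-penalty-2param-error-prop}), the quantity $\rho^{-1}C_{X,1/\rho}^{[4]}$ equals the claimed rate, and this rate is $<1$. I will use only the $C^{[4]}$ branch of the minimum. The $\tau$-route with $\eta = 0$ would only give the weaker rate $1/(\rho\epsilon_0 - 1)$, so I will not use it.

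\textbf{General case.} Take $\epsilon = 1/\rho$ in the first column of \cref{tab:saddle-perturb-cont-const}. Since $M_a \Upsilon_X^2 \epsilon_0 = \beta_X^2$ by \cref{eqn:eps0-def}, we get
\begin{align*}
    \frac{1}{\rho}C_{X,1/\rho}^{[4]} = \frac{M_a \Upsilon_X}{\rho \beta_X^2}\cdot \frac{\rho\epsilon_0}{\rho\epsilon_0 - 1} = \frac{M_a\Upsilon_X \epsilon_0}{\beta_X^2(\rho\epsilon_0-1)} = \frac{1}{\Upsilon_X(\rho\epsilon_0-1)}.
\end{align*}
This is valid whenever $\rho\epsilon_0 > 1$. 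The rate is less than $1$ iff $\Upsilon_X(\rho\epsilon_0 - 1) > 1$, i.e. $\rho > \epsilon_0^{-1}(1 + \Upsilon_X^{-1}) = \rho_0$. This condition automatically implies $\rho > 1/\epsilon_0$, so the iteration is also well-defined.

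The remaining identities in \cref{eqn:rho0-def} and \cref{eqn:iter-penalty-geo-rate-conv} are algebra. In the general case $\Upsilon_X = 1 + M_a/\alpha_X = (\alpha_X + M_a)/\alpha_X$, hence $\epsilon_0 = \alpha_X^2\beta_X^2/(M_a(\alpha_X+M_a)^2)$ and $1 + \Upsilon_X^{-1} = (M_a + 2\alpha_X)/(M_a+\alpha_X)$. Multiplying gives the stated closed form of $\rho_0$. The two alternative expressions for the rate follow by writing $\epsilon_0^{-1} = \rho_0 \Upsilon_X/(\Upsilon_X + 1)$ and clearing denominators, respectively substituting the explicit $\epsilon_0$ and $\Upsilon_X$.

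\textbf{Cases \ref{asmp:a-sym-nonneg-coercive-z} and \ref{asmp:a-coercive-all}.} Here $\epsilon_0 = \infty$, so the iteration is well-defined for every $\rho > 0$.
\begin{itemize}
    \item Under \ref{asmp:a-sym-nonneg-coercive-z}, the table gives $C_{X,1/\rho}^{[4]} = M_a/(M_a/\rho + \beta_X^2)$, so $\rho^{-1}C_{X,1/\rho}^{[4]} = M_a/(M_a + \rho\beta_X^2)$.
    \item Under \ref{asmp:a-coercive-all}, $C_{X,1/\rho}^{[4]} = M_a\Upsilon_X/(M_a\Upsilon_X/\rho + \beta_X^2)$, which yields $M_a\Upsilon_X/(M_a\Upsilon_X + \rho\beta_X^2)$.
\end{itemize}
Both are strictly less than $1$ for every $\rho > 0$ since $\beta_X > 0$. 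This gives \cref{eqn:iter-penalty-geo-rate-conv-asspd}, with both velocity and pressure converging geometrically by \cref{eqn:iter-penalty-2param-error-px-agen}.

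I do not expect a genuine obstacle. The only delicate points are choosing the $\xi$/$C^{[4]}$ estimate rather than the $\tau$ estimate, and the algebraic bookkeeping linking $\rho_0$, $\epsilon_0$ and $\Upsilon_X$ in the general case.
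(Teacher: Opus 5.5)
Your proposal is correct and follows essentially the paper's proof. With $\lambda=\rho$ you get $\eta_{\rho,\rho}=0$ and $\xi_{X,\rho,\rho}\le \rho^{-1}C_{X,1/\rho}^{[4]}$; evaluating this quantity from \cref{tab:saddle-perturb-cont-const} in each column gives the stated rates, and $\rho>\rho_0$ is exactly the condition that the general-case rate is below one.

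One small correction to your aside, which does not affect your argument. In the general case the minimum defining $\tau_{X,\rho,\rho}$ is attained by the $\beta_X^{-1}C_{X,1/\rho}^{[3]}$ branch (since $\Upsilon_X\ge 1$), so $\tau_{X,\rho,\rho}=\rho^{-1}C_{X,1/\rho}^{[4]}$ and the $\tau$-route gives the same rate. Only the $C_{X,1/\rho}^{[2]}$ branch yields $1/(\rho\epsilon_0-1)$.
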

\begin{proof} 
    For general $a(\cdot,\cdot)$, we have	
    \begin{align*}
        C_{X, 1/\rho}^{[4]} = \frac{M_a}{\beta_X} C_{X, 1/\rho}^{[3]} = 
        \frac{M_a}{\Upsilon_X} C_{X, 1/\rho}^{[2]} \leq M_a C_{X, 
            1/\rho}^{[2]} \leq \frac{M_a M_{\operator{D}}}{\beta_X}
        C_{X, 1/\rho}^{[2]},
    \end{align*}
    and so 
    $\xi_{X, \rho, \rho} = C_{X,1/\rho}^{[4]} / \rho 
    \leq \tau_{X, \rho, \rho}$. Consequently, 
    \cref{thm:iter-penalty-2param-error} ensures that the iterated penalty
    method converges geometrically provided that 
    $C_{X,1/\rho}^{[4]} < \rho$ at a rate $C_{X, 1/\rho}^{[4]} / \rho$,
    and a better rate cannot be obtained with 
    \cref{thm:iter-penalty-2param-error}. This condition reads
    \begin{align*}
        \frac{M_a \Upsilon_X}{\beta_X^2} \left( 
        \frac{\epsilon_0}{\epsilon_0 - \rho^{-1}} \right) < \rho \iff 
        \rho > \rho_0 \quad \text{and} \quad \frac{C_{X, 
                1/\rho}^{[4]}}{\rho} = \frac{\rho_0}{\Upsilon_X(\rho - 
                \rho_0) 
            + \rho}.
    \end{align*}
    If $a(\cdot,\cdot)$ satisfies \ref{asmp:a-sym-nonneg-coercive-z}
    or \ref{asmp:a-coercive-all}, then $\xi_{X, \rho, \rho} \leq 
    C_{X,1/\rho}^{[4]}/\rho$, and so
    \begin{align*}
        \xi_{X, \rho, \rho} \leq \frac{M_a}{M_a + \rho \beta_X^2} 
        \quad \text{if \ref{asmp:a-sym-nonneg-coercive-z} holds}
        \quad \text{or} \quad
        \xi_{X, \rho, \rho} \leq \frac{M_a \Upsilon_X}{M_a \Upsilon_X + 
        \rho \beta_X^2} \quad 
        \text{if \ref{asmp:a-coercive-all} holds}.
    \end{align*}
    In both cases, $\xi_{X, \rho, \rho} < 1$, which completes the proof.
\end{proof}

\begin{remark}
    Note that the constraint 
    $\rho > \rho_0 =\frac{1}{\epsilon_0}(1+\Upsilon_X^{-1})$ 
    required for convergence of the 
    iterated penalty method in \cref{cor:iter-penalty-conv} is slightly 
    more strict than the condition $\rho > 1/\epsilon_0$ for the 
    well-posedness of the penalized problem in 
    \cref{thm:saddle-discrete-continuity}.
\end{remark}
        
\begin{remark}
    The extra conditions 
    \ref{asmp:a-sym-nonneg-coercive-z}, \ref{asmp:a-coercive-all}, and
    \ref{asmp:a-sym-coercive-all} are non-exhaustive. For example, if 
    $\operator{A}$ is maximally rank deficient in the sense that
    $\operator{A} \tilde{\discrete{Z}} = \{ 0 \}$, then 
    \cref{eqn:iter-penalty-2param} is well-defined and converges in two 
    iterations for all $\lambda, \rho > 0$ as we now show.
    
    We first show that the iterates are well-defined. Suppose that
    $u \in \discrete{V}$ satisfies $a_{1/\lambda}(u, v) = 0$ for all 
    $v \in \discrete{V}$. According to \cref{lem:v-z-tildez-decomp} below,
    $u = z + \tilde{z}$ for some $z \in \discrete{Z}$ and 
    $\tilde{z} \in \tilde{\discrete{Z}}$. 		
    Taking $v \in \discrete{Z}$ and applying 
    \cref{eqn:a-infsup-kernel-discrete} shows that $z \equiv 0$. Thus,
    $\lambda \|\mathcal{D}\tilde{z}\|_{Q}^2 = 0$, and so $\tilde{z} \equiv 0$
    by \cref{lem:tilde-z-div-continuity} below. Thus, 
    $\operator{A}_{1/\lambda}$ is invertible for any $\lambda > 0$
    and the iterates \cref{eqn:iter-penalty-2param} are well-defined.
    
    We now turn to convergence. Clearly, the error of the first iterate 
    $e_{X}^1$ and $r_{X}^1$ satisfy 
    \cref{proof:eqn:sadlle-perturb-error-mixed} with 
    $\epsilon = \lambda^{-1}$. Since $e_{X}^1 \in \tilde{\discrete{Z}}$,
    \cref{proof:eqn:sadlle-perturb-error-mixed-1} shows that
    \begin{align*}
        (r_X^1,  \operator{D} v)_Q = -a(e_X^1, v) = 0 
            \qquad \forall v \in \discrete{V},
    \end{align*}
    and so $r_X^1 \equiv 0$ by \cref{eqn:b-inf-sup-discrete}. Thus,
    $e_X^{n+1} \equiv 0$ and $r_X^n \equiv 0$ for all $n \in \mathbb{N}$ by 
    \cref{eqn:proof:iter-penalty-2param-error-op-ids}. In particular,
    \cref{eqn:iter-penalty-2param} converges in exactly two iterations.
\end{remark}

\begin{remark}
    \label{rem:results-valid-v-q}
    All of the theoretical results in 
    \cref{sec:iterated-penalty,sec:penalty-method}
    and in \cref{sec:penalty-well-posedness-proofs} below remain 
    valid if we replace $\discrete{V}$, $\discrete{Q}$, 
    and $\discrete{Z}$ with their 
    possibly infinite-dimensional counterparts $V$, $Q$, and $Z$
    and if we replace the discrete inf-sup constants 
    $\alpha_X$ \cref{eqn:a-infsup-kernel-discrete} and 
    $\beta_X$ \cref{eqn:b-inf-sup-discrete} with the continuous counterparts
    $\alpha$ \cref{eqn:a-inf-sup-kernel} and 
    $\beta$ \cref{eqn:b-inf-sup}.
\end{remark}

\subsection{Stopping criteria}		

With the convergence of the iterated penalty method quantified by 
\cref{cor:iter-penalty-conv}, we now turn to a suitable stopping 
criterion. We will continue to focus on the case  $\lambda = \rho$, and
we simplify notation by setting 
$u_{X, \rho}^n := u_{X, \rho, \rho}^n$ 
and $p_{X, \rho}^n := p_{X, \rho, \rho}^n$.
One common option in the literature is to use the residual 
of the constraint \cref{eqn:saddle-discrete-2}:
\begin{align}
    \| \operator{D} u_{X, \rho}^{n} - \operator{R}_{\discrete{Q}}^{-1}G\|_Q 
    = \| \operator{D} u_{X, \rho}^{n} - \operator{D} u_{X}\|_Q 
    \leq M_{\operator{D}} \|u_{X, \rho}^{n} - u_X\|_{V},
\end{align}
which converges to zero at the same geometric rate in 
\cref{eqn:iter-penalty-geo-rate-conv,eqn:iter-penalty-geo-rate-conv-asspd}.
Thanks to 
\cref{thm:iter-penalty-2param-error}, ${u_X - u_{X, \rho}^{n} \in 
\tilde{\discrete{Z}}}$, and so 
\cref{eqn:tilde-z-div-continuity} below and
\cref{eqn:proof:p-error-by-e-error} give
\begin{align}
    \label{eqn:iter-penalty-1param-error-by-residual}
    \|u_{X, \rho}^{n} - u_X\|_V &\leq \frac{\Upsilon_X}{\beta_X} \| 
    \operator{D} 
    u_{X, \rho}^{n} - \operator{R}_{\discrete{Q}}^{-1} G\|_Q
    \quad \text{and} \quad 
    \| p_{X, \rho}^{n} - p_X \|_Q \leq 
    \frac{M_a \Upsilon_X}{\beta_X^2} \| \operator{D} 
    u_{X, \rho}^{n} - \operator{R}_{\discrete{Q}}^{-1} G\|_Q.
\end{align}	
Thus, provided $M_a \Upsilon_X/\beta_X^2$ is not too large, the stopping 
criterion
\begin{align}
    \label{eqn:residual-stopping-criteria}
    \| \operator{D} u_{X, \rho}^{n} - 
    \operator{R}_{\discrete{Q}}^{-1} G\|_Q < \texttt{tol}
\end{align}
for a prescribed 
tolerance $\texttt{tol}$ is reasonable. We shall adopt this stopping 
criterion for the remainder of the manuscript. Other possible choices for 
stopping criteria could be 
$\|u_{X, \rho}^{n} - u_{X, \rho}^{n-1}\|_V < \texttt{tol}$ and/or
$\|p_{X, \rho}^{n} - p_{X, \rho}^{n-1}\|_Q < \texttt{tol}$.

\subsection{Implementation}

Let $\rho > 0$ be as in \cref{cor:iter-penalty-conv} so that  
\cref{eqn:iter-penalty-2param} (with $\lambda=\rho$) is well-posed and 
converges to the 
solution to \cref{eqn:saddle-discrete}. At first 
glance, it is not immediately apparent that the iterated penalty method
\cref{eqn:iter-penalty-2param} can be implemented
more easily than the original mixed problem \cref{eqn:saddle-discrete}.
Most notably, we require an iterate $p_{X, \rho}^{n} \in \discrete{Q}$, 
but we generally do not have a basis for $\discrete{Q}$. Further 
complicating matters is that for $n \in \mathbb{N}$, computing $p_{X, 
\rho}^{n}$ and the stopping criterion 
\cref{eqn:residual-stopping-criteria} requires 
$\operator{R}_{\discrete{Q}}^{-1} G$, which is also 
generally not available without a basis for $\discrete{Q}$. Thus, we 
seek to eliminate the apparent need for a basis for	$\discrete{Q}$.

\begin{lemma}
    \label{lem:iter-penalty-u-computable}
    Consider the following algorithm. Let $\rho > 0$ and $\discrete{V} \ni 
    w_{X, \rho}^{0} := 0$. For $n \in \mathbb{N}$, define $\bar{u}_{X, 
        \rho}^{n}, w_{X, \rho}^n \in \discrete{V}$ by		
    \begin{subequations}
        \label{eqn:iter-penalty-decoupled-computable}
        \begin{alignat}{2}
            \label{eqn:iter-penalty-decoupled-computable-1}
            a(\bar{u}_{X, \rho}^{n}, v) 
            + \rho (\operator{D} \bar{u}_{X, \rho}^{n}, \operator{D} v)_Q 
            &= F(v) 
            - (\operator{D} v, \operator{D} w_{X, \rho}^{n-1})_Q 
            + n \rho G(\operator{D} v) 
            \qquad 
            & &\forall v \in \discrete{V}, \\
            \label{eqn:iter-penalty-decoupled-computable-2}
            w_{X, \rho}^{n} &= w_{X, \rho}^{n-1} 
            + \rho \bar{u}_{X, \rho}^{n}. 
            \qquad & &
        \end{alignat}
    \end{subequations}	
    Then, $u_{X, \rho}^{n} = \bar{u}_{X, \rho}^{n}$ and 
    $p_{X, \rho}^{n} =  \operator{D} w_{X, \rho}^n - n\rho 
    \operator{R}_{\discrete{Q}}^{-1} G$ for all $n \in \discrete{N}$.
\end{lemma}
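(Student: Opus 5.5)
We argue by induction on $n$, proving simultaneously the two identities $\bar{u}_{X,\rho}^n = u_{X,\rho}^n$ and $p_{X,\rho}^n = \operator{D} w_{X,\rho}^n - n\rho\,\operator{R}_{\discrete{Q}}^{-1}G$. Throughout, $\lambda=\rho$, so the iterates of \cref{eqn:iter-penalty-2param} are well-defined by \cref{lem:iter-penalty-2param-error-prop}. The key structural fact is that both \cref{eqn:iter-penalty-2param-1} and \cref{eqn:iter-penalty-decoupled-computable-1} have the same left-hand side operator $\operator{A}_{1/\rho}$, which is invertible. Hence it suffices to show that the right-hand sides agree as functionals on $\discrete{V}$.

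For the base case $n=0$, note that $w_{X,\rho}^0=0$ and $p_{X,\rho}^0=0$. The formula for $p$ therefore holds trivially, since $0 = \operator{D} 0 - 0$.

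For the inductive step, assume $p_{X,\rho}^{n-1} = \operator{D} w_{X,\rho}^{n-1} - (n-1)\rho\,\operator{R}_{\discrete{Q}}^{-1}G$. Substituting this into the right-hand side of \cref{eqn:iter-penalty-2param-1}, we use the identity $(\operator{D} v, \operator{R}_{\discrete{Q}}^{-1}G)_Q = G(\operator{D} v)$, which is valid because $\operator{D} v \in \discrete{Q}$ by the structure-preserving assumption. This gives
\begin{align*}
F(v) + \rho G(\operator{D} v) - (\operator{D} v, \operator{D} w_{X,\rho}^{n-1})_Q + (n-1)\rho\, G(\operator{D} v),
\end{align*}
which is exactly the right-hand side of \cref{eqn:iter-penalty-decoupled-computable-1}. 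Invertibility of $\operator{A}_{1/\rho}$ then yields $\bar{u}_{X,\rho}^n = u_{X,\rho}^n$.

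Next, from \cref{eqn:iter-penalty-2param-2} and the inductive hypothesis,
\begin{align*}
p_{X,\rho}^n = \operator{D} w_{X,\rho}^{n-1} - (n-1)\rho\,\operator{R}_{\discrete{Q}}^{-1}G + \rho\,\operator{D} u_{X,\rho}^n - \rho\,\operator{R}_{\discrete{Q}}^{-1}G.
\end{align*}
Using $u_{X,\rho}^n=\bar{u}_{X,\rho}^n$ and \cref{eqn:iter-penalty-decoupled-computable-2}, this equals $\operator{D} w_{X,\rho}^n - n\rho\,\operator{R}_{\discrete{Q}}^{-1}G$, which closes the induction.

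There is no real obstacle in this argument. The only points needing care are the two justifications already noted. First, the replacement of $(\operator{D} v,\operator{R}_{\discrete{Q}}^{-1}G)_Q$ by $G(\operator{D} v)$ requires $\operator{D}\discrete{V}\subseteq\discrete{Q}$. Second, uniqueness of $\bar u^n_{X,\rho}$ relies on the invertibility of $\operator{A}_{1/\rho}$, for which we assume $\rho>1/\epsilon_0$ as in \cref{lem:iter-penalty-2param-error-prop}.
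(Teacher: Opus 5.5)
Your proof is correct and follows the paper's argument: an induction that uses $(\operator{D} v, \operator{R}_{\discrete{Q}}^{-1}G)_Q = G(\operator{D} v)$ to match the right-hand sides, invertibility of $\operator{A}_{1/\rho}$ to conclude $\bar{u}_{X,\rho}^n = u_{X,\rho}^n$, and \cref{eqn:iter-penalty-2param-2} together with \cref{eqn:iter-penalty-decoupled-computable-2} to carry the pressure formula forward. The only difference is cosmetic: you start the induction at the trivial case $n=0$, whereas the paper checks $n=1$ separately.
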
	
\begin{proof}
    We proceed by induction. For $n=1$, we have
    \begin{align*}
        a(\bar{u}_{X, \rho}^{1}, v) 
        + \rho (\operator{D} \bar{u}_{X, \rho}^{1}, \operator{D} v)_Q 
        &= F(v) + \rho G(\operator{D} v) \qquad \forall v \in 
        \discrete{V},  
    \end{align*}
    and so $u_{X, \rho}^{1} = \bar{u}_{X, \rho}^{1}$ since 
    \cref{eqn:saddle-discrete-penalty-decoupled-1} is well-posed. Then,
    \begin{align*}
        \operator{D} w_{X, \rho}^{1} = \rho \operator{D} \bar{u}_{X, 
        \rho}^{1} = \rho \operator{D} u_{X, \rho}^{1} = p_{X, \rho}^{1} 
        + \rho \operator{R}_{\discrete{Q}}^{-1} G, 
    \end{align*}
    and so the result is true for $n=1$. Suppose it holds for some $n 
    \in \mathbb{N}$. Then,
    \begin{align*}
        a(\bar{u}_{X, \rho}^{n+1}, v) 
        + \rho (\operator{D} \bar{u}_{X, \rho}^{n+1}, \operator{D} v)_Q 
        &= F(v) 
        - (\operator{D} v, \operator{D} w_{X, \rho}^{n})_Q 
        + (n+1) \rho G(\operator{D} v) \\
        &=  F(v) 
        - (\operator{D} v, p_{X, \rho}^n)_Q 
        + \rho G(\operator{D} v)
    \end{align*}
    for all $v \in \discrete{V}$. By well-posedness, $\bar{u}_{X, 
    \rho}^{n+1} = u_{X, \rho}^{n+1}$. Moreover,
    \begin{align*}
        \operator{D} w_{X, \rho}^{n+1} 
        = \operator{D} w_{X, \rho}^{n} + 
        \rho \operator{D} \bar{u}_{X, \rho}^{n+1} 
        = p_{X, \rho}^{n} + \rho \operator{D} u_{X, \rho}^{n+1} + n \rho 
        \operator{R}_{\discrete{Q}}^{-1} G 
        = p_{X, \rho}^{n+1} + (n+1)\rho 
        \operator{R}_{\discrete{Q}}^{-1} G,
    \end{align*}
    which completes the proof.
\end{proof}
\noindent Thanks to \cref{lem:iter-penalty-u-computable}, we can 
compute $u_{X, \rho}^n$ by iterating 
\cref{eqn:iter-penalty-decoupled-computable} which only requires a 
basis for $\discrete{V}$. Of course, we are implicitly assuming that we 
can compute $(\operator{D} u, \operator{D} v)_{Q}$ for all 
$u, v \in \discrete{V}$, which is the case for all of the examples in 
\cref{sec:key-examples}.

However, computing $p_{X, \rho}^n$ and the stopping criterion 
\cref{eqn:residual-stopping-criteria} requires 
$\operator{R}_{\discrete{Q}}^{-1} G$. If $G = 0$, then
${\operator{R}_{\discrete{Q}}^{-1} G = 0}$ and 
\cref{lem:iter-penalty-u-computable} shows that $p_{X, \rho}^{n} = 
\operator{D} w_{X, \rho}^n$. Thus, $\operator{D} w_{X, \rho}^n \approx 
p_X$ is fully computable, as is the stopping criterion $\|\operator{D} 
u_{X, \rho}^n\|_{Q} < \texttt{tol}$. 
If $G \neq 0$, then we can consider the 
following discrete system: 	Find $\zeta_X \in \discrete{V}$ and $g_X 
\in \discrete{Q}$ such that
\begin{subequations}
    \label{eqn:q-inverse-reisz-by-saddle}
    \begin{alignat}{2}
        (\zeta_X, v)_V + (\operator{D} v, g_X)_Q &= G(\operator{D} v) 
        \qquad & 
        &\forall v \in \discrete{V}, \\
        \label{eqn:q-inverse-reisz-by-saddle-2}
        (\operator{D} \zeta_X, q)_Q &= 0 \qquad & &\forall q \in 
        \discrete{Q}.
    \end{alignat}
\end{subequations}
Then, we see that $g_X = \mathcal{R}_{\discrete{Q}}^{-1} G$ by 
choosing $v \in \discrete{Z}^{\perp} := \{ v \in \discrete{V} : (v, 
    z)_V = 0 \ \forall z \in \discrete{Z} \}$ and noting $\operator{D} 
    \discrete{Z}^{\perp} = \discrete{Q}$. Consequently, we can 
    approximate $g_X$ with the iterated penalty method applied to 
    \cref{eqn:q-inverse-reisz-by-saddle}. In particular, the approximation 
    after $m$ iterations $g_{X, \rho}^{m}$ is of the form 
    $\operator{D} y_{X, \rho}^{m}$, 
    for some $y_{X, \rho}^{m} \in \discrete{V}$, 	   
    which is computable using \cref{eqn:iter-penalty-decoupled-computable} 
    since the right-hand side of 
    \cref{eqn:q-inverse-reisz-by-saddle-2} is zero. Then, we can use
    $g_{X, \rho}^m$ in place of $\mathcal{R}_{\discrete{Q}}^{-1} G$
    in \cref{lem:iter-penalty-u-computable}; i.e. we take
    $p_{X, \rho}^{n, m} := \operator{D} w_{X, \rho}^{n} - n \rho g_{X, 
    \rho}^m$  to be the approximation to $p_{X}$. Moreover, we can use 
    $\| \operator{D} u_{X, \rho}^n - g_{X, \rho}^m\|_Q < \texttt{tol}$ 
    as a sufficient approximation to the stopping criterion in 
    \cref{eqn:residual-stopping-criteria}.
    This procedure is summarized in \cref{alg:iter-penalty-computable}. 

\begin{algorithm}[htb]
    \caption{A computable iterated penalty method for 
        \cref{eqn:saddle-discrete}}
    \label{alg:iter-penalty-computable}
    \begin{algorithmic}[1]
        \Require{$\rho > 0$ and $\texttt{tol2} > \texttt{tol1} > 0$
            \vphantom{$\big[$}}

        \If{$G = 0$\vphantom{$[$}}
        
        \State{$u_{X, \rho}^n$, $w_{X,\rho}^{n}\gets$ 
            \Call{PartialIPM}{$a(\cdot,\cdot)$, $F(\cdot)$, $G(\cdot)$, 
            $\rho$, 0, 
            \texttt{tol2}} 
            \vphantom{$\bigg[$}}
        
        \State{\Return $u_{X, \rho}^n$ and $p_{X, 
                \rho}^{n} := \operator{D} w_{X, 
                \rho}^n$}
        
        \Else
        
        \State{$\zeta_{X, \rho}^{m}$, 
            $y_{X, \rho}^m \gets$
            \Call{PartialIPM}{%
                $(\cdot,\cdot)_V$, 
                $G(\operator{D}\cdot)$, 
                0, $\rho$, 0, \texttt{tol1}}\vphantom{$\big[$}}
            
        \State{$g_{X, \rho}^m \gets \operator{D} y_{X, \rho}^m$ 
        \vphantom{$\bigg[$}}
        
        \State{$u_{X, \rho}^n$, $w_{X,\rho}^{n}\gets$ 
            \Call{PartialIPM}{$a(\cdot,\cdot)$, $F(\cdot)$, $G(\cdot)$, 
            $\rho$, 
            $g_{X, \rho}^m$, \texttt{tol2}} 
            \vphantom{$\big[$}}
        
        \State{\Return $u_{X, \rho}^n$ and $\bar{p}_{X, 
                \rho}^{n, m}:= \operator{D} w_{X, 
                \rho}^n - n \rho g_{X, \rho}^m$ \vphantom{$\bigg[$}}
        
        \EndIf

        \Statex{}

        \Function{PartialIPM}{$\mathfrak{a}(\cdot,\cdot)$,
            $\mathcal{F}(\cdot)$, $\mathcal{G}(\cdot)$, $\rho$, $g_X$, 
            \texttt{tol}}
        \State{Set $\omega_{X, \rho}^0 := 0$.}
        \For{$n = 1, 2, \ldots, $}
        \State{Find $\mu_{X, \rho}^{n} \in \discrete{V}$ 
            such that}
        \State{$\mathfrak{a}(\mu_{X, \rho}^{n}, v) 
            + \rho (\operator{D} \mu_{X, \rho}^{n}, 
            \operator{D} v)_Q 
            = \mathcal{F}(v) 
            - (\operator{D} v, \operator{D} \omega_{X, 
                \rho}^{n-1})_Q 
            + n \rho \mathcal{G}(\operator{D} v) \quad \forall 
            v \in 
            \discrete{V}$ }
        \State{Set $\omega_{X, \rho}^n := \omega_{X, 
                \rho}^{n-1} + \rho \mu_{X, \rho}^{n}$} 
        \If{$\| \operator{D} \mu_{X, \rho}^{n} - g_X\|_Q < \texttt{tol}$ }
        \State{\Return $\mu_{X, \rho}^{n}$ and $\omega_{X, 
                \rho}^{n}$}
        \EndIf
        \EndFor
        \EndFunction	 		
    \end{algorithmic}
\end{algorithm}

\begin{remark}	
    Note that we have two stopping tolerances in 
    \cref{alg:iter-penalty-computable}: a tolerance for iterated 
    penalty applied to \cref{eqn:q-inverse-reisz-by-saddle} to compute 
    $\operator{R}_{\discrete{Q}}^{-1} G$, labeled \texttt{tol1},
    and a tolerance for iterated penalty applied to the original saddle 
    point system \cref{eqn:saddle-discrete}, labeled \texttt{tol2}. 
    If $G \neq 0$, then we have
    \begin{align*}
        \| \operator{D} u_{X, \rho}^n - g_{X, \rho}^m\|_Q 
        &\geq \left| \| \operator{D} u_{X, \rho}^n - 
        \operator{R}_{\discrete{Q}}^{-1} G\|_Q - \|g_{X, \rho}^m - 
        \operator{R}_{\discrete{Q}}^{-1} G\|_Q \right| 
        = \left| \| \operator{D} u_{X, \rho}^n - 
        \operator{R}_{\discrete{Q}}^{-1} G\|_Q - \texttt{tol1} \right|,
    \end{align*} 
    where the first term on the final line converges to zero. Thus, we 
    must choose $\texttt{tol2} > \texttt{tol1}$ for 
    \cref{alg:iter-penalty-computable} to terminate.
\end{remark}

\begin{remark}
    If $G(\cdot) = (g_X, \cdot)_Q$ for some readily available $g_X \in 
    \discrete{Q}$, then $g_X = \operator{R}_{\discrete{Q}}^{-1} G$ can be 
    used in place of $g_{X, \rho}^m$ and lines 5 and 6 of 
    \cref{alg:iter-penalty-computable} can be removed. 
\end{remark}

\section{Numerical examples}
\label{sec:numerics}

We now revisit the key examples in \cref{sec:key-examples} to demonstrate
the performance of the iterated penalty method. All of the examples are 
coded using Firedrake \cite{FiredrakeUserManual} and every linear system
encountered is solved with MUMPS \cite{amestoy2001} 
via PETSc \cite{balay2025}. The code is available at \cite{zenodo}.
    
\begin{remark}
    For large problems, particularly in 3D, direct solvers are not a 
    feasible option, and iterative solvers must be used for
    \cref{eqn:iter-penalty-2param-1}. 
    For $\lambda \gg 1$, the linear system \cref{eqn:iter-penalty-2param-1}
    is ill-conditioned and preconditioners are necessary. 
    Developing preconditioners for nearly singular problems like 
    \cref{eqn:iter-penalty-2param-1}
    is delicate \cite{lee2007,schoberl1999}.
    Preconditioners robust
    in the discretization parameters and $\lambda$ are available
    for the Hodge decomposition problem in \cref{sec:numerics-hodge} 
    \cite{ArnoldFalkWinther00,BrubeckFarrell24,Brubecketal25,Hiptmair97,Hiptmair98,%
        HiptmairSchiekoferWohlmuth96,HiptmairToselli00,PaznerKolevDohrmann23}
    and for some Scott--Vogelius discretizations of the incompressible flow
    problem in \cref{sec:numerics-incompressible} 
    \cite{AinsworthParker24lep,FarrellMitchellWechsung19,%
        FarrellMitchellScottWechsung22,LeeWuChen09}.
\end{remark}

\subsection{Hodge decompositions of a discrete de Rham complex}
\label{sec:numerics-hodge}

To demonstrate the effectiveness of \cref{alg:iter-penalty-computable}
with nonzero $G(\cdot)$ and $a(\cdot,\cdot)$ satisfying 
\ref{asmp:a-sym-nonneg-coercive-z}, 
we return to computing Hodge decompositions as described in 
\cref{sec:hodge}.
For $p \geq 1$, consider the following discrete de Rham complex:
\begin{equation}
    \label{eqn:discrete-de-rham}
    \begin{tikzcd}
        0 \arrow{r} & \mathrm{CG}_p  \arrow{r}{\grad} & \mathrm{Ned}_p^1 
        \arrow{r}{\curl} 
        & \mathrm{RT}_p \arrow{r}{\div} & \mathrm{DG}_{p-1} \arrow{r} & 0.
    \end{tikzcd}
\end{equation}
Here, $\mathrm{CG}_p$ is the standard space of continuous piecewise 
polynomials of degree $p$ equipped with the $H^1(\Omega)$ inner product; 
$\mathrm{Ned}_p^1$ is the space of degree $p$ N\'{e}d\'{e}lec elements of 
the first kind equipped with the $\hcurl$ inner product; $\mathrm{RT}_p$ is 
the space of degree $p$ Raviart--Thomas elements equipped with the $\hdiv$ 
inner product; and $\mathrm{DG}_{p-1}$ is the space of discontinuous 
piecewise polynomials of degree $p-1$ equipped with the $L^2(\Omega)$ inner
product.

We consider the problem of computing the Hodge decomposition of a vector 
field in $\mathrm{Ned}_p^1$ and in $\mathrm{RT}_p$, $p \in \{1,2,3\}$, as 
defined in \cref{eqn:hodge-decomp}. For vector fields in 
$\mathrm{Ned}_p^1$, we use a coarse 96 element tetrahedral mesh of a box 
with two tunnels: 
\begin{align*}
    \Omega_{\mathrm{Ned}} := (0, 2) \times (0, 1)^2 \setminus \left( [1/4, 
    3/4]^2 \times [0,1] 
    \cup [5/4, 7/4]^2 \times [0, 1]  \right).
\end{align*}
For vector fields in $\mathrm{RT}_p$, we use a coarse 145 element 
tetrahedral mesh of a box with two voids:
\begin{align*}
    \Omega_{\mathrm{RT}} := (0, 2) \times (0, 1)^2 \setminus \left( [1/4, 
    3/4]^3  
    \cup [5/4, 7/4]^2 \times [1/4, 3/4]  \right).
\end{align*}
The domains are chosen so that the first Betti number of 
$\Omega_{\mathrm{Ned}}$ and the second Betti number of 
$\Omega_{\mathrm{RT}}$ is 2. Thus, the dimension of the space of 
harmonic forms in $\mathrm{Ned}_p^1$ on $\Omega_{\mathrm{Ned}}$ and in 
$\mathrm{RT}_p$ on $\Omega_{\mathrm{RT}}$ is 2 
(see e.g.~\cite[Section 4.3]{Arnold18}).
For each domain and polynomial degree, we generate a random vector field by
choosing the degrees of freedom to be uniformly distributed in 
$[-0.5, 0.5]$. We have verified numerically that the resulting fields have 
a nontrivial harmonic form component.

We compute the Hodge decomposition as described in 
\cref{sec:hodge}. In particular, we solve the two saddle point problems 
\cref{eqn:hodge-potential-mixed,eqn:hodge-dk-mixed}. Both of these 
saddle point problems satisfy \ref{asmp:a-sym-nonneg-coercive-z} with 
$\tilde{\alpha}_X = M_a = 1$ and $M_{\operator{D}} = 1$. 
We choose $\texttt{tol1} = 2 \cdot 10^{-12}$, $\texttt{tol2} = 
10^{-11}$, and $\rho = 10^3$ in \cref{alg:iter-penalty-computable}.
We only compute $\operator{R}_{\discrete{Q}}^{-1} G$ for 
\cref{eqn:hodge-potential-mixed}, as 
$\operator{R}_{\discrete{Q}}^{-1} G = \dee^k u$ in 
\cref{eqn:hodge-dk-mixed} is already available.
The results are displayed in \cref{tab:hodge-results}. We see 
that the number of iterations of the iterated penalty method solves 
in lines 5 and 7 of \cref{alg:iter-penalty-computable} never exceeds 5 
iterations, which is consistent with the geometric rate of convergence 
guaranteed by \cref{cor:iter-penalty-conv}. Moreover, we observe in the 
final three columns of \cref{tab:hodge-results} that the computed 
components of the Hodge decomposition are $L^2(\Omega)$, and hence 
$(\cdot,\cdot)_{\discrete{V}_k}$, orthogonal up to the solver tolerances.

\begin{table}[htb]
    \centering
    \caption{Numerical results for the Hodge decomposition problem of 
    vector fields in the discrete de Rham complex 
    \cref{eqn:discrete-de-rham}. The ``$\sigma_{\perp}$ iters.'' columns
    show the iterated penalty iteration counts for solving 
    \cref{eqn:hodge-potential-mixed}, with the 
    left (right) subcolumn indicating the number of iterations for line 5 
    (line 7) of \cref{alg:iter-penalty-computable}. The 
    ``$u_{\perp}$ iters.'' column shows the iteration counts for line 7 of 
    \cref{alg:iter-penalty-computable} applied to \cref{eqn:hodge-dk-mixed}. 
    The final three columns record the $L^2(\Omega)$ inner product of 
    the three components of the computed Hodge decomposition, where we use 
    the notation $\dee := \dee^{k-1}$. Note that these three $L^2(\Omega)$ 
    inner products are analytically equivalent to  the corresponding 
    $(\cdot,\cdot)_{\discrete{V}^k}$ inner product since
    at least one argument is $\dee^k$-free.}
    \label{tab:hodge-results}
    \begin{tabular*}{\textwidth}{@{\extracolsep\fill} c  c | c c | c | c c c @{\extracolsep\fill}}
        Space & $p$ & \multicolumn{2}{c|}{$\sigma_{\perp}$ iters.} & 
        $u_{\perp}$ iters. & $(\harmonic{h}, \dee 
        \sigma_{\perp})_{L^2(\Omega)}$ & 
        $(\harmonic{h}, u_{\perp})_{L^2(\Omega)}$ & 
        $(\dee \sigma_{\perp}, u_{\perp})_{L^2(\Omega)}$ \\
        \hline
        \multirow{3}{*}{$\mathrm{Ned}_p^1$} 
        & 1 & 4 & 3 & 3 & $3.196 \times 10^{-13}$ & $6.242 \times 10^{-14}$ 
                        & $4.342 \times 10^{-16}$ \rule{0pt}{0.9\normalbaselineskip}  \\
        & 2 & 5 & 4 & 3 & $2.354 \times 10^{-15}$ & $3.514 \times 10^{-14}$ 
                        & $-2.675 \times 10^{-15}$ \\
        & 3 & 5 & 4 & 3 & $5.815 \times 10^{-15}$ & $2.700 \times 10^{-13}$ 
                        & $-1.118 \times 10^{-13}$ \\
        \hline
        \multirow{3}{*}{$\mathrm{RT}_p$} 
        & 1 & 5 & 3 & 3 & $4.022 \times 10^{-13}$ & $1.131 \times 10^{-14}$ 
                        & $-2.792 \times 10^{-15}$ \rule{0pt}{0.9\normalbaselineskip} \\
        & 2 & 5 & 3 & 3 & $1.208 \times 10^{-11}$ & $6.078 \times 10^{-14}$ 
                        & $-3.513 \times 10^{-13}$ \\
        & 3 & 5 & 3 & 3 & $1.720 \times 10^{-11}$ & $7.266 \times 10^{-14}$ 
                        & $1.408 \times 10^{-13}$
    \end{tabular*}
\end{table}

\subsection{Fourth-order wave equation}
\label{sec:numerics-kirchhoff}

We now turn to an example where $a(\cdot,\cdot)$ is indefinite.
Let $\Omega \subset \mathbb{R}^2$ be the L-shaped domain ${(0, 1)^2 
\setminus [0.5, 1]^2}$ with three
holes in \cref{fig:kirchhoff-min-under}
and let $\discrete{W}$ with $k = 5$ be as in \cref{sec:fourth-order}.
We take $\Gamma_c = \emptyset$, $\Gamma_s$ to be the outer boundary
of the L-shape, and $\Gamma_f$ to be the union of the boundaries of 
the holes. Consider the following discrete fourth-order wave equation
subject to an oscillating point load at the mesh vertex $x_0 = (0.66, 
0.33)$ with frequency $\omega$: 
Find $\phi_X(t) \in \discrete{W}$ such that
\begin{align*}
    \frac{\dee^2}{\dee t^2} (\phi_X(t), v)_{L^2(\Omega)} 
    + a_1(\grad \phi_X(t), \grad v) 
    = \sin(\omega t) v(x_0) 
    \qquad \forall v \in \discrete{W},
\end{align*}
where $a_1(\vecbd{\theta}, \vecbd{\psi}) := 
(\symgrad{\vecbd{\theta}}, \symgrad{\vecbd{\theta}})_{L^2(\Omega)^2}
+ (\div \vecbd{\theta}, \div \vecbd{\psi})_{L^2(\Omega)}$, subject to
appropriate initial conditions.
Ignoring physical constants, this system resembles a $C^1$-spline 
semi-discretization of a dynamic Kirchhoff plate
simply-supported on $\Gamma_s$ \cite{LagneseLions88}.

We seek a solution of the form $\phi_X(t) = \sin(\omega t) w_X$ with 
$w_X \in \discrete{W}$ which satisfies
\begin{align}
    \label{eqn:harmonic-kirchhoff}
    w_X \in \discrete{W} : \qquad a_1(\grad w_X, \grad v) - \omega^2 (w_X, 
    v)_{L^2(\Omega)} = v(z) \qquad \forall v \in \discrete{W}.
\end{align}
Problem \cref{eqn:harmonic-kirchhoff} is well-posed provided that 
$\omega$ is not an eigenvalue of the following problem: 
Find $z \in \discrete{W}$ and $\lambda \in \mathbb{R}_+$ such that
\begin{align}
    \label{eqn:h2-eigenvalue-problem}
    a_1(\grad z, \grad v) = \lambda^2 (z, v)_{L^2(\Omega)} \qquad 
    \forall v \in \discrete{W}.
\end{align}
In particular, $\gamma_X$ defined in \cref{eqn:h2-a-infsup} and 
subsequently $\alpha_X$ \cref{eqn:a-infsup-kernel-discrete} converge
to zero as $\omega$ converges to an eigenvalue $\lambda$. Thus, if we 
solve \cref{eqn:harmonic-kirchhoff} by applying the method described in 
\cref{sec:fourth-order}, taking 
\begin{align*}
    a((u_0, \vecbd{u}_1), (v_0, \vecbd{v}_1)) := -\omega^2 (u_0, 
    v_0)_{L^2(\Omega)} + 
    a_1(\vecbd{u}_1, \vecbd{v}_1)
\end{align*}
for all $(u_0, \vecbd{u}_1), (v_0, 
    \vecbd{v}_1) \in \discrete{V} = \discrete{X}\times \discrete{Y}$
with $\discrete{X},\discrete{Y}$ defined in \cref{eqn:DXDY},
then we expect $\rho_0$, the lower bound for $\rho$ necessary for 
well-posedness of the iterated penalty method, to blow up
as $\omega$ approaches a generalized eigenvalue. Consequently, for a 
fixed penalty parameter $\rho$, we expect the number of iterations 
to be large if $\omega$ is close to a generalized eigenvalue.

We now solve \cref{eqn:harmonic-kirchhoff} using the method in 
\cref{sec:fourth-order} with the iterated penalty method with
${\lambda = \rho = 5 \times 10^4}$ and $\texttt{tol2} = 10^{-12}$. 
First, we choose 
$\omega^2 = \tau \lambda_{\min}^2$ for 
${\tau \in \{ 0.9, 0.95, 0.975, 1.025, 1.05, 1.1 \}}$, where
$\lambda_{\min}^2 \approx 4787$ has been computed using a
suitable power method iteration.
The corresponding iteration counts were $\{6, 8, 11, 10, 8, 7\}$ and
the solutions for $\tau \in \{0.975, 1.025\}$ are displayed in 
\cref{fig:kirchoff-min-under-over}. As expected, the iteration counts
increase as $\omega$ approaches $\lambda_{\min}$. We also observe
a sign flip and reflection about $y=x$ in the solution from $\omega < 
\lambda_{\min}$ to
$\omega > \lambda_{\min}$. Second, we choose 
$\omega^2 = 227^2 \tau$ with the same values of $\tau$. The corresponding
iteration counts were $\{ 22, 13, 228, 11, 9, 8 \}$ and the solutions
for $\tau \in \{0.975, 1.025\}$ are displayed in 
\cref{fig:kirchhoff-other-under-over}. Since we again observe an increase
of iteration counts as $\omega$ approaches $227$ and a sign flip and 
reflection over $y=x$
in the solution, we expect another eigenvalue to be near 227.

\begin{figure}[htb]
    \centering
    \begin{subfigure}{0.48\linewidth}
        \centering
        \includegraphics[width=\linewidth]%
        {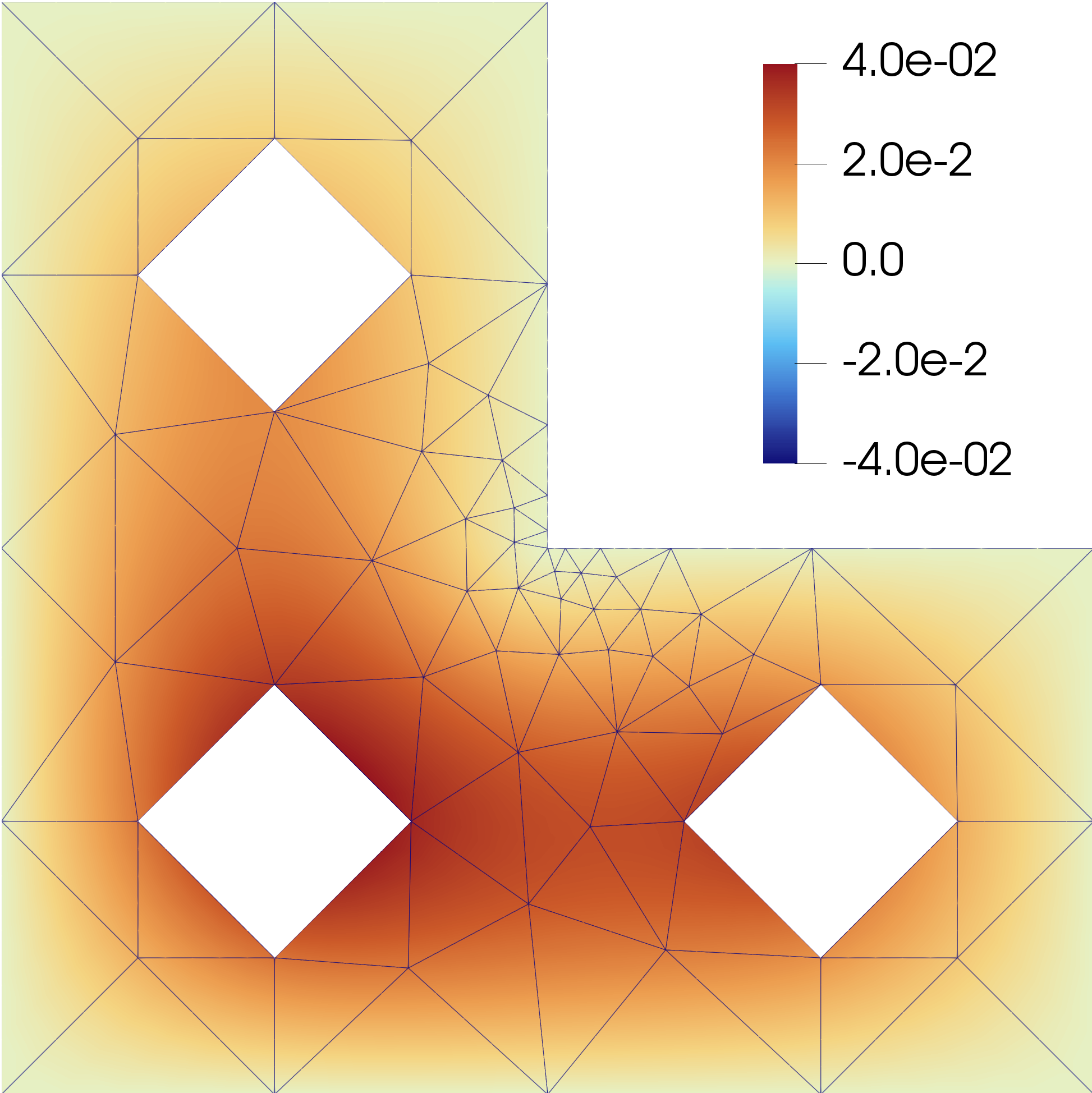}
        \caption{}
        \label{fig:kirchhoff-min-under}
    \end{subfigure}
    \hfill
    \begin{subfigure}{0.48\linewidth}
        \centering
        \includegraphics[width=\linewidth]%
        {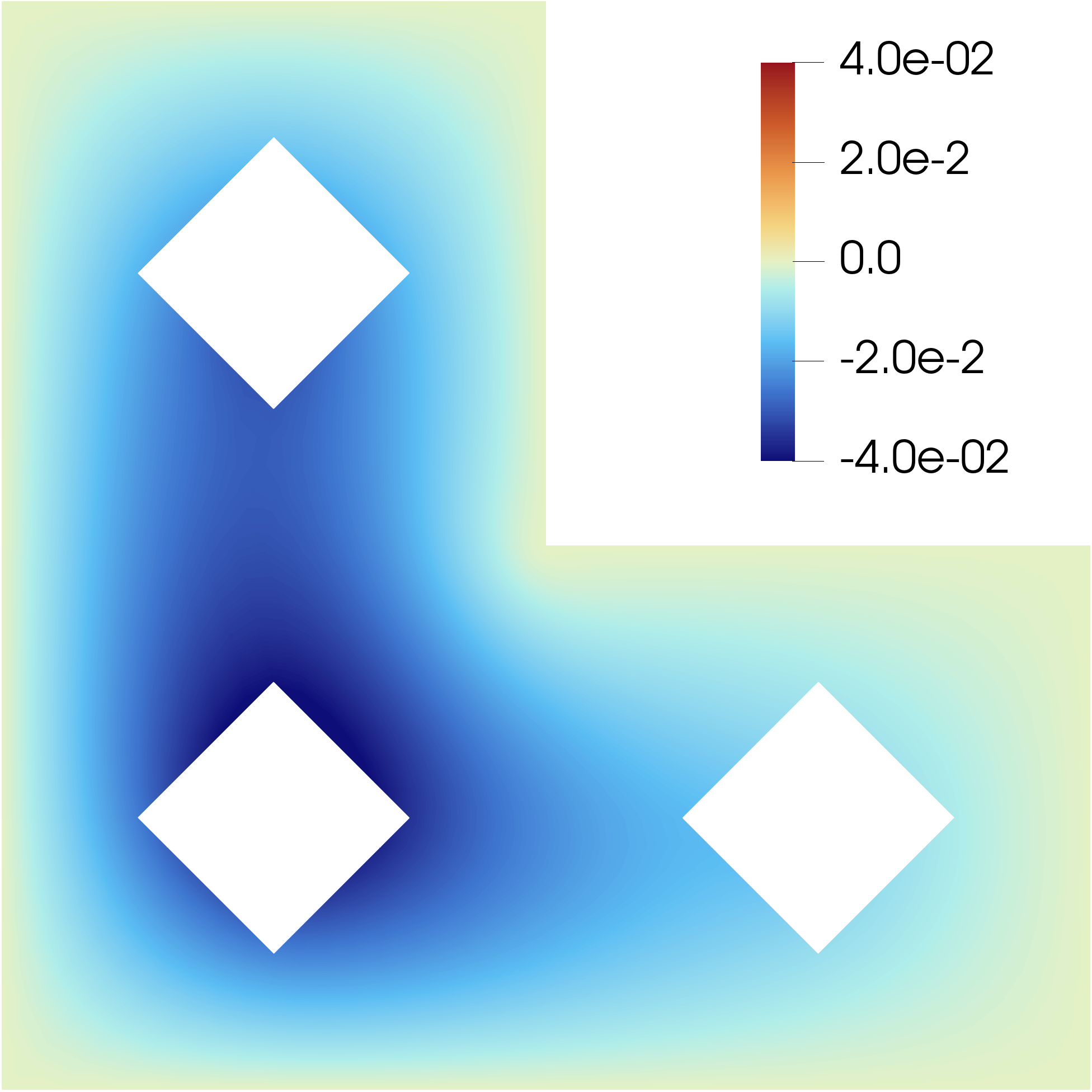}
        \caption{} 
        \label{fig:kirchhoff-min-over}
    \end{subfigure}
    \caption{Solution to \cref{eqn:harmonic-kirchhoff} with $\omega^2 = \tau 
    \lambda_{\min}^2$, where (a) $\tau = 0.975$ and (b) $\tau = 1.025$. Note 
    that the solution flips sign and reflects over $y=x$ from (a) to (b).} 
    \label{fig:kirchoff-min-under-over}
\end{figure}

\begin{figure}[htb]
    \centering
    \begin{subfigure}{0.48\linewidth}
        \centering
        \includegraphics[width=\linewidth]%
        {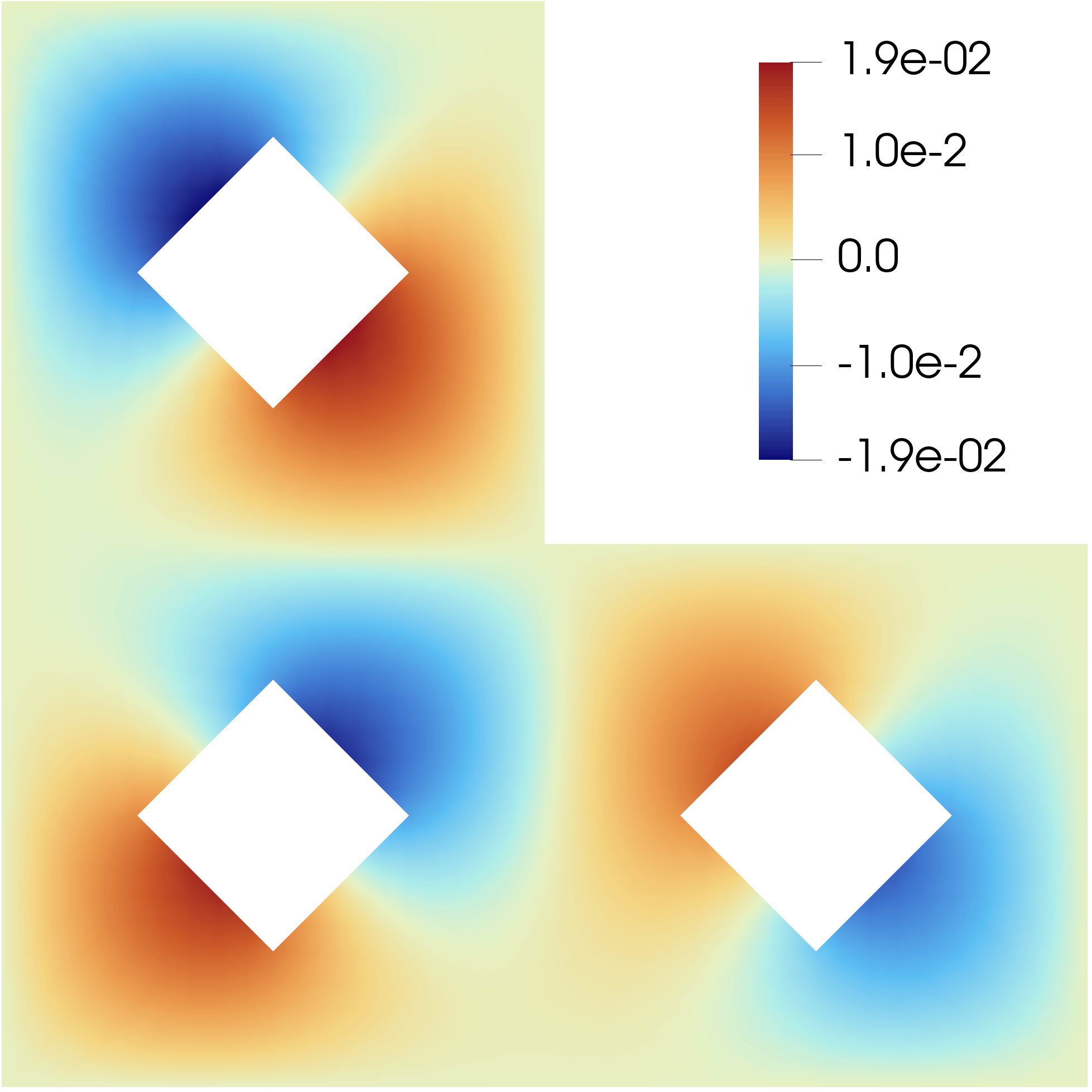}
        \caption{}
        \label{fig:kirchhoff-other-under}
    \end{subfigure}
    \hfill
    \begin{subfigure}{0.48\linewidth}
        \centering
        \includegraphics[width=\linewidth]%
        {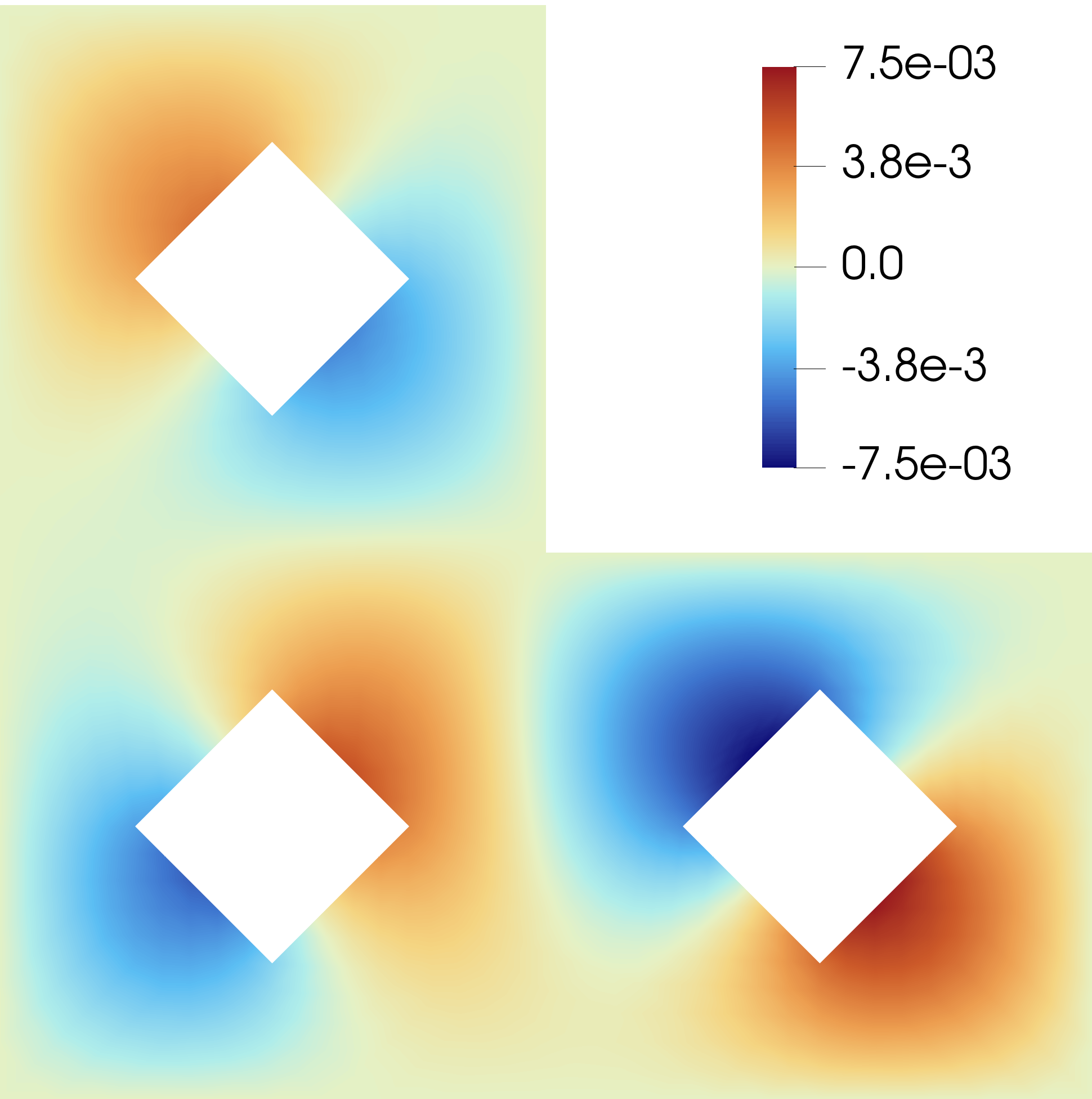}
        \caption{} 
        \label{fig:kirchhoff-othe}
        \end{subfigure}
    \caption{Solution to \cref{eqn:harmonic-kirchhoff} with $\omega^2 = 
        227^2 \tau$, where (a) $\tau = 0.975$ and (b) $\tau = 1.025$. Note 
        that the solution flips sign and reflects over $y=x$ from (a) to 
        (b).}
    \label{fig:kirchhoff-other-under-over} 
\end{figure}

\subsection{Incompressible flow}
\label{sec:numerics-incompressible}


The final example showcases the behavior of
\cref{alg:iter-penalty-computable} when $a(\cdot,\cdot)$ satisfies 
\ref{asmp:a-coercive-all} or \ref{asmp:a-sym-coercive-all}, but the 
inf-sup constant $\beta_X$ \cref{eqn:b-inf-sup-discrete} is made 
arbitrarily small.	
Consider the linear incompressible flow problem with $\nu=1$ from 
\cref{sec:key-examples-sv} discretized with the Scott--Vogelius elements 
\cref{eqn:flow-discrete-spaces-sv} with $k=4$ 
on the four element mesh of the unit square in 
\cref{fig:flow-mesh}. 
We vary the point at which the four triangles meet 
$\vertex{z} = (1/2 + \delta, 1/2)$ for chosen $\delta \in (0, 1/2)$, 
which in turn varies the inf-sup constant $\beta_X$. In particular, 
let $\theta_1, \ldots, \theta_4$ denote the angles in 
\cref{fig:flow-mesh} and set $\theta_5 := \theta_1$ and 
$\theta_{\vertex{z}} := \max_{1 \leq i \leq 4} |\sin(\theta_i + 
\theta_{i+1})|$. The inf-sup constant satisfies the following 
equivalence \cite{GrassleBohneSauter24,ScottVogelius85norm}:
\begin{align*}
    \underline{\beta}_0 \theta_{\vertex{z}} \leq \beta_X \leq 
    \overline{\beta}_0 \theta_{\vertex{z}}
\end{align*}
provided that $\theta_{\vertex{z}}$ is sufficiently small, where 
$\underline{\beta}_0$ and $\overline{\beta}_0$ are universal constants.
An elementary computation shows that $\theta_{\vertex{z}} \sim \delta$ for 
$\delta \ll 1$.

\begin{figure}[htb]
    \centering
    \begin{subfigure}[t]{0.4\linewidth}
        \centering
        \begin{tikzpicture}[scale=0.8]
            \coordinate (a) at (3.25, 3);
            \coordinate (a0) at (0, 0);
            \coordinate (a1) at (6, 0);
            \coordinate (a2) at (6, 6);
            \coordinate (a3) at (0, 6);
            \coordinate (center) at (3, 3);
            
            \filldraw[fill=black!50!white, draw=black!50!white] (center) circle (2pt);
            
            \filldraw (a) circle (2pt) 
            node[align=center,below]{$\vertex{z}$}
            -- (a0) circle (2pt) 	
            -- (a1) circle (2pt) 
            -- (a);
            \filldraw (a) circle (2pt) node[align=center,below]{}	
            -- (a1) circle (2pt) 
            -- (a2) circle (2pt) 
            -- (a);
            \filldraw (a) circle (2pt) node[align=center,below]{}	
            -- (a2) circle (2pt) 
            -- (a3) circle (2pt) 
            -- (a);
            \filldraw (a) circle (2pt) node[align=center,below]{}	
            -- (a3) circle (2pt) 
            -- (a0) circle (2pt) 
            -- (a);
            
            \draw (3, 0.5) node(K0){$K_1$};
            \draw (5.5, 3) node(K1){$K_2$};
            \draw (3, 5.5) node(K1){$K_3$};
            \draw (0.5, 3) node(Km){$K_4$};
            \pic["$\theta_1$"{anchor=north}, draw, angle radius=0.8cm, 
            angle eccentricity=1] {angle=a0--a--a1};
            \pic["$\theta_2$"{anchor=west}, draw, angle radius=1cm, 
            angle eccentricity=1] {angle=a1--a--a2};
            \pic["$\theta_3$"{anchor=south}, draw, angle radius=0.8cm, 
            angle eccentricity=1] {angle=a2--a--a3};
            \pic["$\theta_4$"{anchor=east}, draw, angle radius=1cm, 
            angle eccentricity=1] {angle=a3--a--a0};
            
        \end{tikzpicture}
        \caption{}
        \label{fig:flow-mesh}
    \end{subfigure}
    \hfill
    \begin{subfigure}[t]{0.48\linewidth}
        \centering
        \includegraphics[width=\linewidth]{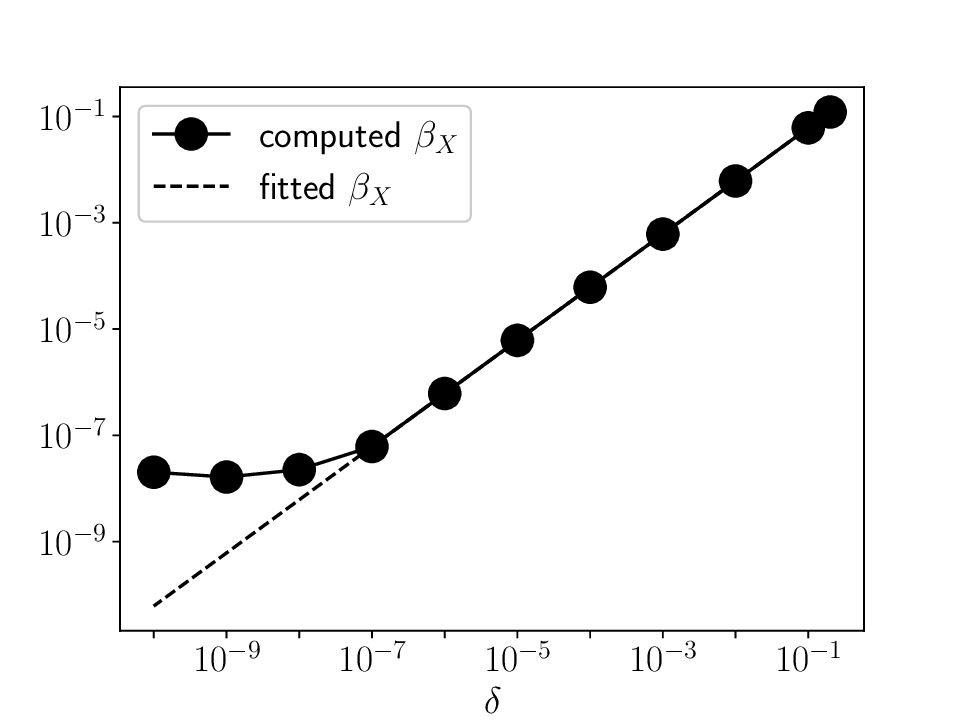}
        \caption{}
        \label{fig:flow-infsup}
    \end{subfigure}
    \caption{(a) Mesh for the example, with the gray dot at 
        $(1/2, 1/2)$, and the vertex at $(1/2 + \delta, 1/2)$.
        (b) The computed inf-sup constants $\beta_X$, with the 
        line of best fit continuation dashed, for $\delta \in \{ 10^{-10}, 
        10^{-9}, \ldots, 10^{-1}, 2\cdot 10^{-1} \}$.}
\end{figure}

We numerically compute the inf-sup constant as follows. 
Let $\sigma_{\min}^2$ denote the smallest nonzero eigenvalue of the 
following generalized eigenvalue problem: Find $\vecbd{u} \in \discrete{V}$ 
nonzero and $\sigma \in \mathbb{R}_+$ such that
\begin{align*}
    (\vecbd{u}, \vecbd{v})_{H^1(\Omega)^2} 
        = \sigma^2 (\div \vecbd{u}, \div \vecbd{v})_{L^2(\Omega)} 
    \qquad \forall \vecbd{v} \in \discrete{V}.
\end{align*}
Then, as we shall see in \cref{sec:refined-analysis-aspd} below, 
$\beta_X = \sigma_{\min}$. 
Since the above system is small for this example, 
we assemble the matrices and use \texttt{scipy.linalg.eigh} to compute 
the eigenvalues displayed in \cref{fig:flow-infsup}. 
Since the eigenvalue solver terminates when $\sigma_{\min}^2$ is on the 
order of machine precision, the computed inf-sup constant levels off around
$10^{-8}$ in double precision. 
The dashed line indicates the continuation of the line of 
best fit for $\delta > 10^{-7}$, which is a more 
accurate calculation of $\beta_X$ for $\delta < 10^{-7}$.

To demonstrate the convergence behavior of the iterated penalty method, we 
consider two choices of the convective velocity 
$\vecbd{w}$: (i) $\vecbd{w} = \vecbd{0}$ 
corresponding to the Stokes equations and (ii) ${\vecbd{w} = \curl 
(\sin(\pi x) \cos(\pi y))}$. The linear functionals are taken to be of the 
form $F(\vecbd{v}) = (\vecbd{f}, \vecbd{v})_{L^2(\Omega)}$ 
and ${G(q) = -(g, q)_{L^2(\Omega)}}$ for some $\vecbd{f}$ and $g$. We take 
$\vecbd{f}$ to be either (i) a smooth 
function ${\vecbd{f} = [\sin(\pi x), \sin(\pi y)]^{T}}$ or (ii) a continuous 
vector-valued piecewise polynomial of degree 4 on the mesh whose 
degrees of freedom are chosen uniformly at random in $[-0.5,0.5]$. 
Similarly, $g$ is taken to be either (i) zero or (ii) $\div \vecbd{\Phi}$ 
for a randomly chosen $\vecbd{\Phi} \in \discrete{V}$, so that in both 
cases, $\operator{R}_{\discrete{Q}}^{-1} G = {g}$. 
The six test cases are 
the smooth choice of $\vecbd{f}$ with $g = 0$, 
the random choice of $\vecbd{f}$ with $g = 0$, and 
the random choice of $\vecbd{f}$ and $g$, each with both choices of 
convective velocity.

For each of the six test cases, we choose 
$\delta \in \{ 10^{-10}, 10^{-9}, \ldots, 10^{-1}, 2\cdot 10^{-1} \}$ and	
run \cref{alg:iter-penalty-computable} with $\lambda = \rho = 10^3$ and 
$\texttt{tol2} = 10^{-12}$, skipping lines 5--6 and taking $g_{X, \rho}^m =   
\operator{R}_{\discrete{Q}}^{-1} G$ as above. 
We also terminate the method if 
the number of iterations exceeds $10^3$. The iteration counts and 
final values of
$\|\div \vecbd{u}_{X, \rho}^n - g\|_{L^2(\Omega)}$ 
are displayed in \cref{fig:flow-iterations-divergence}. 
For $\delta \geq 10^{-2}$, the iterated penalty method
terminates within $10^3$ iterations for all of the examples, which is
consistent with \cref{cor:iter-penalty-conv} since $\beta_X$ is not so 
small. 
For $\delta \leq 10^{-3}$, we observe that the method begins to fail to 
terminate within $10^3$ iterations unless $\delta$ is sufficiently small 
depending on the particular example. This behavior is 
in contrast with the convergence rate in 
\cref{cor:iter-penalty-conv}, which would suggest that more iterations are 
needed to terminate as $\delta$ decreases since $\beta_X$ is 
becoming increasingly smaller.
Additionally, the divergence plot \cref{fig:flow-divergence} shows that the 
value of $\|\div \vecbd{u}_{X, \rho}^n - g\|_{L^2(\Omega)}$ upon 
termination seems to scale like $\delta$ for $\delta < 10^{-3}$ for the 
examples that fail to converge within $10^3$ iterations.

\begin{figure}[htb]
    \centering
    \begin{subfigure}[b]{0.48\linewidth}
        \centering
        \includegraphics[width=\linewidth]{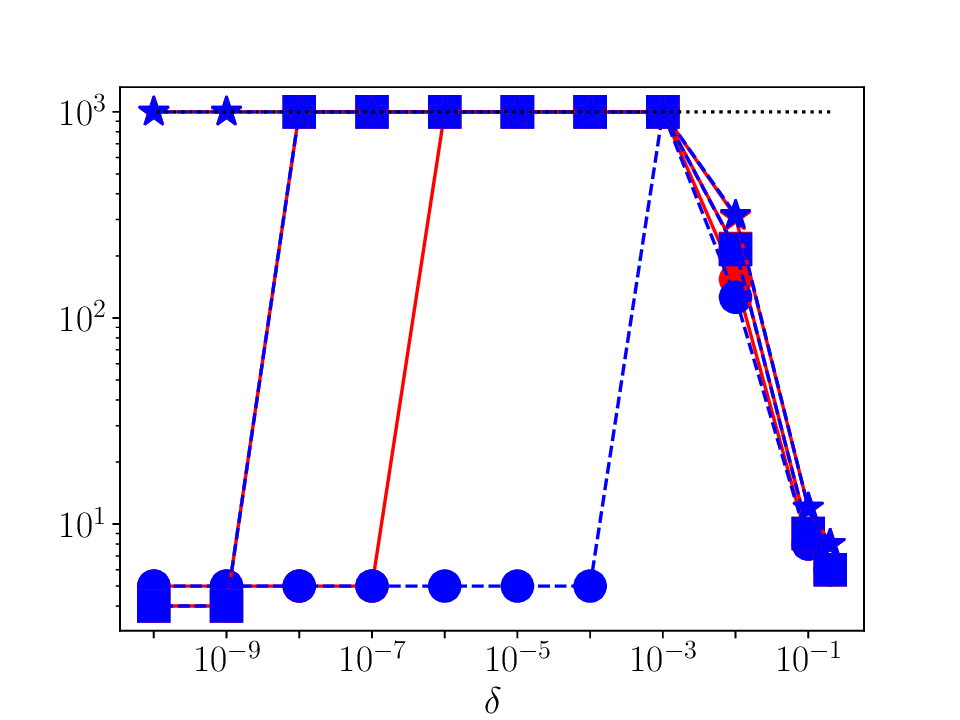}
        \caption{}
        \label{fig:flow-iterations}
    \end{subfigure}
    \hfill
    \begin{subfigure}[b]{0.48\linewidth}
        \centering
        \includegraphics[width=\linewidth]{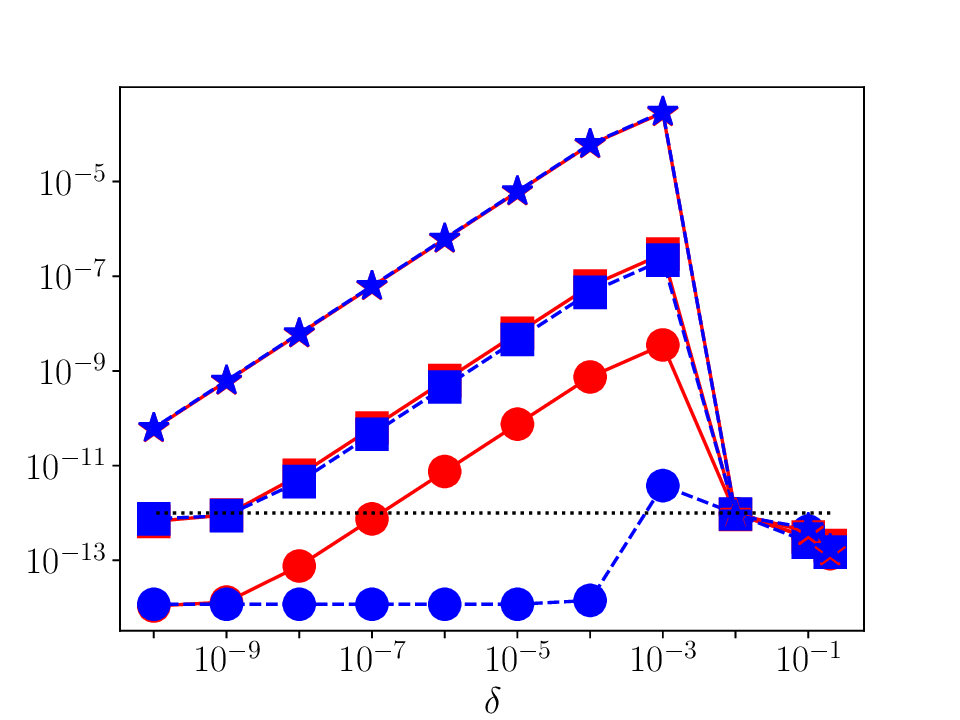}
        \caption{}
        \label{fig:flow-divergence}
    \end{subfigure}
    \caption{(a) Iteration counts and 
        (b) $\|\div \vecbd{u}_{X, \rho}^n - g\|_{L^2(\Omega)}$ upon 
        termination for the 2D flow example in 
        \cref{sec:numerics-incompressible} with 
        $\delta \in \{ 10^{-10}, 10^{-9}, \ldots, 10^{-1}, 2\cdot 10^{-1} 
        \}$. 
        \raisebox{-0.7mm}{\scalebox{1.75}{$\bullet$}}: $\vecbd{f}$ 
        smooth and $g \equiv 0$;  
        \raisebox{-0.4mm}{$\blacksquare$}: $\vecbd{f}$ random and 
        $g \equiv 0$; 
        $\bigstar$: $\vecbd{f}$ random and $g$ random;
        \textcolor{red}{red solid lines}: $\vecbd{w} = \vecbd{0}$;
        \textcolor{blue}{blue dashed lines}: $\vecbd{w}$ smooth. The dotted 
        black line represents the maximum number of iterations in (a) and 
        the stopping tolerance in (b). }
    \label{fig:flow-iterations-divergence}
\end{figure}

\section{Refining the analysis under \ref{asmp:a-sym-coercive-all}}
\label{sec:refined-analysis-aspd}

As we have seen in \cref{sec:numerics-incompressible} above, 
the estimates in 
\cref{thm:iter-penalty-2param-error} do not adequately explain the 
behavior of the 
iterated penalty method in all cases. In the case that $a(\cdot,\cdot)$ 
satisfies \ref{asmp:a-sym-coercive-all}, which we will assume for the rest 
of this section, we can perform a more refined analysis with generalized 
eigenfunctions, as was done in \cite{FortinGlow83} in the case $G=0$.

To this end, let $N_0 := \dim \discrete{Z}$ and
$N_{\perp} := \dim \tilde{\discrete{Z}} = \dim \discrete{Q}$.
Then, there exist an $a(\cdot,\cdot)$-orthonormal basis $\{ z_i 
\}_{i=1}^{N_0} \subset \discrete{Z}$ and an $a(\cdot,\cdot)$-orthonormal 
basis $\{ \tilde{z}_j \}_{j=1}^{N_{\perp}} \subset 
\tilde{\discrete{Z}}$ and strictly positive eigenvalues $\{ \sigma_j 
\}_{j=1}^{N_{\perp}}$ (sorted in ascending order) satisfying
\begin{align}
    \label{eqn:generalized-eigenvalue-problem}
    (\operator{D} \tilde{z}_i, \operator{D} \tilde{z}_j)_{Q} = 
    \delta_{ij} \sigma_i^2 
    a(\tilde{z}_i, \tilde{z}_j), \qquad 1 \leq i, j \leq N_{\perp}.
\end{align}
Note that the basis $\{ z_i \}_{i=1}^{N_0} \cup \{ \tilde{z}_j 
\}_{j=1}^{N_{\perp}}$ is both $a(\cdot,\cdot)$ and $(\operator{D} \cdot, 
\operator{D} 
\cdot)_Q$-orthogonal and $\{ \psi_j := \sigma_j^{-1} \operator{D} 
\tilde{z}_j 
\}_{j=1}^{N_{\perp}}$ is an orthonormal basis for $\discrete{Q}$. 
We also remark that $\sigma_1$ coincides with the following inf-sup 
constant \cite[Section 3.5]{ElmanBook}:
\begin{align*}
    \beta_{X, a} := \newinf_{q \in \discrete{Q}} \sup_{v \in \discrete{V}} 
    \frac{(\operator{D} v, q)_Q}{|v|_a \|q\|_Q},
\end{align*}
where we recall that $|\cdot|_a := \sqrt{a(\cdot,\cdot)}$.
Indeed, taking 
$q = \sum_{j=1}^{N_{\perp}} q_j \psi_j \in 
\discrete{Q}$ arbitrary, we set 
$v = \sum_{j=1}^{N_{\perp}} q_j \tilde{z}_j$ which gives
\begin{align*}
    \frac{(\operator{D} v, q)_Q}{|v|_a \|q\|_Q} =  
    \frac{\sum_{j=1}^{N_{\perp}} \sigma_j q_j^2}{\sum_{j=1}^{N_{\perp}} 
    q_j^2 } \geq \sigma_1  
\end{align*}
with equality if $q_j = 0$ for $j \geq 2$. This shows that 
$\beta_{X, a} = \sigma_{1}$. Similarly, we have
\begin{align*}
    M_{\operator{D}, a} := \sup_{ v \in \discrete{V} } \frac{\| 
    \operator{D} v\|_Q}{|v|_a} = \sigma_{N_{\perp}}.
\end{align*}

\subsection{Convergence estimates}

We first express the solution to \cref{eqn:saddle-discrete} in terms of the 
above eigenfunctions. Its proof follows from standard arguments
and is therefore omitted.
\begin{lemma}
    \label{lem:a-coercive-saddle-soln-eigen}
    The solution $u_X \in \discrete{V}$ and $p_X \in \discrete{Q}$ to 
    \cref{eqn:saddle-discrete} satisfies
    \begin{align*}
        u_X &= \sum_{i=1}^{N_0} F(z_i) z_i 
        + \sum_{j=1}^{N_{\perp}} \sigma_j^{-1} G(\psi_j) \tilde{z}_j
        \quad \text{and} \quad
        p_X = \sum_{j=1}^{N_{\perp}} \sigma_j^{-1} \left( F(\tilde{z}_j) - 
        \sigma_j^{-1} G(\psi_j) \right) \psi_j. 
    \end{align*}
\end{lemma}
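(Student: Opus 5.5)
The plan is to expand $u_X$ and $p_X$ in the eigenbases and identify the coefficients by testing the two equations of \cref{eqn:saddle-discrete} against basis functions.

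\textbf{Step 1: basis.} Under \ref{asmp:a-sym-coercive-all}, $a(\cdot,\cdot)$ is an inner product on $\discrete{V}$. Hence $\discrete{V} = \discrete{Z} \oplus \tilde{\discrete{Z}}$, an $a$-orthogonal direct sum. This also follows from \cref{lem:v-z-tildez-decomp}. The union $\{z_i\} \cup \{\tilde{z}_j\}$ is therefore an $a$-orthonormal basis of $\discrete{V}$. Since $\operator{D}\discrete{V} = \discrete{Q}$ and $\operator{D}$ kills $\discrete{Z}$, the map $\operator{D}$ is a bijection from $\tilde{\discrete{Z}}$ onto $\discrete{Q}$. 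Combined with \cref{eqn:generalized-eigenvalue-problem}, this shows $\{\psi_j\}$ is a $Q$-orthonormal basis and $\operator{D}\tilde{z}_j = \sigma_j\psi_j$. Write
\[
u_X = \sum_{i=1}^{N_0} c_i z_i + \sum_{j=1}^{N_\perp} d_j \tilde{z}_j,
\qquad
p_X = \sum_{j=1}^{N_\perp} e_j \psi_j .
\]

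\textbf{Step 2: coefficients from the constraint.} Test \cref{eqn:saddle-discrete-2} with $q = \psi_k$. This gives $(\operator{D} u_X, \psi_k)_Q = \sum_j d_j \sigma_j (\psi_j, \psi_k)_Q = d_k \sigma_k = G(\psi_k)$, so $d_k = \sigma_k^{-1} G(\psi_k)$.

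\textbf{Step 3: coefficients from the momentum equation.} Test \cref{eqn:saddle-discrete-1} with $v = z_k$. Since $\operator{D} z_k = 0$ and $a$-orthonormality holds, $a(u_X, z_k) = c_k = F(z_k)$. Next test with $v = \tilde{z}_k$. Here $a(u_X, \tilde{z}_k) = d_k$ and $(\operator{D}\tilde{z}_k, p_X)_Q = \sigma_k e_k$, so
\[
d_k + \sigma_k e_k = F(\tilde{z}_k),
\qquad
e_k = \sigma_k^{-1}\bigl(F(\tilde{z}_k) - \sigma_k^{-1} G(\psi_k)\bigr).
\]
These are the claimed formulas.

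There is no real obstacle. The only point needing care is Step 1: checking that $\{\psi_j\}$ spans $\discrete{Q}$. This uses the structure-preserving property together with $\dim \tilde{\discrete{Z}} = \dim \discrete{Q}$ and $\sigma_j > 0$, and the latter holds because $\operator{D}$ is injective on $\tilde{\discrete{Z}}$ (cf. \cref{lem:tilde-z-div-continuity}).
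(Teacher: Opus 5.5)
Your proof is correct. The paper omits this proof as following from ``standard arguments,'' and your expansion of $u_X$ and $p_X$ in the bases $\{z_i\}\cup\{\tilde{z}_j\}$ and $\{\psi_j\}$, with coefficients identified by testing \cref{eqn:saddle-discrete-2} with $\psi_k$ and \cref{eqn:saddle-discrete-1} with $z_k$ and $\tilde{z}_k$, is exactly that standard argument, with the vanishing of $a(z_i,\tilde{z}_k)$ in Step 3 supplied by the symmetry in \ref{asmp:a-sym-coercive-all}.
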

Now, we have a more refined expression for the error in the iterated 
penalty method.
\begin{lemma}
    \label{lem:iter-penalty-2param-spd-error-equality}
    Let $\rho, \lambda > 0$, $u_X, u_{X, \lambda, \rho}^n \in 
    \discrete{V}$, 
    and $p_X, p_{X, \lambda, \rho}^n \in \discrete{Q}$, $n \in \mathbb{N}$, 
    be as 
    in \cref{thm:iter-penalty-2param-error}. Then, there holds
    \begin{subequations}
        \label{eqn:iter-penalty-2param-spd-error-equality}
        \begin{align}
            \label{eqn:iter-penalty-2param-spd-error-equality-u}
            | u_X - u_{X, \lambda, \rho}^n |_a^2 &= 
            \sum_{j=1}^{N_{\perp}} 
                \frac{|1 + (\lambda-\rho) \sigma_j^2|^{2(n-1)}}{ |1 + \lambda 
            \sigma_j^2 |^{2n}}  
            \left( F(\tilde{z}_j) - \sigma_j^{-1} G(\psi_j) \right)^2, \\
            \label{eqn:iter-penalty-2param-spd-error-equality-p}
            \|p_X - p_{X, \lambda, \rho}^{n} \|_Q^2 &= 
            \sum_{j=1}^{N_{\perp}} \frac{1}{\sigma_j^{2}} 
            \left| \frac{1 + (\lambda-\rho) \sigma_j^2}{ 1 + \lambda 
            \sigma_j^2} \right|^{2n}
            \left( F(\tilde{z}_j) - \sigma_j^{-1} G(\psi_j) \right)^2, \\
            \label{eqn:iter-penalty-2param-spd-error-equality-d}
            \|\operator{R}_{\discrete{Q}}^{-1} G 
            - \operator{D} u_{X, \lambda, \rho}^n \|_{Q}^2 
            &= \sum_{j=1}^{N_{\perp}} 
                \frac{\sigma_j^2|1 + (\lambda-\rho) \sigma_j^2|^{2(n-1)}}{ 
                |1 + \lambda 
                \sigma_j^2 |^{2n}}  
            \left( F(\tilde{z}_j) - \sigma_j^{-1} G(\psi_j) \right)^2.
        \end{align}
    \end{subequations}
\end{lemma}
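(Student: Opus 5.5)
We diagonalize the error recursions of \cref{lem:iter-penalty-2param-error-prop} in the generalized eigenbasis \cref{eqn:generalized-eigenvalue-problem}. We work with the error quantities $e_X^n := u_X - u_{X,\lambda,\rho}^n$ and $r_X^n := p_X - p_{X,\lambda,\rho}^n$, which satisfy \cref{eqn:proof:iter-penalty-2param-key-ids}:
\begin{align*}
a_{1/\lambda}(e_X^n, v) = -(\operator{D} v, r_X^{n-1})_Q, \qquad r_X^n = r_X^{n-1} + \rho \operator{D} e_X^n, \qquad r_X^0 = p_X .
\end{align*}
Under \ref{asmp:a-sym-coercive-all}, $\epsilon_0 = \infty$, so the iterates are well-defined for every $\lambda > 0$.

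\textbf{Step 1: diagonal recursions.} By \cref{lem:iter-penalty-2param-error-prop}, $e_X^n \in \tilde{\discrete{Z}}$, so we expand $e_X^n = \sum_j e_j^n \tilde{z}_j$ and $r_X^n = \sum_j r_j^n \psi_j$. Testing the first relation with $v = \tilde{z}_k$ and using $a$-orthonormality and \cref{eqn:generalized-eigenvalue-problem}, the left side is $(1+\lambda\sigma_k^2) e_k^n$. For the right side, $(\operator{D}\tilde{z}_k, \psi_j)_Q = \sigma_k\delta_{jk}$, since $\operator{D}\tilde{z}_k = \sigma_k\psi_k$ and $\{\psi_j\}$ is orthonormal. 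Hence
\begin{align*}
e_k^n = -\frac{\sigma_k}{1+\lambda\sigma_k^2}\, r_k^{n-1}.
\end{align*}
Substituting into the second relation gives
\begin{align*}
r_k^n = \Bigl(1 - \frac{\rho\sigma_k^2}{1+\lambda\sigma_k^2}\Bigr) r_k^{n-1} = \frac{1+(\lambda-\rho)\sigma_k^2}{1+\lambda\sigma_k^2}\, r_k^{n-1}.
\end{align*}

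\textbf{Step 2: initial data and the three norms.} By \cref{lem:a-coercive-saddle-soln-eigen}, $r_k^0 = (p_X,\psi_k)_Q = \sigma_k^{-1}\bigl(F(\tilde{z}_k) - \sigma_k^{-1}G(\psi_k)\bigr)$. Iterating the recursion, $r_k^n$ equals the $n$-th power of the ratio times $r_k^0$. Since $\{\psi_j\}$ is $Q$-orthonormal, $\|r_X^n\|_Q^2 = \sum_k (r_k^n)^2$, which is \cref{eqn:iter-penalty-2param-spd-error-equality-p}. For \cref{eqn:iter-penalty-2param-spd-error-equality-u}, $|e_X^n|_a^2 = \sum_k (e_k^n)^2$, and
\begin{align*}
(e_k^n)^2 = \frac{\sigma_k^2}{(1+\lambda\sigma_k^2)^2}\,(r_k^{n-1})^2 .
\end{align*}
Inserting $r_k^{n-1}$, the factor $\sigma_k^2$ cancels the $\sigma_k^{-2}$ coming from $(r_k^0)^2$, and the powers combine to $|1+(\lambda-\rho)\sigma_k^2|^{2(n-1)}/|1+\lambda\sigma_k^2|^{2n}$.

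For \cref{eqn:iter-penalty-2param-spd-error-equality-d}, $\operator{R}_{\discrete{Q}}^{-1}G = \operator{D}u_X$ by \cref{eqn:saddle-discrete-2} and structure preservation, so the residual equals $\operator{D}e_X^n = \sum_k \sigma_k e_k^n \psi_k$. Its squared $Q$-norm is therefore the previous sum weighted by an additional $\sigma_k^2$.

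\textbf{Main obstacle.} The only delicate point is bookkeeping. We must check that the recursion from \cref{eqn:proof:iter-penalty-2param-key-ids} matches the index convention, with $e_X^n$ driven by $r_X^{n-1}$, so that the exponents come out as $n-1$ and $n$ and not shifted. We must also confirm $r_k^0$ directly against \cref{lem:a-coercive-saddle-soln-eigen}, including the sign convention, which is harmless since every quantity appears squared.
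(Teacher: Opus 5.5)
Your proposal is correct and follows the paper's proof essentially verbatim. You test \cref{eqn:proof:iter-penalty-2param-key-ids} with $v = \tilde{z}_k$ to get the diagonal coefficient recursions, seed them with $r_X^0 = p_X$ via \cref{lem:a-coercive-saddle-soln-eigen}, and read off the three norms from the $a$-orthonormality of $\{\tilde{z}_j\}$, the $Q$-orthonormality of $\{\psi_j\}$, and $\operator{R}_{\discrete{Q}}^{-1}G = \operator{D}u_X$, with the exponents $n-1$ and $n$ coming out exactly as you computed.
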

\begin{proof}
    Let $e_X^n := u_X - u_{X, \lambda, \rho}^{n}$ and 
    $r_X^n := p_X - p_{X, \lambda, \rho}^n$. On recalling that $e_X^n \in 
    \tilde{\discrete{Z}}$, we expand these 
    functions in the eigenexpansion: $e_X^{n} = \sum_{j=1}^{N_{\perp}} 
    \tilde{e}_j^n \tilde{z}_j$ and $r_X^n = \sum_{j=1}^{N_{\perp}} r_j^n 
    \psi_j$. Choosing $v = \tilde{z}_k$ in 
    \cref{eqn:proof:iter-penalty-2param-key-ids} gives
    \begin{align*}
        (1 + \lambda \sigma_k^2) \tilde{e}_k^n = -\sigma_k r_k^{n-1} \quad 
        \text{and} \quad r_k^n = r_k^{n-1} + \rho \sigma_k \tilde{e}_k^n 
        \implies 
        r_k^n = \frac{1 + (\lambda - \rho) \sigma_k^2}{1 + \lambda 
            \sigma_k^2} r_k^{n-1}.
    \end{align*}
    Since $r_X^0 = p_X$, \cref{lem:a-coercive-saddle-soln-eigen} gives
    \begin{align*}
        \tilde{e}_k^n &= -\frac{(1 + 
            (\lambda - \rho) 
            \sigma_k^2)^{n-1}}{(1 + \lambda \sigma_k^2)^n} 
            (F(\tilde{z}_k) - \sigma_k^{-1} G(\psi_k)) \\
        r_k^n &= \frac{1}{\sigma_k} \left( \frac{1 + (\lambda - \rho) 
            \sigma_k^2}{1 + \lambda \sigma_k^2}  \right)^n 
            (F(\tilde{z}_k) - \sigma_k^{-1} G(\psi_k)). 
    \end{align*}
    The result now follows from \cref{eqn:generalized-eigenvalue-problem} 
    on noting that $\operator{R}_{\discrete{Q}}^{-1} 
    G = \operator{D} u_X$.
\end{proof}

Note that for any $\lambda > 0$ and $\rho \in (0, 2\lambda)$, 
\cref{eqn:iter-penalty-2param-spd-error-equality} shows that the iterated 
penalty method converges geometrically with a rate given by
\begin{align}
    \label{eqn:iter-penalty-2param-spd-error-rate}
    \max_{1 \leq j \leq N_{\perp}} \left| \frac{1 + (\lambda - 
    \rho)\sigma_j^2}{1 + \lambda \sigma_j^2} \right| 
    = \max\left\{ 
    \frac{1 + (\lambda - \rho)\beta_{X, a}^2}{1 + \lambda \beta_{X, a}^2}, 
    \frac{(\rho - \lambda)M_{\operator{D}, a}^2 - 1}{1 + \lambda 
    M_{\operator{D}, a}^2} \right\},
\end{align}
where the equality follows from elementary calculus.

\subsection{Re-examining the incompressible flow example}

We revisit two of the incompressible flow problems in 
\cref{sec:numerics-incompressible}: (i) $\vecbd{f}$ smooth and $g=0$ with 
$\delta \in \{ 10^{-6}, 10^{-7}\}$ and (ii) $\vecbd{f}$ random and $g=0$ 
with $\delta \in \{ 10^{-9}, 10^{-10} \}$. For each example, we numerically 
compute the eigenfunctions $\vecbd{\tilde{z}}_j \in  \tilde{\discrete{Z}}$ 
and eigenvalues $\sigma_j$ defined by 
\cref{eqn:generalized-eigenvalue-problem}
and plot in \cref{fig:flow-eigen} the ${|\cdot|_a}$ velocity error, the $L^2$ 
pressure error, and the 
$L^2$ norm of $\div \vecbd{u}_X^n$ computed via 
\cref{eqn:iter-penalty-2param-spd-error-equality}. 
Since $\sigma_1$ is approximately $(0.867) \delta$ and $\sigma_2$ is 
approximately $5.48 \times 10^{-1}$ for 
$\delta \in \{10^{-6}, 10^{-7}, \ldots, 10^{-10}\}$,
we separate the errors into the contributions from  
the first eigenmode and the remaining eigenmodes. For instance, we plot
\begin{align}
    \label{eqn:eigen-mode-error-split}
    \frac{1}{ |1 + \rho \sigma_1^2 |^{n}} |F(\vecbd{\tilde{z}}_1)|
    \quad \text{and} \quad 
    \left( \sum_{j=2}^{N_{\perp}} 
    \frac{1 }{ |1 + \rho \sigma_j^2 |^{2n}}  
    |F(\vecbd{\tilde{z}}_j)|^2 \right)^{1/2}
\end{align} 
for the $|\cdot|_a$ velocity error, with the other errors split 
analogously.
    
\begin{figure}[htb]
    \centering
    \begin{subfigure}[b]{0.48\linewidth}
        \centering
        \includegraphics[width=\linewidth]%
        {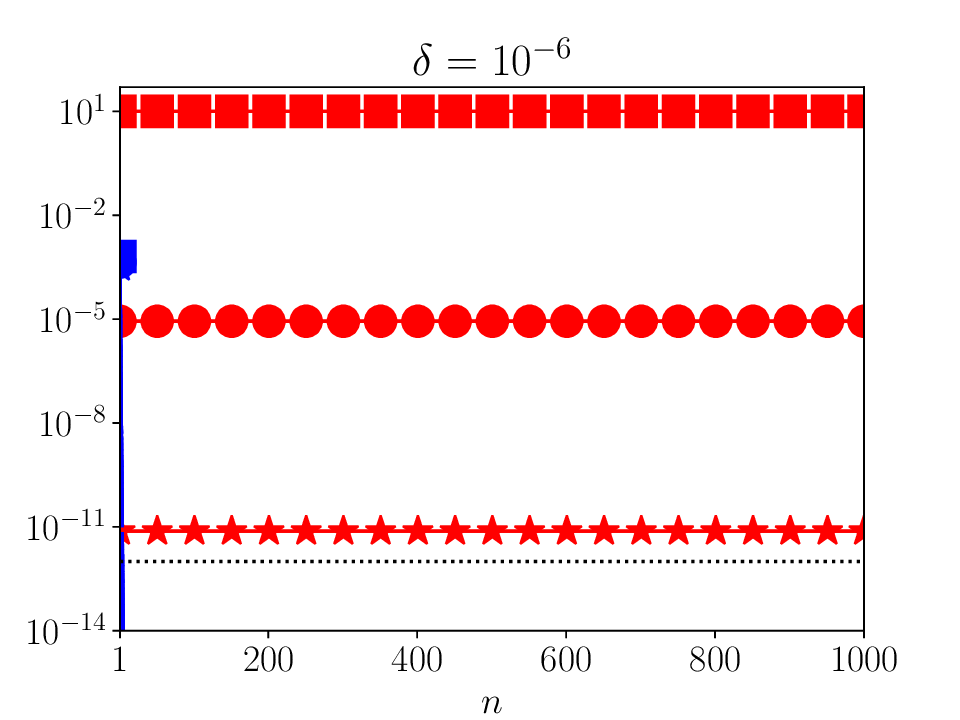}
        \caption{}
    \end{subfigure}
    \hfill
    \begin{subfigure}[b]{0.48\linewidth}
        \centering
        \includegraphics[width=\linewidth]%
        {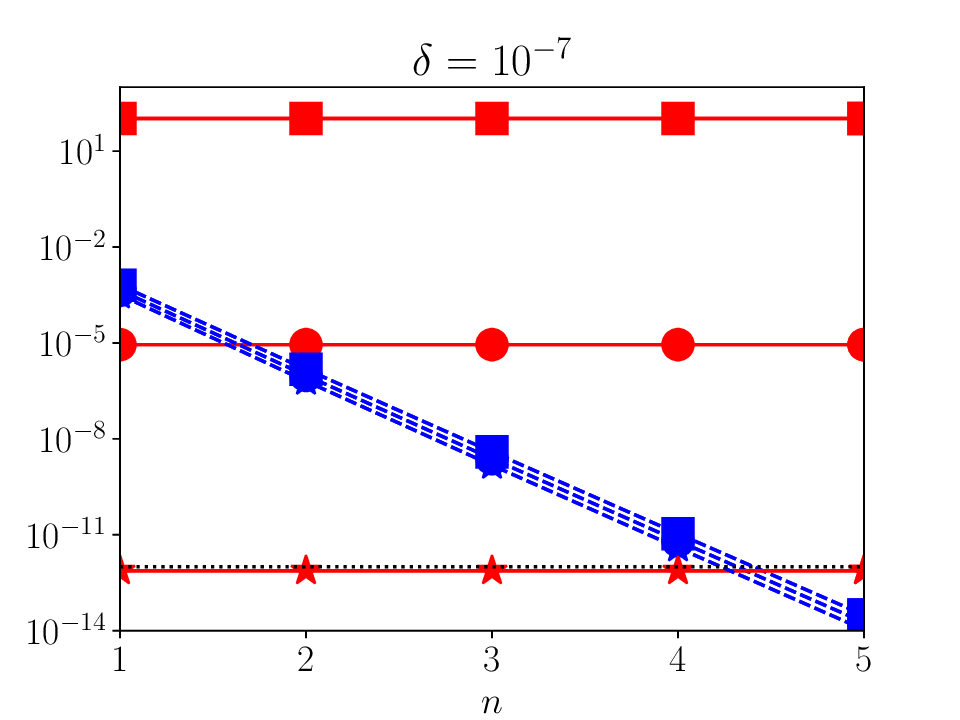}
        \caption{}
    \end{subfigure}
    \\
    \begin{subfigure}[b]{0.48\linewidth}
        \centering
        \includegraphics[width=\linewidth]%
        {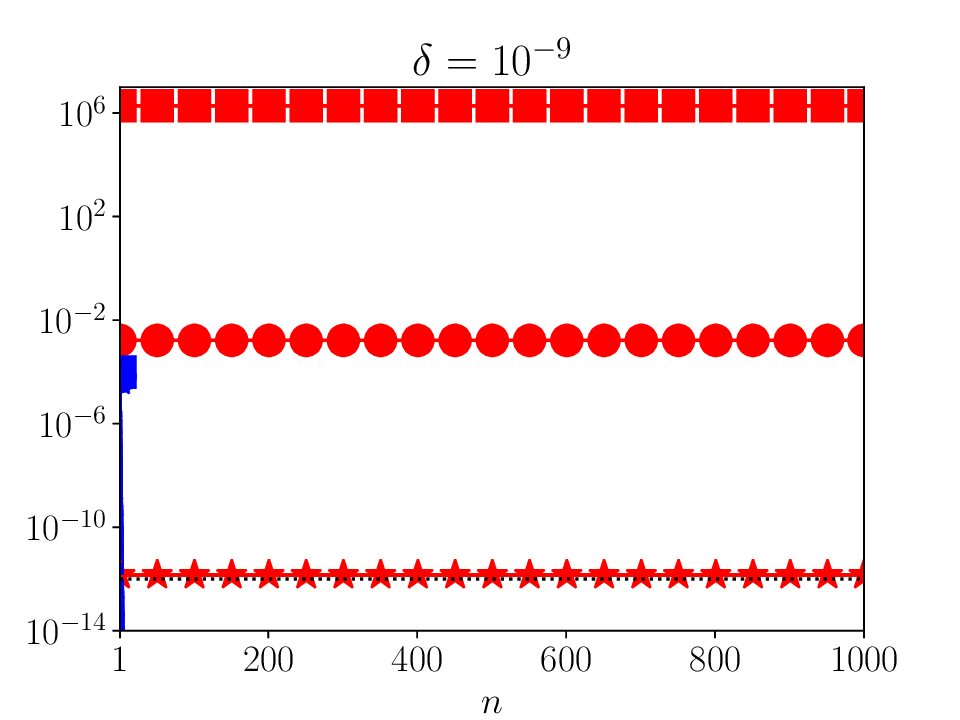}
        \caption{}
    \end{subfigure}
    \hfill
    \begin{subfigure}[b]{0.48\linewidth}
        \centering
        \includegraphics[width=\linewidth]%
        {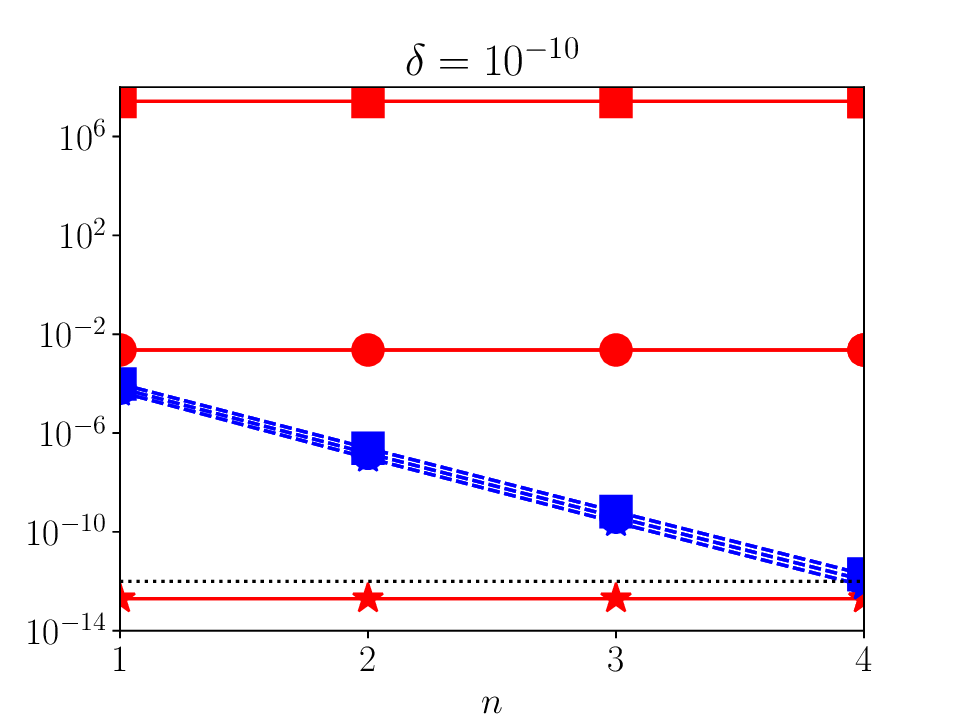}
        \caption{}
    \end{subfigure}
    \caption{Numerical results for the incompressible flow problem in 
    \cref{sec:numerics-incompressible} with 
        (a-b) $\vecbd{f}$ smooth and $g=0$ and (c-d) $\vecbd{f}$ random and 
        $g=0$. The errors are split into the first and remaining eigenmodes 
        as in \cref{eqn:eigen-mode-error-split}.
        \raisebox{-0.7mm}{\scalebox{1.75}{$\bullet$}}: velocity 
        $|\cdot|_a$ error;  
        \raisebox{-0.4mm}{$\blacksquare$}: pressure $\|\cdot\|_{L^2}$ 
        error; 
        $\bigstar$: $\|\div \cdot \|_{L^2(\Omega)}$ error;
        \textcolor{red}{red solid lines}: error in first eigen 
        mode;
        \textcolor{blue}{blue dashed lines}: error in the remaining 
        eigenmodes. 
        The dotted black lines in (b) and (d) indicate the stopping 
        tolerance. }
    \label{fig:flow-eigen}
\end{figure}
    
For each of the examples, we observe little change from the first and final 
iterations in the error in the first eigenmode for each of the three 
error metrics, as $1/(1 + \rho \sigma_1^2) \approx 1$. Meanwhile, 
the error in the remaining eigenmodes converges quickly to 0 since 
$1/(1 + \rho \sigma_2^2) \ll 1$. 
Thus, the iterated penalty method will terminate within $10^3$ iterations  
only if $\|\div \vecbd{u}_{X, \rho}^1\|_{L^2(\Omega)} < \texttt{tol2}$. 
Since $\|\div \vecbd{u}_{X, \rho}^1\|_{L^2(\Omega)}$ scales like $\delta$ 
thanks to \cref{eqn:iter-penalty-2param-spd-error-equality-d}, the method 
terminates only if $\delta$ is sufficiently small, which is precisely what 
we observe 
in \cref{fig:flow-eigen} and what we observed earlier in 
\cref{fig:flow-iterations-divergence}. 
However, even when the iterated penalty method terminates, 
the errors in $\vecbd{u}_X$ and $p_X$ are still well-above the tolerance. 
In fact, the error in $p_X$ scales like $1/\delta$. 
Consequently, one must exercise 
caution if the value of the inf-sup constant $\beta_X$ is unknown or 
$\beta_X \ll 1$, as \cref{alg:iter-penalty-computable} may terminate since 
the stopping criterion \cref{eqn:residual-stopping-criteria} is satisfied, 
but the errors $\|\vecbd{u}_{X,\rho}^n - \vecbd{u}_X\|_V$ and 
$\|p_{X, \rho}^n - p_X\|_{Q}$ are large.

\begin{remark}
    In \cref{sec:numerics-incompressible}, the example with $\vecbd{f}$  
    random and $g=0$ converged for $\delta \leq 10^{-9}$, 
    whereas the computations in \cref{fig:flow-eigen} show that it 
    converges for $\delta \leq 10^{-10}$. 
    This discrepancy is likely due to rounding errors.
\end{remark}

\section{Relation to previous work}
\label{sec:previous-work}

\subsection{Penalty method}

The stability of the perturbed saddle point problem or penalty method 
\cref{eqn:saddle-discrete-penalty} has long been known, with
the first result seemingly being \cite[Theorem 5.1]{Arnold81}, which
assumed \ref{asmp:a-sym-nonneg-coercive-z} and showed existence and 
stability of solutions for all $\epsilon \geq 0$. However, the dependence
of the stability constants on $M_a$, $\beta_X$, etc.~was not explicitly
given, as is the case in many standard references e.g.~\cite[p.~65, Theorem 4.3]{GiraultRaviart86},
\cite[p.~47, Theorem 1.2]{BrezziFortin91}, and
\cite[Theorem 4.11]{Braess07}. When explicit constants
do appear, e.g.
\cite[Theorem 4.3.2]{BoffiBrezziFortin13} assuming 
\ref{asmp:a-sym-nonneg-coercive-z},
\cite[eq.~(3.16)]{AinsworthParker22unlocking} for linear elasticity,
and
\cite[Lemma 7.1.1]{BernardiGiraultHechRaviartRiviere25}
for the Stokes equations,
the dependence of the constants in \cref{tab:saddle-perturb-cont-const}
are sharper, particularly as $\beta_X \to 0$ and/or as $\epsilon \to 0$. 
We also remark that the paragraph after \cite[p.~359, Theorem 
13.1.19]{BrennerScott08} notes that the arguments in 
\cite{BrennerScott08} are not sufficient to prove that
$\|p_{X, \epsilon} - p_X\|_{Q} \to 0$ as $\epsilon \to 0$, but 
\cref{lem:saddle-perturb-error} shows that this is always the case.
Thus the last sentence in the paragraph after \cite[p.~359, Theorem 
13.1.19]{BrennerScott08} is wrong.

\subsection{Iterated penalty/augmented Lagrangian method}

The convergence of the iterated penalty/augmented Lagrangian method
\cref{eqn:iter-penalty-2param}
has also been previously established under certain conditions. The 
first convergence proofs appear to be due to Fortin and Glowinski
\cite{FortinGlow83}, assuming that $a(\cdot,\cdot)$ satisfies 
\ref{asmp:a-sym-coercive-all}, $G \equiv 0$, and $\rho \in (0, 2\lambda]$.
In particular, if we define 
\begin{align*}
    y_{X, \lambda \rho}^{n} := \mathcal{A}^{-1} \operator{D}^{\star}(
    p_{X, \lambda, \rho}^{n-1} - p_X),
\end{align*}
then \cite[p.~10, eq.~(2.33)]{FortinGlow83} gives the relation
\begin{align*}
    y_{X, \lambda, \rho}^{n+1} &= 
        \frac{1 + (\lambda - \rho) \sigma_k^2}{1 + \lambda \sigma_k^2} 
        y_{X, \lambda, \rho}^{n},
\end{align*}
which resembles expressions in the proof of 
\cref{lem:iter-penalty-2param-spd-error-equality}.
In particular $y_{X, \lambda, \rho} \to 0$ if and only if the
augmented Lagrangian method converges to the solution to 
\cref{eqn:saddle-discrete}. Pages 10-15 of \cite{FortinGlow83}
then show convergence rates of $y_{X, \lambda, \rho}$ for various 
choices of $\rho$ for a given choice of $\lambda$. Although not explicitly
stated in \cite{FortinGlow83}, one could arrive at 
\cref{lem:iter-penalty-2param-spd-error-equality} with elementary 
manipulations. It is also stated in \cite[p.~42, Remark 5.1]{FortinGlow83}
that all of the discussions extend to the case $G \neq 0$ without proof.
In \cite[p.~42-3, Remark 5.2]{FortinGlow83}, it is also shown that
\cref{eqn:iter-penalty-2param} converges if $a(\cdot,\cdot)$ satisfies
\ref{asmp:a-coercive-all} provided that
$0 < \rho \leq 2\lambda$ for any $\lambda > 0$, but rates of convergence
are not given.

The geometric convergence of a variable step variant of 
\cref{eqn:iter-penalty-2param} where $\rho$ is replaced by $\rho_n$ is 
proved in
\cite[p.~74 Theorem 4.7]{GiraultRaviart86} under the assumption that
$a_{1/\lambda}(\cdot,\cdot)$ is coercive and 
\begin{align*}
0 < \inf_{n} \rho_n \leq \sup_n \rho_n 
    < 2 \inf_{v \in \discrete{V} : \operator{D} v \neq 0} 
        \frac{a_{1/\lambda}(v, v)}{\|\operator{D} v\|_Q^2}. 
\end{align*}
However, suitable conditions
under which $a_{1/\lambda}(\cdot,\cdot)$ is coercive are not given, 
and the convergence rate in \cite[p.~76, eq.~(4.65)]{GiraultRaviart86}
is not explicit in $\lambda$.

Convergence rates and stopping criterion of the iterated penalty method
are also discussed in \cite[Chapter 13]{BrennerScott08}. 
The precise  conditions on $a(\cdot,\cdot)$ are not 
explicitly stated, but the proof
of the main convergence result, \cite[Theorem 13.1.19]{BrennerScott08},
uses that $a(\cdot,\cdot)$ satisfies \ref{asmp:a-sym-nonneg-coercive-z}.
Theorem 13.1.19 of \cite{BrennerScott08} shows that if $\lambda$ is 
sufficiently large and $0 < \rho < 2 \lambda$, then the iterated 
penalty method converges at a geometric rate. A rate for the case $\rho 
\neq \lambda$ is not explicitly given, while the rate for $\lambda = \rho$
in Theorem 13.1.19 is not as sharp as the rate given by 
\cref{thm:iter-penalty-2param-error}. The stopping criterion 
\cref{eqn:residual-stopping-criteria} is also proposed. We also note
that \cite[Remark 13.2.8]{BrennerScott08} states that taking
$\rho = \lambda$ large adversely affects the approximation of $p_X$.
However, due to the labeling of our iterates in 
\cref{eqn:iter-penalty-2param} and sharper estimates in
\cref{thm:iter-penalty-2param-error}, \cref{cor:iter-penalty-conv},
and \cref{eqn:iter-penalty-1param-error-by-residual},
large values of $\lambda = \rho$ do not adversely affect convergence of 
either variable.

Nochetto and Pyo \cite{NochettoPyo04} consider an inf-sup
stable conforming discretization of the Stokes equations with
$a(\cdot,\cdot) = \nu (\grad \cdot, \grad \cdot)_{L^2(\Omega)} 
= \nu (\cdot,\cdot)_{\discrete{V}}$ with $\nu > 0$, $G = 0$, 
and homogeneous Dirichlet boundary conditions. 
Theorem 1 and Corollary 1 of \cite{NochettoPyo04} show that 
\cref{eqn:iter-penalty-2param} converges at a geometric rate for all 
$\lambda > 0$ and $0 < \rho < 2(\nu + \lambda)$. As shown
in Remark 1.1 of \cite{NochettoPyo04}, the optimal choice of $\rho$
gives a rate of $1-\beta_X^2$. However, the rate in
\cref{eqn:iter-penalty-2param-spd-error-rate} can be made 
arbitrarily small by taking $\lambda = \rho$ large.

Awanou and Lai \cite{AwanouLai05} appear to be the first to show the 
convergence of the augmented Lagrangian method if the symmetric part of 
$a(\cdot,\cdot)$ is positive definite on $\discrete{Z}$ but 
$a(\cdot,\cdot)$ is indefinite and not symmetric in general. Note
that this assumption is strictly stronger than 
\cref{eqn:a-infsup-kernel-discrete}. In particular, 
\cite[Theorem 4]{AwanouLai05} shows that if $\lambda \geq \rho/2 > 0$,
then the augmented Lagrangian method \cref{eqn:iter-penalty-2param} 
converges, while \cite[Theorem 6]{AwanouLai05} show geometric convergence 
if $\lambda = \rho$ is sufficiently large. However,
how large $\lambda$ must be and the exact convergence rate are not 
explicitly stated. 

Huang et al. \cite{HuangDaiOrbanSaunders24} consider the same setting
as \cite{AwanouLai05} in the case $\lambda = \rho$ and provide explicit 
convergence rates for $\lambda$ sufficiently large. In particular, if
we define  
\begin{align*}
    \chi := \inf_{ v \notin \discrete{Z} } 
        \frac{ a(v, v) }{ (\operator{D} v, \operator{D} v)_Q },
\end{align*}
then \cite[Lemma 2.1]{HuangDaiOrbanSaunders24} shows that if $\lambda > 
\max\{-\chi, 0\}$, 
then $\operator{A}_{1/\lambda}$ is positive definite. This 
result is comparable to \cref{lem:tilde-z-eps-coercivity} below,
which shows that if $\lambda > 1/\epsilon_0$, $\operator{A}_{1/\lambda}$
is coercive on $\tilde{\discrete{Z}}$. Note that
\begin{align}
    \label{eqn:tilde-chi-def}
    \tilde{\chi} := \inf_{ v \in 
        \tilde{\discrete{Z}} } 
    \frac{ a(v, v) }{ (\operator{D} v, \operator{D} v)_Q } \geq 
    -M_a \sup_{v \in \tilde{\discrete{Z}}} \frac{\|v\|_V^2}{ 
        \|\operator{D} v\|_Q^2} \geq -\frac{M_a 
        \Upsilon_X^2}{\beta_X^2} = -\frac{1}{\epsilon_0},
\end{align}
and so \cref{lem:tilde-z-eps-coercivity} is less sharp than 
\cite[Lemma 2.1]{HuangDaiOrbanSaunders24}, but is explicit in 
constants related to $a(\cdot,\cdot)$ and $\operator{D}$. 
Theorem 2.1 of \cite{HuangDaiOrbanSaunders24} shows that if $\lambda > 
\max\{-2\chi, 0\}$, then the iterated penalty method converges, 
and the proof shows that the rate is
\begin{align}
    \label{eqn:huang-rate}
    \max_{\mu} \frac{ \lambda^{-1} |\mu|}{|1 + \lambda^{-1} \mu|} = 
    \max_{\mu} \frac{|\mu|}{|\lambda + \mu|}.
\end{align}
Here, $\mu$ is a complex generalized eigenvalue of the system
\begin{align*}
    \tensorbd{A} \vec{x} = \mu \tensorbd{B} \vec{x}
    \qquad
    \vec{x} \in \mathbb{C}^{\dim \discrete{V}}, \ \tensorbd{B} \vec{x} \neq 
    0, 
\end{align*} 
where $\tensorbd{A}$ and $\tensorbd{B}$ are the matrices corresponding
to $a(\cdot,\cdot)$ and $(\operator{D} \cdot, \operator{D} \cdot)_Q$ for 
some fixed basis of $\discrete{V}$. In contrast, 
\cref{cor:iter-penalty-conv} requires $\lambda > \epsilon_0^{-1}(1 + 
\Upsilon_X^{-1})$ (recall that $1 + \Upsilon_X^{-1} \leq 2$) for 
convergence of the iterated penalty method. However, 
it is not straightforward to compare the rate \cref{eqn:huang-rate}
to the rate in \cref{cor:iter-penalty-conv} owing to the presence of 
complex eigenvalues. If $a(\cdot,\cdot)$ is symmetric, then 
all of the eigenvalues are real with $|\mu| \leq 1/\epsilon_0$ arguing as 
in \cref{eqn:tilde-chi-def}, and so 
\begin{align*}
    \max_{\mu} \frac{ \lambda^{-1} |\mu|}{|1 + \lambda^{-1} \mu|} \leq 
    \frac{1}{\lambda \epsilon_0 - 1}.
\end{align*}
The upper bound above is a less optimal rate than in 
\cref{cor:iter-penalty-conv}.

For the method described in \cref{sec:fourth-order} with $A(\cdot,\cdot)$
satisfying \cref{eqn:h2-a-infsup} ($G \equiv 0$), a convergence proof for 
the iterated penalty method with $\lambda = \rho$ sufficiently large was 
given in \cite[Theorem 5.1]{AinsworthParker24c1} with an explicit 
convergence rate. The rates obtained in \cref{cor:iter-penalty-conv} are 
sharper and the condition on the size of $\lambda$ is less restrictive.

\appendix

\section{Well-posedness of the penalty method}
\label{sec:penalty-well-posedness-proofs}

In this section, we prove \cref{thm:saddle-discrete-continuity}.
We begin with some auxiliary results.

\subsection{Auxiliary results}
	
We first note that the inf-sup condition \cref{eqn:b-inf-sup-discrete} 
gives the following lower bound for the operator norm of the adjoint
operator $\operator{D}^{\star}$:
\begin{align}
    \label{eqn:d-adjoint-inf-sup}
    \beta_X \|q\|_Q \leq \sup_{v \in \discrete{V}} 
    \frac{(\operator{D} v, q)_Q}{ \|v\|_V  } = \sup_{v \in \discrete{V}} 
    \frac{(\operator{D}^{\star} q)(v)}{ \|v\|_V  } = 
    \|\operator{D}^{\star} q\|_{\dual{\discrete{V}}}
    \qquad \forall q \in \discrete{Q}.
\end{align}
We also recall \cite[Lemma 4.2.1]{BoffiBrezziFortin13}: if 
$a(\cdot,\cdot)$ is symmetric and nonnegative, then
\begin{alignat}{2}
    \label{eqn:weak-cauchy-schwarz}
    a(v, w)^2 &\leq a(v, v) a(w, w) \qquad & &\forall v, w 
    \in \discrete{V}, \\
    \label{eqn:a-sym-nonneg-av-dual-by-a}
    \|\operator{A} v\|_{\dual{\discrete{V}}}^2 &\leq M_a a(v, v) 
    \qquad & &\forall v \in \discrete{V}.
\end{alignat}
Using the shorthand notation $|\cdot|_a^2 := a(\cdot,\cdot)$,
\cref{eqn:weak-cauchy-schwarz} shows that $|\cdot|_a$ is a semi-norm
if $a(\cdot,\cdot)$ satisfies \ref{asmp:a-sym-nonneg-coercive-z}
and a norm if $a(\cdot,\cdot)$ satisfies \ref{asmp:a-coercive-all}.
    
The next result shows that 
$\discrete{Z}$ and $\tilde{\discrete{Z}}$ (or $\hat{\discrete{Z}}$) are 
a stable decomposition of $\discrete{V}$.	
\begin{lemma}
    \label{lem:v-z-tildez-decomp}
    Let $\check{\discrete{Z}} = \tilde{\discrete{Z}}$ or 
    $\check{\discrete{Z}} = 
    \hat{\discrete{Z}}$. Then, there holds $\discrete{V} = \discrete{Z} 
    \oplus 
    \check{\discrete{Z}}$. In 
    particular, for every $u \in \discrete{V}$, there exists unique $z \in 
    \discrete{Z}$ and 
    $\check{z} \in \check{\discrete{Z}}$ such that $u = z + \check{z}$ and
    \begin{subequations}
        \label{eqn:v-z-tildez-decomp-cont}	
        \begin{alignat}{4}
            \label{eqn:v-z-tildez-decomp-cont-v}
            \|z\|_V &\leq \Phi_X \|u\|_V   
            \quad & &\text{and} \quad & \|\check{z}\|_V &\leq \Upsilon_X 
            \|u\|_V, \qquad & & \\
            \label{eqn:v-z-tildez-decomp-cont-a}
            |z|_a &\leq |u|_a \quad & &\text{and} \quad & |\check{z}|_a 
            &\leq 
            |u|_a \qquad & &\text{if \ref{asmp:a-sym-nonneg-coercive-z} 
                holds},
        \end{alignat}	
    \end{subequations}
    where $\Phi_X$ and $\Upsilon_X$ are defined in 
    \cref{eqn:phix-upsilonx-def}. 
\end{lemma}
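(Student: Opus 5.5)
Fix $\check{\discrete{Z}} = \tilde{\discrete{Z}}$; the case $\hat{\discrete{Z}}$ is identical with the arguments of $a(\cdot,\cdot)$ swapped, which is harmless because \cref{eqn:a-infsup-kernel-discrete} gives the same constant $\alpha_X$ in both orders.

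\textbf{Decomposition.} Given $u \in \discrete{V}$, define $z \in \discrete{Z}$ by
\begin{align*}
a(z, w) = a(u, w) \qquad \forall w \in \discrete{Z}.
\end{align*}
This problem is uniquely solvable on the finite-dimensional space $\discrete{Z}$ by \cref{eqn:a-infsup-kernel-discrete}. Set $\check z := u - z$; by construction $a(\check z, w) = 0$ for all $w \in \discrete{Z}$, so $\check z \in \tilde{\discrete{Z}}$. For uniqueness, take $z \in \discrete{Z} \cap \tilde{\discrete{Z}}$. Then $a(z, w) = 0$ for all $w \in \discrete{Z}$, and the inf-sup condition forces $z = 0$. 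Hence $\discrete{V} = \discrete{Z} \oplus \tilde{\discrete{Z}}$.

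\textbf{General case of \cref{eqn:v-z-tildez-decomp-cont-v}.} Using the inf-sup condition and the definition of $z$,
\begin{align*}
\alpha_X \|z\|_V \leq \sup_{w \in \discrete{Z}} \frac{a(z,w)}{\|w\|_V} = \sup_{w \in \discrete{Z}} \frac{a(u,w)}{\|w\|_V} \leq M_a \|u\|_V .
\end{align*}
This gives $\|z\|_V \leq \Phi_X \|u\|_V$ with $\Phi_X = M_a/\alpha_X$. The triangle inequality then gives $\|\check z\|_V \leq (1+\Phi_X)\|u\|_V = \Upsilon_X \|u\|_V$.

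\textbf{Case (A2).} Here $\Phi_X = M_a/\tilde\alpha_X$, and the bound on $z$ follows by testing with $w = z$:
\begin{align*}
\tilde\alpha_X \|z\|_V^2 \leq a(z,z) = a(u,z) \leq M_a \|u\|_V \|z\|_V .
\end{align*}
The claim $\|\check z\|_V \leq \Phi_X \|u\|_V$ is sharper than what the triangle inequality gives, so it needs a separate argument. Note that $a(\check z, z) = 0$ because $z \in \discrete{Z}$. Therefore
\begin{align*}
\tilde\alpha_X \|\check z\|_V^2 \leq a(\check z, \check z) = a(\check z, \check z + z) = a(\check z, u) \leq M_a \|\check z\|_V \|u\|_V ,
\end{align*}
which gives $\|\check z\|_V \leq (M_a/\tilde\alpha_X)\|u\|_V$. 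For $\hat{\discrete{Z}}$ the same computation works using $a(z, \hat z) = 0$ instead.

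\textbf{Case (A1).} Here $a(\cdot,\cdot)$ is symmetric, so $\tilde{\discrete{Z}} = \hat{\discrete{Z}}$, and the decomposition is $a$-orthogonal. Pythagoras gives
\begin{align*}
|u|_a^2 = |z|_a^2 + |\check z|_a^2 ,
\end{align*}
which is exactly \cref{eqn:v-z-tildez-decomp-cont-a}. Combining with the coercivity of $a(\cdot,\cdot)$ on $\discrete{Z}$ yields
\begin{align*}
\tilde\alpha_X \|z\|_V^2 \leq |z|_a^2 \leq |u|_a^2 \leq M_a \|u\|_V^2 ,
\end{align*}
so $\|z\|_V \leq \sqrt{M_a/\tilde\alpha_X}\,\|u\|_V = \Phi_X \|u\|_V$. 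Finally, the triangle inequality gives $\|\check z\|_V \leq (1+\Phi_X)\|u\|_V = \Upsilon_X\|u\|_V$.

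The main point of care is that the kernel decomposition must be defined through $a$-projection rather than $V$-orthogonality, and that each case uses the matching definition of $\Phi_X$ and $\Upsilon_X$. The only slightly delicate step is the improved bound on $\check z$ under (A2), which relies on the one-sided orthogonality $a(\check z, z) = 0$ (respectively $a(z, \hat z) = 0$) instead of the triangle inequality.
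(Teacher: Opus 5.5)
Your decomposition, uniqueness argument, general-case bounds and \ref{asmp:a-coercive-all} bounds coincide with the paper's proof. Under \ref{asmp:a-sym-nonneg-coercive-z} you use the identity $|u|_a^2 = |z|_a^2 + |\check z|_a^2$ where the paper uses the Cauchy--Schwarz inequality \cref{eqn:weak-cauchy-schwarz}; the two are equivalent.

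\textbf{The gap.} You treat \ref{asmp:a-sym-nonneg-coercive-z} and \ref{asmp:a-coercive-all} as mutually exclusive. When both hold, i.e.\ under \ref{asmp:a-sym-coercive-all}, \cref{eqn:phix-upsilonx-def} gives $\Phi_X = \sqrt{M_a/\tilde\alpha_X}$ (first case). It also gives $\Upsilon_X = \Phi_X$, because \ref{asmp:a-coercive-all} holds. So the lemma asserts
\[
\|\check z\|_V \le \sqrt{M_a/\tilde\alpha_X}\,\|u\|_V .
\]
This is the value of $\Upsilon_X$ used in the \ref{asmp:a-sym-coercive-all} column of \cref{tab:saddle-perturb-cont-const}.

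Your \ref{asmp:a-sym-nonneg-coercive-z} paragraph still gives the correct bound on $z$ in this case. Neither paragraph gives the bound on $\check z$:
\begin{itemize}
\item In your \ref{asmp:a-sym-nonneg-coercive-z} paragraph, the identification $1+\Phi_X = \Upsilon_X$ is false here.
\item Your \ref{asmp:a-coercive-all} paragraph sets $\Phi_X = M_a/\tilde\alpha_X$, which is not the value here. It only yields the constant $M_a/\tilde\alpha_X$, and $M_a/\tilde\alpha_X \ge \sqrt{M_a/\tilde\alpha_X}$ since $M_a \ge \tilde\alpha_X$.
\end{itemize}

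\textbf{The repair.} It uses only what you already have. Under \ref{asmp:a-sym-coercive-all}, your identity gives $|\check z|_a \le |u|_a$, and coercivity on all of $\discrete{V}$ then yields
\begin{align*}
\tilde\alpha_X \|\check z\|_V^2 \le |\check z|_a^2 \le |u|_a^2 \le M_a \|u\|_V^2 .
\end{align*}
This is precisely Step 3 of the paper's proof. With this case added, your proof is complete.
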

\begin{proof}
    \noindent \textbf{Step 1: General case. } Suppose first that $u \in 
    \discrete{Z} \cap 
    \tilde{\discrete{Z}}$. Then, \cref{eqn:a-infsup-kernel-discrete} gives
    \begin{align*}
        \alpha_X \|u\|_{V} \leq \sup_{w \in \discrete{Z}} 
        \frac{a(u, w)}{\|w\|_V} = 0 \implies u \equiv 0. 
    \end{align*}		
    Similarly, if $u \in \discrete{Z} \cap \hat{\discrete{Z}}$, then
    replacing $a(u, w)$ with $a(w, u)$ above gives $u \equiv 0$.
    Consequently, ${\discrete{Z} \cap \tilde{\discrete{Z}} = \discrete{Z} 
    \cap \hat{\discrete{Z}} = \{ 0 \}}$.
    
    Now let $u \in \discrete{V}$. 
    Thanks to \cref{eqn:a-infsup-kernel-discrete} and the boundedness of 
    $a(\cdot,\cdot)$, there exists $z \in \discrete{Z}$ such that ${a(z 
    - u, w) = 0}$ for all $w \in \discrete{Z}$ and 
    \begin{align*}
        \alpha_X \|z\|_V \leq \sup_{w \in \discrete{Z}} 
        \frac{a(z, w)}{\|w\|_V} = \sup_{w \in \discrete{Z}} 
        \frac{a(u, w)}{\|w\|_V} \leq M_a \|u\|_V \implies \|z\|_V \leq 
        \frac{M_a}{\alpha_X} \|u\|_V.
    \end{align*}
    Then, $\tilde{z} := u - z$ satisfies $\tilde{z} \in 
    \tilde{\discrete{Z}}$ 
    by construction and \cref{eqn:v-z-tildez-decomp-cont} follows from the 
    triangle inequality. If we instead take $z\in \discrete{Z}$ to satisfy 
    $a(w, z-u) = 0$
    for all $w\in \discrete{Z}$, 
    then the 
    analogous arguments show that $\hat{z} := u - z$ satisfies $\hat{z} \in 
    \hat{\discrete{Z}}$ and $\|\hat{z}\|_V \leq \Phi_X \|u\|_V$. \\
    
    \noindent \textbf{Step 2: $a(\cdot,\cdot)$ satisfies 
        \ref{asmp:a-sym-nonneg-coercive-z}. } We use 
    \cref{eqn:weak-cauchy-schwarz} to refine our estimate for $z$:
    \begin{align*}
        |z|_a^2 = a(u, z) \leq 
        |z|_a |u|_a \implies |z|_a \leq |u|_a. 
    \end{align*} 
    Similarly, 
    \begin{align*}
        |\tilde{z}|_a^2 &= a(\tilde{z}, u - z) = a(\tilde{z}, u) \leq 
        |\tilde{z}|_a 
        |u|_a \implies |\tilde{z}|_a \leq |u|_a.
    \end{align*}
    Inequality \cref{eqn:a-nonnegative-coercive-kernel} then gives
    \begin{align*}
        \|z\|_V \leq \sqrt{\frac{M_a}{\tilde{\alpha}_X}} \|u\|_V
        \implies \|\check{z}\|_V \leq \|z\|_V + \|u\|_V \leq  
        \left( 1 + \sqrt{\frac{M_a}{\tilde{\alpha}_X}} \right) 
        \| u \|_{V}.
    \end{align*}
    
    \noindent \textbf{Step 3: $a(\cdot,\cdot)$ satisfies 
        \ref{asmp:a-sym-coercive-all}. } Using 
    \cref{eqn:a-coercive-all}, we have
    \begin{align*}
        \| \tilde{z} \|_{V} 
        \leq \frac{1}{\sqrt{\tilde{\alpha}_X}} |\tilde{z}|_a 
        \leq \frac{1}{\sqrt{\tilde{\alpha}_X}} |u|_a 
        \leq  \sqrt{\frac{M_a}{\tilde{\alpha}_X}} \|u\|_V,
    \end{align*}
    with the analogous estimate for $z$. \\
    
    \noindent \textbf{Step 4: $a(\cdot,\cdot)$ satisfies 
    \ref{asmp:a-coercive-all}. } By coercivity \cref{eqn:a-coercive-all}, 
    we have
    \begin{align*}
        \tilde{\alpha}_X \|z\|_{V}^2 \leq a(z, z) = a(u, z) \leq M_a 
        \|u\|_V \|z\|_V \implies \|z\|_V \leq \frac{M_a}{\tilde{\alpha}_X} 
        \|u\|_V.
    \end{align*}
    Similarly, we have
    \begin{align*}
        |\tilde{z}|_a^2 &= a(\tilde{z}, u - z) = a(\tilde{z}, u) \leq 
        M_a \|\tilde{z}\|_V \|u\|_V \implies \|\tilde{z}\|_V \leq 
        \Upsilon_X \|u\|_V, \\
        |\hat{z}|_a^2 &= a(u-z,\hat{z}) = a(u, \hat{z}) \leq 
        M_a \|\hat{z}\|_V \|u\|_V \implies \|\hat{z}\|_V \leq \Upsilon_X 
        \|u\|_V.
    \end{align*}
\end{proof}	

Since $\discrete{Z}$ is the kernel of $\operator{D} : \discrete{V} \to 
\discrete{Q}$, the quantity $\|\operator{D} \cdot\|_{Q}$ is a norm on 
$\tilde{\discrete{Z}}$ and $\hat{\discrete{Z}}$ equivalent to 
$\|\cdot\|_{V}$ with equivalence constants given by the following result.	
\begin{lemma}
    \label{lem:tilde-z-div-continuity}
    Let $\check{\discrete{Z}} = \tilde{\discrete{Z}}$ or 
    $\check{\discrete{Z}} = 
    \hat{\discrete{Z}}$. Then, $\operator{D} : \check{\discrete{Z}} \to 
    \discrete{Q}$ is bijective
    and there holds
    \begin{align}
        \label{eqn:tilde-z-div-continuity}
        \frac{1}{M_{\operator{D}}} \|\operator{D} \check{z}\|_Q \leq 
        \|\check{z}\|_V \leq \frac{\Upsilon_X}{\beta_X}  \|\operator{D} 
        \check{z}\|_{Q} \qquad \forall 
        \check{z} \in \check{\discrete{Z}},
    \end{align}
    where $M_{\operator{D}}$ is defined in \cref{eqn:a-d-bounded},
    $\Upsilon_X$ is defined in \cref{eqn:phix-upsilonx-def},
    and $\beta_X$ is defined in \cref{eqn:b-inf-sup-discrete}.
    Moreover, if 
    \ref{asmp:a-sym-nonneg-coercive-z} holds, then
    \begin{align}
        \label{eqn:tilde-z-div-continuity-a-sspd}
        |\check{z}|_a \leq \frac{\sqrt{M_a}}{ \beta_X} \|\operator{D} 
        \check{z}\|_{Q} \qquad \forall 
        \check{z} \in \check{\discrete{Z}}.
    \end{align}
\end{lemma}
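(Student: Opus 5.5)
The lower bound in \cref{eqn:tilde-z-div-continuity} is immediate from the definition of $M_{\operator{D}}$ in \cref{eqn:a-d-bounded}. The main work is the upper bound. I will obtain it by combining surjectivity of $\operator{D}$ with the stable splitting of \cref{lem:v-z-tildez-decomp}.

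Fix $\check{z} \in \check{\discrete{Z}}$. By \cref{eqn:b-inf-sup-discrete} and the structure-preserving property $\operator{D}\discrete{V} = \discrete{Q}$, there is a $w \in \discrete{V}$ with $\operator{D} w = \operator{D}\check{z}$ and $\|w\|_V \le \beta_X^{-1}\|\operator{D}\check{z}\|_Q$. One can take $w$ to be the minimal-norm preimage, i.e.\ the element of $\discrete{Z}^{\perp}$; the inf-sup constant then bounds $\|w\|_V$.

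Now decompose $w = z + \check{w}$ with $z \in \discrete{Z}$ and $\check{w} \in \check{\discrete{Z}}$, using \cref{lem:v-z-tildez-decomp}. Since $\operator{D} z = 0$, we have $\operator{D}\check{w} = \operator{D}\check{z}$. Hence $\check{w} - \check{z} \in \discrete{Z} \cap \check{\discrete{Z}} = \{0\}$, where the trivial intersection was established in Step 1 of that lemma's proof. Therefore $\check{z} = \check{w}$, and
\[
\|\check{z}\|_V \le \Upsilon_X \|w\|_V \le \frac{\Upsilon_X}{\beta_X}\|\operator{D}\check{z}\|_Q .
\]

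Bijectivity follows from the same two facts. Injectivity holds because the kernel of $\operator{D}$ restricted to $\check{\discrete{Z}}$ is $\discrete{Z}\cap\check{\discrete{Z}} = \{0\}$. Surjectivity holds because, for any $q \in \discrete{Q}$, we pick a preimage $w$ and take its $\check{\discrete{Z}}$-component.

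For \cref{eqn:tilde-z-div-continuity-a-sspd}, I repeat the argument but use the seminorm bound \cref{eqn:v-z-tildez-decomp-cont-a} in place of the norm bound:
\[
|\check{z}|_a = |\check{w}|_a \le |w|_a \le \sqrt{M_a}\,\|w\|_V \le \frac{\sqrt{M_a}}{\beta_X}\|\operator{D}\check{z}\|_Q .
\]
Here $|w|_a^2 = a(w,w) \le M_a\|w\|_V^2$ by continuity of $a(\cdot,\cdot)$.

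The only delicate point is the identification $\check{z} = \check{w}$. It relies on the trivial intersection $\discrete{Z}\cap\check{\discrete{Z}} = \{0\}$ from \cref{lem:v-z-tildez-decomp}, which holds for both $\check{\discrete{Z}} = \tilde{\discrete{Z}}$ and $\check{\discrete{Z}} = \hat{\discrete{Z}}$, so no extra work is expected.
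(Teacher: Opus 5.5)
Your proof is correct and is essentially the paper's argument: take a preimage $w$ of $\operator{D}\check{z}$ with $\|w\|_V \le \beta_X^{-1}\|\operator{D}\check{z}\|_Q$, split it via \cref{lem:v-z-tildez-decomp}, identify its $\check{\discrete{Z}}$-component with $\check{z}$ using $\discrete{Z}\cap\check{\discrete{Z}}=\{0\}$, and then apply \cref{eqn:v-z-tildez-decomp-cont-v} (or, under \ref{asmp:a-sym-nonneg-coercive-z}, \cref{eqn:v-z-tildez-decomp-cont-a} together with $a(w,w)\le M_a\|w\|_V^2$). The only cosmetic difference is that you get injectivity directly from the trivial intersection, whereas the paper reads it off from the upper bound in \cref{eqn:tilde-z-div-continuity}.
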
	
\begin{proof}
    Since $\operator{D} : \discrete{V} \to \discrete{Q}$ is surjective by 
    assumption, $\operator{D} : \check{\discrete{Z}} \to \discrete{Q}$ is 
    surjective. Injectivity will follow once we establish 
    \cref{eqn:tilde-z-div-continuity}.
    
    Let $\check{z} \in \check{\discrete{Z}}$. Thanks to 
    \cref{eqn:b-inf-sup-discrete},
    there exists $v \in \discrete{V}$ such that $\operator{D} v = 
    \operator{D} \check{z}$ and
    \begin{align}
        \label{eqn:proof:invert-d}
        \|v\|_V \leq \frac{1}{\beta_X} \|\operator{D} \check{z} \|_Q 
        \implies a(v, v) \leq \frac{M_a}{\beta_X^2} \|\operator{D} 
        \check{z} \|_Q^2.
    \end{align}
    Let $v = w + \check{w}$ with $w \in \discrete{Z}$ and $\check{w} \in 
    \check{\discrete{Z}}$ be given by \cref{lem:v-z-tildez-decomp}. Since 
    $\check{z} - \check{w} \in \discrete{Z} \cap \tilde{\discrete{Z}}$, 
    $\check{w} = \check{z}$. The rightmost inequality 
    \cref{eqn:tilde-z-div-continuity} now 
    follows from \cref{eqn:proof:invert-d,eqn:v-z-tildez-decomp-cont-v}. 
    Similarly, if \ref{asmp:a-sym-nonneg-coercive-z} holds, then 
    \cref{eqn:tilde-z-div-continuity-a-sspd} follows from 
    \cref{eqn:proof:invert-d,eqn:v-z-tildez-decomp-cont-a}. 
    The leftmost inequality in \cref{eqn:tilde-z-div-continuity} 
    is a restatement of the continuity of $\operator{D}$.
\end{proof}

The next result	will be useful in a number of proofs below.
\begin{lemma}
    Suppose that $\tilde{z} \in \tilde{\discrete{Z}}$ and $\hat{z} \in 
    \hat{\discrete{Z}}$ satisfy $\operator{D} \tilde{z} = \operator{D} 
    \hat{z}$. Then, there holds
    \begin{align}
        \label{eqn:interchange-tilde-hat-same-d}
        a(\tilde{z}, \tilde{z}) = a(\tilde{z}, \hat{z}) = a(\hat{z}, 
        \hat{z}).
    \end{align}
\end{lemma}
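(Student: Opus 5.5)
The idea is that $\tilde{z}$ and $\hat{z}$ differ by an element of the kernel: since $\operator{D} \tilde{z} = \operator{D} \hat{z}$, the difference $w := \tilde{z} - \hat{z}$ satisfies $\operator{D} w = 0$, so $w \in \discrete{Z}$. This is the only structural input needed. The two claimed equalities should then follow from the defining orthogonality relations of $\tilde{\discrete{Z}}$ and $\hat{\discrete{Z}}$, applied with test function $w$. No symmetry or positivity of $a(\cdot,\cdot)$ is assumed, so I will keep careful track of which slot each orthogonality acts on.

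For the first equality, I write $a(\tilde{z}, \tilde{z}) - a(\tilde{z}, \hat{z}) = a(\tilde{z}, w)$. By definition, $\tilde{\discrete{Z}}$ consists of $v$ with $a(v, z) = 0$ for all $z \in \discrete{Z}$. Hence $a(\tilde{z}, w) = 0$, which gives $a(\tilde{z}, \tilde{z}) = a(\tilde{z}, \hat{z})$.

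For the second equality, I write $a(\tilde{z}, \hat{z}) - a(\hat{z}, \hat{z}) = a(w, \hat{z})$. By definition, $\hat{\discrete{Z}}$ consists of $v$ with $a(z, v) = 0$ for all $z \in \discrete{Z}$. Taking $z = w$ gives $a(w, \hat{z}) = 0$, so $a(\tilde{z}, \hat{z}) = a(\hat{z}, \hat{z})$.

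There is no substantive obstacle here. The only point requiring care is the slot bookkeeping: the $\tilde{\discrete{Z}}$ condition annihilates $\discrete{Z}$ in the second argument, and the $\hat{\discrete{Z}}$ condition annihilates it in the first. The splittings above are chosen so that each orthogonality is used in exactly the slot where it holds.
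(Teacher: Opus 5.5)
Your proof is correct and follows essentially the same route as the paper: note that $\tilde{z} - \hat{z} \in \discrete{Z}$, then split $a(\tilde{z}, \tilde{z})$ and $a(\tilde{z}, \hat{z})$ so that the $\tilde{\discrete{Z}}$-orthogonality kills $a(\tilde{z}, \tilde{z} - \hat{z})$ and the $\hat{\discrete{Z}}$-orthogonality kills $a(\tilde{z} - \hat{z}, \hat{z})$. Your slot bookkeeping matches the definitions exactly.
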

\begin{proof}
    Since $\tilde{z} - \hat{z} \in \discrete{Z}$, there holds
    \begin{align*}
        a(\tilde{z}, \tilde{z}) = a(\tilde{z}, \hat{z}) + a(\tilde{z}, 
        \tilde{z} - \hat{z}) = a(\tilde{z}, \hat{z}) = a(\tilde{z} - 
        \hat{z}, 
        \hat{z}) + a(\hat{z}, \hat{z}) = a(\hat{z}, \hat{z}).
    \end{align*}
\end{proof}

The following result shows that $a_{\epsilon}(\cdot,\cdot)$ is coercive on 
$\tilde{\discrete{Z}}$ and $\hat{\discrete{Z}}$ for $\epsilon$ sufficiently 
small.	
\begin{lemma}
    \label{lem:tilde-z-eps-coercivity}
    Let $\epsilon_0$ be defined as in \cref{eqn:eps0-def}.
    For all $\epsilon \in (0, \epsilon_0)$, there holds
    \begin{align}
        \label{eqn:tilde-z-aeps-coercivity}
        a_{\epsilon}(\check{z}, \check{z}) \geq \left( \frac{1}{\epsilon} - 
        \frac{1}{\epsilon_0} \right) \|\operator{D} \check{z} \|_{Q}^2 
        \geq M_a \left( \frac{\epsilon_0 - \epsilon}{\epsilon} \right) 
        \|\check{z}\|_{V}^2 
        \qquad \forall \check{z} \in \tilde{\discrete{Z}} \cup 
        \hat{\discrete{Z}}.
    \end{align}
    If $a(\cdot,\cdot)$ satisfies \ref{asmp:a-sym-nonneg-coercive-z}, then 
    for all $\epsilon >0$, 
    there holds
    \begin{align}
        \label{eqn:tilde-z-aeps-coercivity-sym-nonneg-coercive-z}
        a_{\epsilon}(\tilde{z}, \tilde{z}) \geq \frac{1}{\epsilon} 
        \|\operator{D} 
        \tilde{z}\|_{Q}^2 \geq \frac{1}{\epsilon} \left( 
        \frac{\beta_X}{\Upsilon_X} \right)^2 \|\tilde{z}\|_{V}^2 \qquad 
        \forall \tilde{z} \in 
        \tilde{\discrete{Z}}.
    \end{align}
    If $a(\cdot,\cdot)$ satisfies \ref{asmp:a-coercive-all}, then for all
    $\epsilon > 0$, there holds
    \begin{subequations}
        \begin{alignat}{2}
            \label{eqn:tilde-z-aeps-coercivity-coercive-all-d}
            a_{\epsilon}(\check{z}, \check{z}) &\geq \left( 
            \frac{\tilde{\alpha}_X}{ 
                M_{\operator{D}}^2} + \frac{1}{\epsilon} \right) 
            \|\operator{D} \check{z} \|_{Q}^2 \qquad & &\forall \check{z} 
            \in \tilde{\discrete{Z}} \cup 
            \hat{\discrete{Z}} \\
            \label{eqn:tilde-z-aeps-coercivity-coercive-all-v}
            a_{\epsilon}(\check{z}, \check{z}) &\geq 
            \left( \tilde{\alpha}_X + \frac{1}{\epsilon} 
            \left( 
            \frac{\beta_X}{\Upsilon_X} \right)^2 \right) 
            \|\check{z}\|_{V}^2 
            \qquad & &\forall \check{z} \in \tilde{\discrete{Z}} \cup 
            \hat{\discrete{Z}},
        \end{alignat}
    \end{subequations}
    while if $a(\cdot,\cdot)$ satisfies \ref{asmp:a-sym-coercive-all}, then 
    we also have
    \begin{align}
        \label{eqn:tilde-z-aeps-coercivity-sym-coercive-all-v-a}
        a_{\epsilon}(\tilde{z}, \tilde{z}) &\geq \left( 1 + 
        \frac{1}{\epsilon} \left( \frac{\beta_X}{\sqrt{M_a}} \right)^2 
        \right) |\tilde{z}|_{a}^2
        \qquad \forall \tilde{z} \in \tilde{\discrete{Z}}.
    \end{align}
\end{lemma}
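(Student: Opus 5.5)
The plan is to expand $a_{\epsilon}(\check{z},\check{z}) = a(\check{z},\check{z}) + \epsilon^{-1}\|\operator{D}\check{z}\|_Q^2$ and bound the first term from below in each case, converting between $\|\check{z}\|_V$, $|\check{z}|_a$ and $\|\operator{D}\check{z}\|_Q$ with \cref{lem:tilde-z-div-continuity}.

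\textbf{General case.} We only use continuity:
\[
a(\check{z},\check{z}) \geq -M_a\|\check{z}\|_V^2 .
\]
By the right inequality in \cref{eqn:tilde-z-div-continuity} this is at least $-M_a\Upsilon_X^2\beta_X^{-2}\|\operator{D}\check{z}\|_Q^2 = -\epsilon_0^{-1}\|\operator{D}\check{z}\|_Q^2$, using the definition \cref{eqn:eps0-def} of $\epsilon_0$ in the ``otherwise'' case. This gives the first inequality in \cref{eqn:tilde-z-aeps-coercivity}. For the second, apply the same estimate in reverse:
\[
\|\operator{D}\check{z}\|_Q^2 \geq \beta_X^2\Upsilon_X^{-2}\|\check{z}\|_V^2 = M_a\epsilon_0\|\check{z}\|_V^2 .
\]
Then $(\epsilon^{-1}-\epsilon_0^{-1})M_a\epsilon_0 = M_a(\epsilon_0-\epsilon)/\epsilon$. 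If $\epsilon_0=\infty$ (that is, (A1) or (A2) holds), the claim is covered by the stronger bounds below; this formally reads as $\epsilon^{-1}\|\operator{D}\check{z}\|_Q^2$ on the right.

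\textbf{Under (A1).} Here $a(\tilde{z},\tilde{z})\geq 0$, so $a_{\epsilon}(\tilde{z},\tilde{z})\geq \epsilon^{-1}\|\operator{D}\tilde{z}\|_Q^2$. Then \cref{eqn:tilde-z-div-continuity} again gives $\|\operator{D}\tilde{z}\|_Q \geq (\beta_X/\Upsilon_X)\|\tilde{z}\|_V$, which proves \cref{eqn:tilde-z-aeps-coercivity-sym-nonneg-coercive-z}.

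\textbf{Under (A2).} We have $a(\check{z},\check{z})\geq\tilde{\alpha}_X\|\check{z}\|_V^2$ for every $\check{z}$, since coercivity holds on all of $\discrete{V}$.
\begin{itemize}
\item Combining this with $\|\check{z}\|_V\geq M_{\operator{D}}^{-1}\|\operator{D}\check{z}\|_Q$ (the left inequality of \cref{eqn:tilde-z-div-continuity}) gives \cref{eqn:tilde-z-aeps-coercivity-coercive-all-d}.
\item Combining it instead with $\|\operator{D}\check{z}\|_Q\geq(\beta_X/\Upsilon_X)\|\check{z}\|_V$ in the penalty term gives \cref{eqn:tilde-z-aeps-coercivity-coercive-all-v}.
\end{itemize}

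\textbf{Under (A3).} (A3) implies (A1), so \cref{eqn:tilde-z-div-continuity-a-sspd} applies:
\[
\|\operator{D}\tilde{z}\|_Q^2 \geq \beta_X^2 M_a^{-1}|\tilde{z}|_a^2 .
\]
Adding $a(\tilde{z},\tilde{z})=|\tilde{z}|_a^2$ yields \cref{eqn:tilde-z-aeps-coercivity-sym-coercive-all-v-a}.

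The only delicate point is the general case. Here $a(\check{z},\check{z})$ may be negative, and the argument relies on the norm equivalence on $\check{\discrete{Z}}$ from \cref{lem:tilde-z-div-continuity}, where the constant $\Upsilon_X$ from \cref{lem:v-z-tildez-decomp} appears. This is exactly what makes the threshold equal to $\epsilon_0$; the rest is bookkeeping.
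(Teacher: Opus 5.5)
Your proof is correct and is essentially the paper's argument. You write $a_\epsilon(\check z,\check z)=a(\check z,\check z)+\epsilon^{-1}\|\operator{D}\check z\|_Q^2$, bound $a(\check z,\check z)$ below by $-M_a\|\check z\|_V^2$, $0$, or $\tilde\alpha_X\|\check z\|_V^2$ according to the assumption, and convert between $\|\check z\|_V$, $|\check z|_a$ and $\|\operator{D}\check z\|_Q$ via \cref{lem:tilde-z-div-continuity} and continuity of $\operator{D}$. Your remark on $\epsilon_0=\infty$, where only the first bound, read as $\epsilon^{-1}\|\operator{D}\check z\|_Q^2$, is meaningful, addresses a degenerate case that the paper's one-line proof leaves implicit by writing $M_a\Upsilon_X^2/\beta_X^2$ in place of $1/\epsilon_0$.
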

\begin{proof}
    Thanks to 
    \cref{eqn:tilde-z-div-continuity}, there holds
    \begin{align*}
        a_{\epsilon}(\check{z}, \check{z}) = a(\check{z}, \check{z}) + 
        \frac{1}{\epsilon} \| 
        \operator{D} \check{z} \|_{Q}^2 
        &\geq \left( \frac{1}{\epsilon} - \frac{M_a 
        \Upsilon_X^2}{\beta_X^2} \right) 
        \|\operator{D} \check{z} \|_{Q}^2 
        \geq  \left( \frac{1}{\epsilon}  
        \left( 
        \frac{\beta_X}{\Upsilon_X} \right)^2 - M_a \right) 
        \|\check{z}\|_{V}^2 
        \qquad \forall \check{z} \in \tilde{\discrete{Z}} \cup 
        \hat{\discrete{Z}},
    \end{align*}
    which is exactly \cref{eqn:tilde-z-aeps-coercivity}. Inequalities 
    \cref{eqn:tilde-z-aeps-coercivity-sym-nonneg-coercive-z},
    \cref{eqn:tilde-z-aeps-coercivity-coercive-all-v},
    and \cref{eqn:tilde-z-aeps-coercivity-sym-coercive-all-v-a}
    similarly follow from 
    \cref{eqn:tilde-z-div-continuity,eqn:tilde-z-div-continuity-a-sspd,%
        eqn:a-coercive-all}, 
    while inequality 
    \cref{eqn:tilde-z-aeps-coercivity-coercive-all-d} follows from 
    \cref{eqn:a-coercive-all,eqn:a-d-bounded}.
\end{proof}

In each remaining subsections, we will prove 
\cref{thm:saddle-discrete-continuity} under each assumption
on $a(\cdot,\cdot)$.

\subsection{Proof of \cref{thm:saddle-discrete-continuity} for general 
$a(\cdot,\cdot)$}
\label{sec:saddle-discrete-continuity-proof-a-gen}

Uniqueness of solutions to \cref{eqn:saddle-discrete-penalty} will follow
from \cref{eqn:saddle-perturb-cont}. 
For the sake of \cref{rem:results-valid-v-q},
we present an existence argument that is valid independent on the dimension 
of $\discrete{V}$. The case 
$\epsilon = 0$ follows from standard Babu\v{s}ka-Brezzi theory 
\cite{Babuska71,Brezzi74,Necas62}, so suppose $\epsilon > 0$.
Thanks to \cref{eqn:a-infsup-kernel-discrete} 
(or \cref{eqn:a-inf-sup-kernel} if $\discrete{V} = V$), there 
exists $z \in \discrete{Z}$ such that 
\begin{align*}
    a(z, w) = F(w) \qquad \forall w \in \discrete{Z}.
\end{align*}
On appealing to \cref{lem:tilde-z-eps-coercivity,lem:tilde-z-div-continuity} 
and \cref{eqn:interchange-tilde-hat-same-d}, there exists 
$\varpi > 0$ such that 
\begin{align*}
    \varpi < \newinf_{\tilde{v} \in \tilde{\discrete{Z}}} 
                \sup_{\hat{w} \in \hat{\discrete{Z}}}
                \frac{a_{\epsilon}(\tilde{v}, \hat{w})}{
                    \|\tilde{v}\|_V \|\hat{w}\|_V}
    \quad \text{and} \quad 
    \varpi < \newinf_{\hat{w} \in \hat{\discrete{Z}}} 
                \sup_{\tilde{v} \in \tilde{\discrete{Z}}}
                \frac{a_{\epsilon}(\tilde{v}, \hat{w})}{
                \|\tilde{v}\|_V \|\hat{w}\|_V},
\end{align*}
and so Babu\v{s}ka theory \cite{Babuska71,Necas62} shows that
there exists $\tilde{z} \in \tilde{\discrete{Z}}$ such that 
\begin{align*}
    a_{\epsilon}(\tilde{z}, \hat{w}) = F(\hat{w}) 
        + \epsilon^{-1} G(\operator{D} \hat{w}) 
    \qquad \forall \hat{w} \in \hat{\discrete{Z}}.
\end{align*}
One may directly verify that $u_X = z + \tilde{z}$ satisfies 
\cref{eqn:saddle-discrete-penalty-decoupled-1} thanks to
\cref{eqn:interchange-tilde-hat-same-d},
and $p_X$ can be defined via \cref{eqn:saddle-discrete-penalty-decoupled-2}.

We now turn to \cref{eqn:saddle-perturb-cont}.
We treat the cases $\epsilon = 0$ and $\epsilon > 0$ and the cases
$G = 0$ and $F = 0$ separately, and the result will follow by linearity
and the triangle inequality.  Let 
$u_{X, \epsilon} = z_{X, \epsilon}' + \hat{z}_{X, \epsilon}$, 
where $z_{X, \epsilon}' \in \discrete{Z}$ and 
$\hat{z}_{X, \epsilon} \in \hat{\discrete{Z}}$ are given by 
\cref{lem:v-z-tildez-decomp}, and $p_{X, \epsilon} \in \discrete{Q}$
be any solution to \cref{eqn:saddle-discrete-penalty}. \\

\noindent \textbf{Step 1: $\epsilon = 0$. } First consider the case 
$\epsilon = 0$. \\

\noindent \textbf{Part 1: $G = 0$. } Thanks to 
\cref{eqn:saddle-discrete-2}, $u_X 
\in \discrete{Z}$, and so $\tilde{z}_{X} = 0$ and
\begin{align*}
    a(u_X, z) = F(z) \quad \forall z \in \discrete{Z} \implies 
    \|u_X\|_{V} \leq \frac{1}{\alpha_X} \|F\|_{\dual{\discrete{Z}}},
\end{align*}
which corresponds to the value of $C_{X}^{[1]}$ in 
\cref{tab:saddle-perturb-cont-const}.
Thanks to \cref{lem:tilde-z-div-continuity}, there exists $v \in 
\hat{\discrete{Z}}$ satisfying $\operator{D} v = p_X$ and 
\cref{eqn:tilde-z-div-continuity}, and so
\begin{align*}
    \|p_X\|_Q = \frac{(\operator{D} v, p_X)_Q}{\|p_X\|_Q} = 
    \frac{F(v)}{\|p_X\|_Q} \leq 
    \frac{\|v\|_{V}}{ \|p_X\|_Q } \cdot 
    \|F\|_{\dual{\hat{\discrete{Z}}}}  
    \leq \frac{\Upsilon_X}{\beta_X}
    \|F\|_{\dual{\hat{\discrete{Z}}}},
\end{align*} 
which corresponds to the value of $C_{X, 0}^{[3]}$ in 
\cref{tab:saddle-perturb-cont-const}. \\

\noindent \textbf{Part 2: $F = 0$. } Thanks to 
\cref{eqn:saddle-discrete-1},  
$u_X \in \tilde{\discrete{Z}}$ and $\|\operator{D} u_X\|_Q = 
\|G\|_{\dual{\discrete{Q}}}$ and so applying 
\cref{eqn:tilde-z-div-continuity} gives
\begin{align*}
    \|u_X\|_V \leq \frac{\Upsilon_X}{\beta_X} 
    \|\operator{D} 	u_X\|_{Q} = \frac{\Upsilon_X}{\beta_X} \|G
    \|_{\dual{\discrete{Q}}},
\end{align*}
which corresponds to the value of $C_{X, 0}^{[3]}$ in 
\cref{tab:saddle-perturb-cont-const}.
The inf-sup condition \cref{eqn:b-inf-sup-discrete} and 
\cref{eqn:saddle-discrete-penalty-1} then give
\begin{align}
    \label{eqn:proof:px-cont-F-zero}
    \beta_X \|p_X\|_Q \leq \sup_{v \in \discrete{V}} 
    \frac{(\operator{D} v, 
        p_X)_Q}{\|v\|_{V}} = \sup_{v \in \discrete{V}} 
    \frac{a(u_X, 		
        v)}{\|v\|_{V}} \leq M_a \|u_X\|_V \leq \frac{M_a 
        \Upsilon_X}{\beta_X} \|G\|_{\dual{\discrete{Q}}},
\end{align}
which corresponds to the value of $C_{X, 0}^{[4]}$ in 
\cref{tab:saddle-perturb-cont-const}. \\

\noindent \textbf{Step 2: $\epsilon > 0$. } Now consider the case 
$\epsilon > 0$. \\

\noindent \textbf{Part 1: $G = 0$. } Inequality 
\cref{eqn:saddle-perturb-z-cont} 
follows from the same arguments as for $\epsilon = 0$.
Thanks to 
\cref{eqn:tilde-z-aeps-coercivity,eqn:interchange-tilde-hat-same-d}, we 
also have
\begin{align*}
    a_{\epsilon}(u_{X, \epsilon}, \hat{z}_{X, \epsilon}) = 
    a(\tilde{z}_{X, \epsilon}, \hat{z}_{X, \epsilon}) + 
    \frac{1}{\epsilon} (\operator{D} \tilde{z}_{X, \epsilon}, 
    \operator{D} \hat{z}_{X, 
        \epsilon} )
    &= a_{\epsilon}(\tilde{z}_{X, \epsilon}, \tilde{z}_{X, \epsilon}) 
    \geq \left( 
    \frac{1}{\epsilon} - 
    \frac{1}{\epsilon_0} \right) \| \operator{D} \tilde{z}_{X, 
        \epsilon} \|_{Q}^2,
\end{align*}
while \cref{eqn:tilde-z-div-continuity} gives
\begin{align*}
    |F(\hat{z}_{X, \epsilon})| \leq \|F\|_{\dual{\hat{\discrete{Z}}}} 
    \| \hat{z}_{X, 
        \epsilon}\|_{V} 
    \leq \frac{\Upsilon_X}{\beta_X} \|F\|_{\dual{\hat{\discrete{Z}}}} 
    \|\operator{D} 
    \hat{z}_{X, \epsilon}\|_{Q} = \frac{\Upsilon_X}{\beta_X} 
    \|F\|_{\dual{\hat{\discrete{Z}}}} 
    \|\operator{D} \tilde{z}_{X, \epsilon}\|_{Q}.
\end{align*}
Consequently, there holds
\begin{align}
    \label{eqn:proof:d-tilde-z-bound}
    \|\operator{D} \hat{z}_{X, \epsilon}\|_{Q} = \|\operator{D} 
    \tilde{z}_{X, \epsilon}\|_{Q} 
    \leq 
    \frac{\epsilon \Upsilon_X}{\beta_X} \left( 
    \frac{\epsilon_0}{\epsilon_0 - \epsilon} \right) 
    \|F\|_{\dual{\hat{\discrete{Z}}}}
\end{align}
Applying \cref{eqn:tilde-z-div-continuity} once again gives
\begin{align*}
    \max\left\{\| \tilde{z}_{X, \epsilon} \|_V, \| \hat{z}_{X, 
        \epsilon} \|_V \right\} \leq  
    \frac{\epsilon \Upsilon_X^2}{\beta_X^2}  
    \left( 
    \frac{\epsilon_0}{\epsilon_0 - \epsilon} \right) 
    \|F\|_{\dual{\hat{\discrete{Z}}}},
\end{align*}
which corresponds to the value of $C_{X, \epsilon}^{[2]}$ in 
\cref{tab:saddle-perturb-cont-const}.
Thanks to \cref{eqn:saddle-discrete-penalty-decoupled-2}, 
$p_{X, \epsilon} = \epsilon^{-1} \operator{D} \tilde{z}_{X, 
    \epsilon}$, and so the value of $C_{X, \epsilon}^{[3]}$ in 
\cref{tab:saddle-perturb-cont-const} follows from 
\cref{eqn:proof:d-tilde-z-bound}. \\

\noindent \textbf{Part 2: $F = 0$. } The same arguments as in the case 
$\epsilon = 0$ show that $z_{X, \epsilon} \equiv 0$.
Choosing $v \in \tilde{\discrete{Z}}$ in 
\cref{eqn:saddle-discrete-penalty-decoupled-1} and applying 
\cref{eqn:tilde-z-aeps-coercivity} gives
\begin{align*}
    a_{\epsilon}(\tilde{z}_{X, \epsilon}, v) = \epsilon^{-1} 
    G(\operator{D} 
    v) \quad \forall v \in 
    \tilde{\discrete{Z}} \implies \left( \frac{1}{\epsilon} - 
    \frac{1}{\epsilon_0} \right) \|\operator{D} 
    \tilde{z}_{X, \epsilon}\|_Q^2 
    &\leq  
    \frac{1}{\epsilon} \|G\|_{\dual{\discrete{Q}}} \|\operator{D} 
    \tilde{z}_{X, \epsilon}\|_Q, 
\end{align*}
and so \cref{eqn:tilde-z-div-continuity} gives
\begin{align*}
    \|\operator{D} \tilde{z}_{X, \epsilon}\|_Q \leq 
    \frac{\epsilon_0}{\epsilon_0 - 
        \epsilon} \|G\|_{\dual{\discrete{Q}}} 
    \implies
    \|\tilde{z}_{X, \epsilon}\|_V &\leq \frac{\Upsilon_X}{\beta_X} 
    \left(
    \frac{\epsilon_0}{\epsilon_0 - \epsilon} \right) 
    \|G\|_{\dual{\discrete{Q}}},
\end{align*}
which corresponds to the value of $C_{X, \epsilon}^{[3]}$ in 
\cref{tab:saddle-perturb-cont-const}.
The inf-sup condition \cref{eqn:b-inf-sup-discrete} and 
\cref{eqn:saddle-discrete-penalty-1} then give
\begin{align*}
    \beta_X \|p_{X,\epsilon}\|_Q \leq \sup_{v \in \discrete{V}} 
    \frac{(\operator{D} v, p_{X, \epsilon})_Q}{\|v\|_{V}} 
    = \sup_{v \in \discrete{V}} \frac{a(\tilde{z}_{X, \epsilon}, 
        v)}{\|v\|_{V}} 
    &\leq M_a \|\tilde{z}_{X, \epsilon}\|_V 
    \leq \frac{M_a \Upsilon_X}{\beta_X} 
    \left( \frac{\epsilon_0}{\epsilon_0 - \epsilon}  \right) 
    \|G\|_{\dual{\discrete{Q}}},
\end{align*}
which corresponds to the value of $C_{X, \epsilon}^{[4]}$ in 
\cref{tab:saddle-perturb-cont-const}. \hfill \qedsymbol

\subsection{Proof of \cref{thm:saddle-discrete-continuity} when 
    $a(\cdot,\cdot)$ satisfies \ref{asmp:a-sym-nonneg-coercive-z}}
\label{sec:saddle-discrete-continuity-proof-a-sym-nonneg}	

We proceed as in \cref{sec:saddle-discrete-continuity-proof-a-gen}. \\
	
\noindent \textbf{Step 1: $\epsilon = 0$. } First consider the case 
$\epsilon = 0$. \\

\noindent \textbf{Part 1: $G = 0$. } Using 
\cref{eqn:a-nonnegative-coercive-kernel}, we may replace $\alpha_X$
with $\tilde{\alpha}_X$ in Step 1: Part 1 of 
\cref{sec:saddle-discrete-continuity-proof-a-gen}. The
values of $C_{X, 0}^{[i]}$, $i = 2,3$, in the 
\ref{asmp:a-sym-nonneg-coercive-z} column of
\cref{tab:saddle-perturb-cont-const} are the same as in the
general case. \\

\noindent \textbf{Part 2: $F = 0$. } Note that $C_{X, 0}^{[3]}$ 
in the \ref{asmp:a-sym-nonneg-coercive-z} column of 
\cref{tab:saddle-perturb-cont-const} is the same 
value in the general case, so we need only consider the value of 
$C_{X, 0}^{[4]}$. Inequality \cref{eqn:d-adjoint-inf-sup}, equation
\cref{eqn:saddle-discrete-1}, and inequality  
\cref{eqn:a-sym-nonneg-av-dual-by-a} give
\begin{align*}
    \beta_X \|p_X \|_Q = \|\operator{D}^{\star} p_X\|_{\dual{\discrete{V}}}
        = \| \operator{A} u_X \|_{\dual{\discrete{V}}} \leq \sqrt{M_a} 
        |u_X|_a.
\end{align*}
Since $u_X \in \tilde{\discrete{Z}}$ and $\operator{D} u_X = 
\operator{R}_{\discrete{Q}}^{-1} G$, inequality 
\cref{eqn:tilde-z-div-continuity} gives
$\|p_X\|_{Q} \leq M_a \beta_X^{-2} \|G\|_{\dual{\discrete{Q}}}$,
which corresponds to the value of $C_{X, 0}^{[4]}$ in the 
\ref{asmp:a-sym-nonneg-coercive-z} column of 
\cref{tab:saddle-perturb-cont-const}. \\

\noindent \textbf{Step 2: $\epsilon > 0$. } Now consider the case 
$\epsilon > 0$. \\

\noindent \textbf{Part 1: $G = 0$. } The value of $C_{X}^{[1]}$
follows from the same arguments as for $\epsilon = 0$. Since $\hat{z}_{X, 
\epsilon} = \tilde{z}_{X, \epsilon}$, we now have
\begin{align}
    \label{eqn:proof:sym-tildez-eqn}
    a_{\epsilon}(\tilde{z}_{X, \epsilon}, \tilde{z}_{X, \epsilon}) = 
    a_{\epsilon}(u_{X, 
        \epsilon}, 
    \tilde{z}_{X, \epsilon}) = F(\tilde{z}_{X, \epsilon}),
\end{align} 
and so \cref{eqn:tilde-z-aeps-coercivity-sym-nonneg-coercive-z} gives
\begin{align*}
    \| \tilde{z}_{X, \epsilon}\|_{V} \leq \frac{\epsilon 
        \Upsilon_X^2}{\beta_X^2} 
    \|F\|_{\dual{\tilde{\discrete{Z}}}} \quad \text{and} \quad 
    \|\operator{D} 
    \tilde{z}_{X, \epsilon}\|_{Q}^2 \leq \epsilon 
    \|F\|_{\dual{\tilde{\discrete{Z}}}} \| 
    \tilde{z}_{X, \epsilon}\|_{V} \leq \frac{\epsilon^2 
        \Upsilon_X^2}{\beta_X^2} 
    \|F\|_{\dual{\tilde{\discrete{Z}}}}^2.
\end{align*}
The values of $C_{X, \epsilon}^{[2]}$ and  $C_{X, \epsilon}^{[3]}$ in 
the \ref{asmp:a-sym-nonneg-coercive-z} column of
\cref{tab:saddle-perturb-cont-const} now follow since 
$p_{X, \epsilon} = \epsilon^{-1} \operator{D} \tilde{z}_{X, 
    \epsilon}$. \\

\noindent \textbf{Part 2: $F = 0$. } The same arguments as in the case 
$\epsilon = 0$ show that $z_{X, \epsilon} = 0$.	Choosing
$v = \tilde{z}_{X, \epsilon}$ in 
\cref{eqn:saddle-discrete-penalty-decoupled-1} gives
\begin{align*}
    |\tilde{z}_{X, \epsilon}|_{a_{\epsilon}}^2 = \epsilon^{-1} 
    G(\operator{D} 
    \tilde{z}_{X, \epsilon}) \leq \frac{1}{\sqrt{\epsilon}} 
    \|G\|_{\dual{\discrete{Q}}} 
    |\tilde{z}_{X, \epsilon}|_{a_{\epsilon}} \implies 
    |\tilde{z}_{X, \epsilon}|_{a_{\epsilon}}^2 \leq \frac{1}{\epsilon} 
    \|G\|_{\dual{\discrete{Q}}}^2,
\end{align*}
and so the value of $C_{X, \epsilon}^{[3]}$ in 
the \ref{asmp:a-sym-nonneg-coercive-z} column of
\cref{tab:saddle-perturb-cont-const} follows from 
\cref{eqn:tilde-z-aeps-coercivity-sym-nonneg-coercive-z}. 	
Now, choosing 
$v = u_{X, \epsilon} = \tilde{z}_{X, \epsilon}$ in 
\cref{eqn:saddle-discrete-penalty-1} and $q = p_{X, \epsilon}$ in 
\cref{eqn:saddle-discrete-penalty-2} and summing the two equations, 
we obtain
\begin{align}
    \label{eqn:proof:add-saddle-perturb}
    |\tilde{z}_{X, \epsilon}|_a^2 + \epsilon \|p_{X, \epsilon}\|_Q^2 = 
    -G(p_{X, \epsilon}).
\end{align}
Applying \cref{eqn:a-sym-nonneg-av-dual-by-a,eqn:d-adjoint-inf-sup} 
and using \cref{eqn:saddle-discrete-penalty-1} then gives
\begin{align*}
    |\tilde{z}_{X, \epsilon}|_a^2 \geq 
    \frac{1}{M_a} \| \operator{A} \tilde{z}_{X, 
        \epsilon}\|_{\dual{\discrete{V}}}^2 
    = \frac{1}{M_a} \| \operator{D}^{\star} p_{X, 
        \epsilon}\|_{\dual{\discrete{V}}}^2 
    \geq \frac{\beta_X^2}{M_a} \|p_{X, \epsilon}\|_{Q}^2,
\end{align*}
and so the value of $C_{X, \epsilon}^{[4]}$ in 
the \ref{asmp:a-sym-nonneg-coercive-z} column of
\cref{tab:saddle-perturb-cont-const} follows as
\begin{align*}
    \left( \frac{\beta_X^2}{M_a} + \epsilon \right) \|p_{X, 
        \epsilon}\|_{Q}^2 \leq -G(p_{X, \epsilon}) \implies \|p_{X, 
        \epsilon}\|_{Q} \leq \frac{M_a}{\epsilon M_a + \beta_X^2} 
    \|G\|_{\dual{\discrete{Q}}},
\end{align*}
which completes the proof assuming $a(\cdot,\cdot)$ satisfies 
\ref{asmp:a-sym-nonneg-coercive-z}. \hfill \qedsymbol

\subsection{Proof of \cref{thm:saddle-discrete-continuity} when 
    $a(\cdot,\cdot)$ satisfies \ref{asmp:a-coercive-all}}
\label{sec:saddle-discrete-continuity-proof-a-coercive-all}	

For $\epsilon = 0$, the values of $C_{X}^{[1]}$ and 
$C_{X, \epsilon}^{[i]}$, $i=2,\ldots,4$, are the
same as in the general case, replacing $\alpha_X$ with $\tilde{\alpha}_X$, 
and so we only consider the case $\epsilon > 0$. \\

\noindent \textbf{Step 1: $G = 0$. } The value of $C_X^{[1]}$
follows from the same arguments as for $\epsilon = 0$.
Let $\hat{z}_{X, \epsilon} \in \hat{\discrete{Z}}$ be as in 
\cref{sec:saddle-discrete-continuity-proof-a-gen}. Since $\tilde{z}_{X, 
    \epsilon} - \hat{z}_{X, \epsilon} \in \discrete{Z}$, 
\cref{eqn:interchange-tilde-hat-same-d} gives 
\begin{align*}
    |\hat{z}_{X, \epsilon}|_{a_{\epsilon}}^2
    = a_{\epsilon}(\tilde{z}_{X, \epsilon}, \hat{z}_{X, \epsilon})
    = a_{\epsilon}(u_{X, 
        \epsilon}, \hat{z}_{X, \epsilon}) = |F(\hat{z}_{X, \epsilon})|.
\end{align*} 
Applying \cref{eqn:tilde-z-aeps-coercivity-coercive-all-v} gives
\begin{align*}
    \left( \tilde{\alpha}_X + \frac{1}{\epsilon} \left( 
    \frac{\beta_X}{\Upsilon_X} \right)^2 \right) \|\hat{z}_{X, 
        \epsilon}\|_{V}^2 \leq \|F\|_{\dual{\hat{\discrete{Z}}}} 
    \|\hat{z}_{X, \epsilon}\|_{V} 
    \implies 
    \|\hat{z}_{X, \epsilon}\|_V \leq \frac{\epsilon 
        \Upsilon_X^2}{\epsilon \tilde{\alpha}_X \Upsilon_X^2 + \beta_X^2   
    } 
    \|F\|_{\dual{\hat{\discrete{Z}}}}
\end{align*}
Now, \cref{lem:v-z-tildez-decomp} gives $\|\hat{z}_{X, \epsilon}\|_V \leq 
\Upsilon_X \|\tilde{z}_{X, \epsilon}\|_V$, which gives
the value of $C_{X, \epsilon}^{[2]}$ in the \ref{asmp:a-coercive-all} column
in \cref{tab:saddle-perturb-cont-const}.
Applying \cref{eqn:tilde-z-aeps-coercivity-coercive-all-d} then gives
\begin{align*}
    \left( \frac{\tilde{\alpha}_X}{M_{\operator{D}}^2} + \frac{1}{\epsilon} 
    \right) \|\operator{D} \hat{z}_{X, \epsilon}\|_{Q}^2 &\leq 
    \|F\|_{\dual{\hat{\discrete{Z}}}} 
    \|\hat{z}_{X, \epsilon}\|_{V} 
    \implies 
    \|\operator{D} \hat{z}_{X, \epsilon}\|_{Q}^2 
    &\leq 
    \frac{\epsilon^2 \Upsilon_X^2 M_{\operator{D}}^2  }{ 
        (\epsilon \tilde{\alpha}_X \Upsilon_X^2 + \beta_X^2)
        (M_{\operator{D}}^2 + \epsilon \tilde{\alpha}_X) 
        }
    \|F\|_{\dual{\hat{\discrete{Z}}}}^2.
\end{align*}
Since $p_{X, \epsilon} = \epsilon^{-1} \operator{D} \tilde{z}_{X, \epsilon} 
= \epsilon^{-1} \operator{D} \hat{z}_{X, \epsilon}$, we obtain the value of
$C_{X, \epsilon}^{[3]}$ in the \ref{asmp:a-coercive-all} column
of \cref{tab:saddle-perturb-cont-const}. \\

\noindent \textbf{Step 2: $F = 0$. } From the general case, we have 
$z_{X, \epsilon} \equiv 0$.  As in 
\cref{sec:saddle-discrete-continuity-proof-a-sym-nonneg}, we have
\begin{align*}
    |\tilde{z}_{X, \epsilon}|_a^2 + \epsilon \|p_{X, \epsilon}\|_Q^2 = 
    -G(p_{X, \epsilon}).
\end{align*}
The inf-sup condition \cref{eqn:b-inf-sup-discrete}, 
\cref{eqn:saddle-discrete-penalty-1}, \cref{eqn:a-d-bounded}, and 
\cref{eqn:a-coercive-all} give
\begin{align*}
    \beta_X \|p_{X, \epsilon}\|_Q = \sup_{ v \in \discrete{V} } 
    \frac{(\operator{D} v, p_{X, \epsilon})_Q}{ \|v\|_V } 
    = \sup_{ v \in \discrete{V} } \frac{a(\tilde{z}_{X, \epsilon}, v)}{ 
        \|v\|_V } \leq M_a \|\tilde{z}_{X, \epsilon}\|_V \leq 
    \frac{M_a}{\sqrt{\tilde{\alpha}_X}} |\tilde{z}_{X, \epsilon}|_a. 
\end{align*}
Consequently, we obtain
\begin{align*}
    \left( \frac{\tilde{\alpha}_X \beta_X^2}{M_a^2} + \epsilon \right) 
    \|p_{X, \epsilon}\|_Q^2 \leq \|G\|_{\dual{\discrete{Q}}} \|p_{X, 
        \epsilon}\|_Q \implies \|p_{X, \epsilon}\|_{Q} \leq 
    \frac{M_a^2}{\epsilon M_a^2 + \tilde{\alpha}_X \beta_X^2} 
    \|G\|_{\dual{\discrete{Q}}},
\end{align*}
which corresponds to the value of $C_{X, \epsilon}^{[3]}$ in the 
\ref{asmp:a-coercive-all} column of \cref{tab:saddle-perturb-cont-const}
on noting that $\Upsilon_X = M_a / \tilde{\alpha}_X$.

Returning to \cref{eqn:saddle-discrete-penalty-1}, \cref{eqn:a-d-bounded} 
and \cref{eqn:a-coercive-all} give
\begin{align*}
    |\tilde{z}_{X, \epsilon}|_a^2 \leq M_{\operator{D}} \|p_{X, 
        \epsilon}\|_Q \|\tilde{z}_{X, \epsilon}\|_V \leq 
    \frac{M_{\operator{D}}}{\sqrt{\tilde{\alpha}_X}} \|p_{X, \epsilon}\|_Q 
    |\tilde{z}_{X, \epsilon}|_a \implies |\tilde{z}_{X, \epsilon}|_a^2 
    \leq \frac{M_{\operator{D}}^2 }{ \tilde{\alpha}_X } \|p_{X, 
        \epsilon}\|_Q^2,
\end{align*}
which gives the value of $C_{X, \epsilon}^{[4]}$ in the 
\ref{asmp:a-coercive-all} column of \cref{tab:saddle-perturb-cont-const}.
Consequently, we obtain
\begin{align*}
    \left( 1 + \frac{\epsilon \tilde{\alpha}_X}{M_{\operator{D}}^2} \right) 
    |\tilde{z}_{X, \epsilon}|_a^2 &\leq \|G\|_{\dual{\discrete{Q}}} 
    \|p_{X, \epsilon}\|_{Q} 
    \implies
    \|\tilde{z}_{X, \epsilon}\|_V &\leq 
    \frac{ \Upsilon_X M_{\operator{D}}}{\sqrt{
        (\epsilon M_a \Upsilon_X + \beta_X^2)
        (\epsilon \tilde{\alpha}_X + M_{\operator{D}}^2)} } 
    \|G\|_{\dual{\discrete{Q}}},  
\end{align*}
which corresponds to the value of $C_{X, \epsilon}^{[3]}$ in the
\ref{asmp:a-coercive-all} column of \cref{tab:saddle-perturb-cont-const}.
\hfill \qedsymbol

\subsection{Proof of \cref{thm:saddle-discrete-continuity} when 
		$a(\cdot,\cdot)$ satisfies \ref{asmp:a-sym-coercive-all}}
\label{sec:saddle-discrete-continuity-proof-a-sym-coercive-all}	

For $\epsilon = 0$, the values of $C_{X}^{[1]}$ and 
$C_{X, \epsilon}^{[i]}$, $i=2,\ldots,4$, are the
same as in the case $a(\cdot,\cdot)$ satisfies 
\ref{asmp:a-sym-nonneg-coercive-z}, and so we only consider the case 
$\epsilon > 0$. In this case, the only value not covered by 
\ref{asmp:a-sym-nonneg-coercive-z} or \ref{asmp:a-coercive-all} 
is $C_{X, \epsilon}^{[2]}$. Thus, we assume that $G = 0$.

Applying 
\cref{eqn:tilde-z-aeps-coercivity-sym-coercive-all-v-a} to 
\cref{eqn:proof:sym-tildez-eqn} gives
\begin{align*}
    | \tilde{z}_{X, \epsilon}|_{a} &\leq \frac{\epsilon M_a}{\epsilon 
        M_a + \beta_X^2} \sup_{\tilde{v} \in \tilde{\discrete{Z}}} 
    \frac{|F(\tilde{v})|}{|\tilde{v}|_a}
    \implies 
    \| \tilde{z}_{X, \epsilon}\|_{V} \leq \frac{\epsilon 
        \Upsilon_X^2}{\epsilon M_a + \beta_X^2} 
    \|F\|_{\dual{\tilde{\discrete{Z}}}},
\end{align*}
which corresponds to the
value of $C_{X,\epsilon}^{[2]}$ in the \ref{asmp:a-sym-coercive-all} column 
of \cref{tab:saddle-perturb-cont-const}. 	 \hfill \qedsymbol


\section*{Funding}
PEF was funded by 
the Engineering and Physical Sciences Research Council [grant number EP/W026163/1],
the Science and Technology Facilities Council [grant number UKRI/ST/B000495/1],
the Donatio Universitatis Carolinae Chair ``Mathematical modelling of 
multicomponent systems'', 
the UKRI Digital Research Infrastructure 
Programme through the Science and Technology Facilities Council's Computational 
Science Centre for Research Communities (CoSeC), and
the Swedish Research Council under grant no.~Z2021-06594 while in residence at 
Institut Mittag-Leffler in Djursholm, Sweden.
For the purpose of open access, the authors have applied a CC BY public 
copyright licence to any author accepted manuscript arising from this submission.
No new data were generated or analysed during this study.
MN was supported in part by the NSF, grant no.~DMS-2309425.
CP was supported in part by the NSF, grant no.~DMS-2201487,
the Mary Wheeler Fellowship from the Ridgway Scott Foundation,
and an appointment to the NRC Research
Associateship Program at the U.S. Naval Research Laboratory, 
administered by the Fellowships Office of the National Academies of Sciences, 
Engineering, and Medicine.
\ifarxiv
\else	
Distribution Statement A.  
Approved for public release: distribution is unlimited.
\fi
\bibliographystyle{abbrvnat}
\bibliography{bib.bib}

\end{document}